\newcommand{\mP}{\mathcal{P}}
\DeclareMathOperator{\Div}{div}
\DeclareMathOperator{\SRBGS}{SRBGS}
\DeclareMathOperator{\BD}{BD}
\newcommand{\symgrad}{\mathcal{E}}
\newcommand{\grad}{\nabla}
\numberwithin{equation}{section}
\theoremstyle{theorem}
\newtheorem{lemma}{Lemma}[section]
\newtheorem{theorem}{Theorem}[section]
\newtheorem{proposition}{Proposition}[section]
\newtheorem{corollary}{Corollary}[section]
\theoremstyle{remark}
\newtheorem{remark}{Remark}[section]
\theoremstyle{definition}
\DeclareMathOperator{\dom}{dom}
\DeclareMathOperator*{\wlim}{w-lim}
\DeclareMathOperator{\erg}{erg}
\DeclareMathOperator*{\argmin}{arg\,min}
\newcommand{\seq}[1]{\{{#1}\}}
\newcommand{\norm}[2][]{\|{#2}\|_{#1}}
\newcommand{\scp}[3][]{\langle{#2},{#3}\rangle_{#1}}
\newcommand{\subgrad}{\partial}
\newcommand{\RR}{\mathbf{R}}
\newcommand{\mM}{\mathcal{M}}
\newcommand{\mA}{\mathcal{A}}
\newcommand{\mL}{\mathcal{L}}
\newcommand{\mO}{\mathcal{O}}
\newcommand{\gap}{\mathfrak{G}}
\DeclareMathAlphabet{\mathbfit}{OML}{cmm}{b}{it}
\DeclareMathOperator{\TV}{TV}
\DeclareMathOperator{\TGV}{TGV}
\begin{document}
\title{Analysis of Fully Preconditioned ADMM with Relaxation in Hilbert Spaces with Applications to Imaging}
\author{Hongpeng Sun\thanks{Institute for Mathematical Sciences,
Renmin University of China, No.~59, Zhongguancun Street, Haidian District,
100872 Beijing, People's Republic of China.
Email: \href{mailto:hpsun@amss.ac.cn}{hpsun@amss.ac.cn}.}}
\date{}
%{Kristian Bredies\thanks{Institute for Mathematics and
%Scientific Computing, University of Graz, Heinrichstra\ss{}e 36, A-8010 Graz,
%Austria. Email: \href{mailto:kristian.bredies@uni-graz.at}{kristian.bredies@uni-graz.at}. The Institute for Mathematics and Scientific Computing is
%a member of NAWI Graz (\url{http://www.nawigraz.at/}).}

\maketitle

\begin{abstract}
Alternating direction method of multipliers (ADMM) is a powerful first order methods for various applications in inverse problems and imaging. However, there is no clear result on the weak convergence of ADMM in infinite dimensional Hilbert spaces with relaxation studied by Eckstein and Bertsakas \cite{EP}. In this paper, by employing a kind of ``partial" gap analysis, we prove the weak convergence of general preconditioned and relaxed ADMM in infinite dimensional Hilbert spaces, with preconditioning for solving all the involved implicit equations under mild conditions. We also give the corresponding ergodic convergence rates respecting to the ``partial" gap function. Furthermore, the connections between certain preconditioned and relaxed ADMM and the corresponding Douglas-Rachford splitting methods are also discussed, following the idea of Gabay in  \cite{DGBA}.
Numerical tests also show the efficiency of the proposed overrelaxation variants of preconditioned ADMM.
\end{abstract}

\paragraph{Key words.} Alternating direction method of multipliers, Douglas-Rachford splitting, relaxation, image reconstruction, linear preconditioners technique, weak convergence analysis
%
%\paragraph{AMS subject classifications.}
%65K10,
%%% 65K Mathematical programming, optimization and variational techniques
%%%%  65K10 Optimization and variational techniques
%49K35,
%% 49    Calculus of variations and optimal control; optimization
%%% 49K          Optimality conditions
%%%% 49K35  Minimax problems
%90C25,
%% 90 Operations research, mathematical programming
%%% 90C Mathematical programming
%%%% 90C25       Convex programming
%65F08.
%% 65-XX                         Numerical analysis
%%       65Fxx           Numerical linear algebra
%%               65F08           Preconditioners for iterative methods

\section{Introduction}
%

%\subsection{Classical ADMM method}
\date{} % Activate to display a given date or no date (if empty),
         % otherwise the current date is print
%We will give a brief introduction of ADMM (alternating direction method of multipliers) first.
ADMM (alternating direction method of multipliers) is one of the most popular first order methods for non-smooth optimization and regularization problems widely used in inverse problems \cite{Boyd, CP3}.  ADMM was first introduced in \cite{Gabay, FM}, and extensive studies can be found in~\cite{Boyd, CP3, DDY, DY, EP, FBX, GQY, HY, LST, MS, XMS} and so on. It aims at the following general constrained optimization problem,
 \begin{equation}\label{saddle:primal:original}
 \min_{u \in X, p \in Y} F(u) + G(p), \quad \text{subject to} \quad Au +B p = c,
 \end{equation}
where $F \in \Gamma_0(X)$, $G \in \Gamma_0(Y)$, $c \in Z$ and $X, Y, Z$ are real Hilbert spaces with $\Gamma_0(\cdot)$ denoting the corresponding proper, convex and lower semi-continuous function spaces~\cite{HBPL}.  In the sequel, we assume $A \in L(X,Z)$, $B \in L(Y,Z)$, where $L(X_1,X_2)$ denotes the spaces of linear and continuous operators mapping from Hilbert spaces $X_1$ to $X_2$. With appropriate conditions \cite{BS1,KK}, this problem is equivalent to the following saddle-point problem, and we assume there exists a bounded saddle-point,
 \begin{equation}\label{eq:saddle-point-prob}
 \min_{u \in X, p \in Y} \max_{\bar \lambda \in Z} \mL(u,p, \bar \lambda), \quad \mL(u,p, \bar \lambda):= F(u) + G(p) +  \langle \bar \lambda, Au+Bp-c \rangle,
 \end{equation}
 where $\mL(u,p, \bar \lambda)$ is the Lagrangian function and $\bar \lambda$ is the Lagrangian multiplier. The
associated Fenchel-Rockafellar dual problem of \eqref{saddle:primal:original} reads as follows \cite{DY},
 \begin{equation}\label{eq:saddle-point-prob:dual}
 \quad \max_{ \bar \lambda\in Z} -F^*(-A^* \bar \lambda)- G^*(-B^* \bar \lambda) - \langle c, \bar \lambda \rangle.
 \end{equation}
Once writing the following augmented Lagrangian,
 \begin{equation}\label{eq:augmented:lag}
  L_{r}(u, p, \bar \lambda,c): = F(u) +  G(p) + \langle \bar{\lambda},  Au+Bp-c\rangle + \frac{r}{2}\| Au + Bp-c \|^2,
\end{equation}
 one gets the ADMM method by solving the sub minimization problems $u$ and $p$ consecutively in \eqref{eq:augmented:lag}, with updating the Lagrangian multipliers at last,
\begin{equation}
    \label{admm:original:sys:final}\tag{ADMM}
    \left\{
    \begin{aligned}
       u^{k+1}
      &= (rA^*A + \partial F)^{-1}[A^*(-rBp^k +rc - \bar{\lambda}^k) ],\\
p^{k+1}  &= (r B^*B + \partial G)^{-1}[B^*( -rAu^{k+1} - \bar{\lambda}^{k} +rc)],
      \\
      \bar{\lambda}^{k+1} &= \bar{\lambda}^{k} + r(Au^{k+1} + Bp^{k+1}-c). \\
      % \label{PADMM:lambda}
    \end{aligned}
    \right.
  \end{equation}
Henceforth, we assume
\begin{equation}\label{eq:assumption:admm}
  rA^*A + \partial F, \quad rB^*B + \partial G \ \ \text{are strongly maximal monotone},
\end{equation}
such that $  (rA^*A + \partial F)^{-1}$ and $( rB^*B + \partial G)^{-1} $ exist and are Lipschitz continuous.

The convergence of \eqref{admm:original:sys:final} method in finite dimensional spaces is clear; see \cite{DY, HY, LST, MS} and \cite{EP, SM}. However, the weak convergence is not quite clear in infinite dimensional Hilbert space except \cite{BS1}. It is also well known that one can get ADMM through applying Douglas-Rachford splitting method to the dual problems \eqref{eq:saddle-point-prob:dual} while $B=-I$, $c=0$ \cite{DGBA}, and there are some weak convergence results of Douglas-Rachford splitting
method in infinite dimensional Hilbert spaces; see \cite{COM, EP} and the recent paper \cite{BFS}.
However, one can check that the weak convergence of Douglas-Rachford splitting method in \cite{BFS} could not lead to weak convergence of the corresponding ADMM directly. The weak convergence of preconditioned ADMM is analysed in \cite{BS1} in infinite dimensional Hilbert spaces, under the framework of general proximal point iterations with moderate conditions. Here we extend the results to overrelaxed variants through a different approach.

Actually, ADMM can be seen as a special case of the following ``fully" relaxed and preconditioned version of \eqref{admm:original:sys:final},
\begin{equation}
    \label{admm:relax:sys:full:final}\tag{rpFADMM}
    \left\{
    \begin{aligned}
       u^{k+1}
      &= (N + \partial F)^{-1}[A^*(-rBp^k +rc - \bar{\lambda}^k)+(N-rA^*A)u^k],\\
p^{k+1}  &= (M + \partial G)^{-1}[B^*( -r \rho_{k} Au^{k+1} + r(1-\rho_{k})Bp^{k} - \bar{\lambda}^{k} +r\rho_{k}c) + (M-rB^*B)p^k],
      \\
      \bar{\lambda}^{k+1} &= \bar{\lambda}^{k} + r(\rho_{k} Au^{k+1} + Bp^{k+1}-(1-\rho_{k})Bp^{k}-\rho_{k}c),
      % \label{PADMM:lambda}
    \end{aligned}
    \right.
  \end{equation}
  with assumptions on relaxation parameters $\rho_{k}$ throughout this paper,
  \begin{equation}\label{eq:assum:relaxation}
  \rho_{k} \in (0,2), \quad \text{non-decreasing, and} \ \ \lim_{k\rightarrow\infty} \rho_k = \rho^* <2,
  \end{equation}
together with $N \in L(X)$, $M \in L(Y)$ are both self-adjoint and positive definite operators, i.e.,
\begin{equation}\label{eq:assume:M:N}
N - rA^*A \geq 0, \quad M - rB^*B \geq 0.
\end{equation}
We would like to point out that preconditioning for the dual variable $p$ is also necessary in some cases; see the multiplier-based max-flow algorithm \cite{YBT}.
Considering the most simple example, while $\partial G$ and $\partial F$ are linear operators, the updates of $u^{k+1}$ and $p^{k+1}$ in \eqref{admm:relax:sys:full:final} could be reformulated as follows,
\begin{equation*}
    \left\{
    \begin{aligned}
       u^{k+1}
      &= u^{k} + (N + \partial F)^{-1}[A^*(-rBp^k +rc - \bar{\lambda}^k)  - (\partial F+rA^*A)u^k],\\
p^{k+1}  &= p^{k} + (M + \partial G)^{-1}[B^*( -r \rho_{k} Au^{k+1} + r(1-\rho_{k})Bp^{k} - \bar{\lambda}^{k} +r\rho_{k}c) - (\partial G+ rB^*B)p^k],
    \end{aligned}
    \right.
  \end{equation*}
where $(N + \partial F)$ and $(M + \partial G)$ can be seen as generalized ``preconditioners" for the operator equations when calculating $u^{k+1}$ and $p^{k+1}$ in classical \eqref{admm:original:sys:final} with relaxation. It is shown in \cite{BS1} that one could benefit from efficient preconditioners for solving the implicit problems approximately with only one, two, or three cheap preconditioned iterations without controlling the errors, to guarantee the (weak) convergence of the \eqref{admm:original:sys:final} iterations. It is not surprising that while $F$ (or $G$) is a quadratical form \cite{BS1}, the update of $u^{k}$ (or $p^{k}$) above is just the classical preconditioned iteration in numerical linear algebra in finite dimensional spaces, where various efficient preconditioning techniques are widely used. Even for nonlinear $F$ or $G$, there are still some preconditioning techniques available, see \cite{CP1} for diagonal preconditioning, and \cite{SUNDE} for nonlinear symmetric Gauss-Seidel preconditioning.

By direct calculation, it can be shown that \eqref{admm:relax:sys:full:final} is equivalent to the following iterations involving the augmented Lagrangian \eqref{eq:augmented:lag},
\begin{equation}
    \label{admm:relax:sys:full:final:augform}
    \left\{
    \begin{aligned}
      u^{k+1} &:= \argmin_{u \in X} \ L_{r}(u, p^{k}, \bar \lambda^{k},c) + \frac{1}{2} \|u-u^k\|_{N-rA^*A}^2,
      %\label{update:u:admm}
      \\
      p^{k+1} &:= \argmin_{p \in Y} \ L_{r}(\rho_k u^{k+1}, p -(1-\rho_k)p^{k}, \bar \lambda^{k}, \rho_k c) + \frac{1}{2} \|p-p^k\|_{M-rB^*B}^2,
      %\label{update:p:admm}
      \\
    \bar{\lambda}^{k+1} &:= \bar{\lambda}^{k} + r(\rho_{k} Au^{k+1} + Bp^{k+1}-(1-\rho_{k})Bp^{k}-\rho_{k}c).
    \end{aligned}
    \right.
  \end{equation}
Here $\frac{1}{2} \|u-u^k\|_{N-rA^*A}^2$ and $\frac{1}{2} \|p-p^k\|_{M-rB^*B}^2$ are weighted norm, i.e., $\|u-u^k\|_{N-rA^*A}^2 = \langle u-u^k, (N-rA^*A)(u-u^k) \rangle $, and they are called as proximal terms as in \cite{AS, DY, LST}. \eqref{admm:original:sys:final} could also be recovered from \eqref{admm:relax:sys:full:final:augform} by setting $\rho_k \equiv 1$, $N-rA^*A =0$ and $M-rB^*B=0$.

Let us define the ``partial" primal-dual gap respecting to a point $x$ first, with notation $x = (u,p,\bar \lambda)$ and $x^{k+1} = (u^{k+1},p^{k+1},\bar \lambda^{k+1})$,
\begin{equation}\label{eq:partial:gap:def}
\begin{aligned}
  \gap_{x}(x^{k+1}) &= \gap_{(u, p, \bar \lambda)}(u^{k+1}, p^{k+1}, \bar \lambda^{k+1}): = \mL(u^{k+1}, p^{k+1}, \bar \lambda) - \mL(u, p, \bar \lambda^{k+1}), \\
   & = F(u^{k+1}) + G(p^{k+1}) + \langle \bar  \lambda, Au^{k+1} + Bp^{k+1}-c \rangle  \\
& \quad -[F(u) + G(p) + \langle \bar \lambda^{k+1}  ,  Au +Bp-c\rangle ].
\end{aligned}
\end{equation}
In the sequel, we always assume that $(u,p) \in \dom F \times \dom G$, such that $\mL(u, p, \bar \lambda^{k+1})$ and $\mL(u^{k+1}, p^{k+1}, \bar \lambda)$ are finite, with $\dom F$ (or $\dom G$) defined as
$\dom F:= \{x \in X: F(x) < \infty\}$ (or $\dom G:= \{p \in Y: G(p) < \infty\}$). This assumption can guarantee that the boundedness of $\gap_{x}(x^{k+1})$ while $(u^{k+1},p^{k+1}) \in \dom F \times \dom G$. Similar but different gap function could be found in \cite{UCJ}. Our ``partial" gap function has more complicate structure than that used in \cite{BS3, CP2}.

%
%we prove the weak convergence of the fully preconditioned and relaxed  \eqref{admm:relax:sys:full:final} in infinite dimensional Hilbert spaces, together with the corresponding ergodic convergence rate, under only conditions \eqref{eq:assumption:admm}, \eqref{eq:assum:relaxation} and \eqref{eq:assume:M:N},

The main contributions belong to the following parts.
First, motivated by the ``partial" primal-dual gap analysis in \cite{BS3} and \cite{CP2}, we prove weak convergence of relaxed \eqref{admm:original:sys:final} in \cite{EP} in infinite dimensional Hilbert space with conditions \eqref{eq:assumption:admm}, \eqref{eq:assum:relaxation} and \eqref{eq:assume:M:N}, by employing the detailed analysis of ``partial" primal-dual gap \eqref{eq:partial:gap:def}.
No additional condition is needed as in \cite{AS} where $N-rA^*A= I$, $M-rB^*B = I$ and $\rho_k \equiv 1$.
 This kind of relaxation is different from \cite{DY, FG, LST} that are focused on relaxation for the updates of Lagrangian multipliers. Second, when $M=rB^*B$ in \eqref{admm:relax:sys:full:final}, while $A$, $B$ and $G$ have similar separable structures, we proposed the scheme that distributing different relaxation parameters for updating different components of $p$ and $\bar \lambda$, which is quite necessary for compact, relaxed and preconditioned ADMM. We also studied the relationship between the relaxed ADMM with preconditioning and the corresponding Douglas-Rachford splitting method which is an extension of \cite{DGBA}. It could help study the properties of the Lagrangian multipliers from the dual formulation.
 The case that $N-rA^*A >0$ and $M=rB^*B$ in \eqref{admm:relax:sys:full:final:augform}, is discussed in \cite{FBX} in finite dimensional spaces. However the positive definiteness of $N-rA^*A$ could prevent lots of interesting applications including the efficient symmetric (block) Red-Black Gauss-Seidel preconditioner for TV (or TGV) denosing problems, where $N-rA^*A$ is not positive definite, see \cite{BS, BS1, BS2}. Similar linearized ADMM with overrelaxation is also considered in \cite{NJ} while $M = rB^*B$, where estimating the largest eigenvalue of $A^*A$ or $B^*B$ is necessary. Third, we prove the weak convergence of ``fully" preconditioned ADMM with relaxation \eqref{admm:relax:sys:full:final} and give the corresponding ergodic convergence rate. To the best knowledge of the author, the weak convergence and ergodic convergence rate of \eqref{admm:relax:sys:full:final} seem to be figured clear for the first time in infinite dimensional Hilbert spaces.

The paper is organized as follows, we first consider the weak convergence and ergodic convergence rate of the relaxation variant of \eqref{admm:original:sys:final} in Section \ref{sec:relax:admm} by employing the ``partial" gap analysis. In Section \ref{sec:pre:admm:relax}, by applying the results for relaxed ADMM in Section \ref{sec:relax:admm} to a modified constrained problem, we get a kind of preconditioned ADMM and its relaxation formally, with only preconditioning for the update of $u^{k+1}$. Furthermore, we present the relation to Douglas-Rachford splitting method, and it turns out that the primal-dual gap function \cite{CP} is useful for ADMM while applying to ROF denoising model. In Section \ref{sec:full:pre:admm}, we discuss the weak convergence and the corresponding ergodic convergence rate of \eqref{admm:relax:sys:full:final}, with adjusted ``partial" gap analysis and the analysis in the previous sections. In the last section, some numerical tests are conducted to demonstrate efficiency of the proposed schemes in this paper.

\section{Relaxation variants of ADMM}\label{sec:relax:admm}
%\subsection{Overrelaxation}
Here, we will consider the following relaxed ADMM method, with assumptions on $\rho_k$ as \eqref{eq:assum:relaxation},
\begin{equation}
    \label{admm:relax:sys:final}\tag{rADMM}
    \left\{
    \begin{aligned}
       u^{k+1}
      &= (rA^*A + \partial F)^{-1}[A^*(-rBp^k +rc - \bar{\lambda}^k)],\\
p^{k+1}  &= (r B^*B + \partial G)^{-1}[B^*( -r \rho_{k} Au^{k+1} + r(1-\rho_{k})Bp^{k} - \bar{\lambda}^{k} +r\rho_{k}c)],
      \\
      \bar{\lambda}^{k+1} &= \bar{\lambda}^{k} + r(\rho_{k} Au^{k+1} + Bp^{k+1}-(1-\rho_{k})Bp^{k}-\rho_{k}c). \\
      % \label{PADMM:lambda}
    \end{aligned}
    \right.
  \end{equation}
%Considering the following relaxed DR \eqref{pre:right:rela} and the corresponding relaxed ADMM \eqref{pre:admm:dr:relax} {\color{red}while $N = rK^*K$}. In order to use the estimate \eqref{eq:gap:estimate}, we need to rewrite the update from $(u^{k}, p^{k},\bar \lambda^{k})$ to $(u^{k+1}, p^{k+1}, \bar \lambda^{k+1})$ in the same way as the original ADMM \eqref{admm:original:sys:final}.
We will estimate the ``partial" primal-dual gap, and give a derivation from the iteration \eqref{admm:relax:sys:final} directly. We denote
  \begin{equation}\label{eq:def:po:lambdao}
  \bar \lambda_{o}^{k+1} := \frac{\bar \lambda^{k+1} - \bar \lambda^{k}}{\rho_{k}} + \bar \lambda^{k}, \quad  p_{o}^{k+1} := \frac{p^{k+1} - p^{k}}{\rho_{k}} + p^{k}.
  \end{equation}
 % Then the update of $\bar \lambda^{k+1}$ in \eqref{admm:relax:sys:final} becomes
%  \begin{equation}\label{eq:update:lambda:rewrite:form}
%    \bar \lambda_{o}^{k+1} = \bar \lambda^{k} + r[Au^{k+1} +B p_{o}^{k+1}-c].
%  \end{equation}
%  And the update of $p^{k+1}$ of \eqref{pre:admm:dr:relax} actually means
%  \begin{equation}\label{eq:p:update:relax}
%  \bar \lambda^{k+1} \in \partial G(p^{k+1}), \quad \text{which is equivalent to}  \quad p^{k+1} \in \partial G^{*}(\bar \lambda^{k+1}).
%  \end{equation}
%  {\color{red}If
%  \begin{equation}\label{eq:po:aasumption}
%   p_{o}^{k+1} \in \partial G^{*}(\bar \lambda_{o}^{k+1}),
%  \end{equation}
%  }
%  by \eqref{eq:update:po:rewrite:form}, we have
%\[
%  \bar \lambda^{k} + rKu^{k+1} =     \bar \lambda_{o}^{k+1}  + r p_{o}^{k+1} \in (I + r \partial G)(p_{o}^{k+1}).
%\]
By direct calculation,  the last update of \eqref{admm:relax:sys:final} can be reformulated as follows,
  \begin{equation}\label{pre:admm:drway:classical:rewrite}
   \bar \lambda_{o}^{k+1} = \bar \lambda^{k} + r[Au^{k+1} +B p_{o}^{k+1}-c].
\end{equation}
%We also have $\bar \lambda^{k} \in \partial G(p^{k})$, but it seems not possible to get the equivalence between \eqref{pre:admm:dr:relax}  and \eqref{pre:admm:drway:classical:rewrite} with conditions
% \eqref{eq:po:aasumption} and \eqref{eq:def:po:lambdao} together while $N = rK^*K$, since $\partial G$ or $\partial G^*$ is not linear generally. Actually assumption \eqref{eq:po:aasumption} could not be guaranteed, and this kind of arguments do not work.
We introduce the following adjacent variables for convenience throughout the paper, for $k \in \mathbb{N}$,
\begin{equation}\label{def:v:w}
\begin{aligned}
&v^{k} := -rBp^{k} + \bar \lambda^{k}, \quad v_{o}^{k+1} := -rBp_{o}^{k+1} + \bar \lambda_{o}^{k+1}, \quad v := -rBp + \bar \lambda, \\
& w^{k} := -rBp^{k} - \bar \lambda^{k}, \quad w_{o}^{k+1} := -rBp_{o}^{k+1} - \bar \lambda_{o}^{k+1}, \quad w: = -rBp - \bar \lambda.
\end{aligned}
\end{equation}
%\quad v^{k+1} = -rBp^{k+1} + \bar \lambda^{k+1},\quad w^{k+1} = -rBp^{k+1} - \bar \lambda^{k+1},
\begin{theorem}\label{thm:gap:estimate:pre:u}
With $v$, $w$ defined in \eqref{def:v:w}, and assuming $Au + Bp  = c$, for the primal-dual gap of the relaxed ADMM \eqref{admm:relax:sys:final}, we have, for $k \geq 0$, while $0< \rho_k <1$,
\begin{equation} \label{eq:gap:estimate:relaxed:rho1}
   \begin{aligned}
  \gap_{x}(x^{k+1})  & \leq
\rho_{k} r \langle B(p^{k} - p_{o}^{k+1}), B(p_{o}^{k+1} - p) \rangle +  \frac{\rho_{k}}{r}\langle \bar \lambda^{k} -\bar \lambda_{o}^{k+1}, \bar \lambda_{o}^{k+1} - \bar \lambda \rangle  \\
       &+ \frac{1-\rho_{k}}{r}\langle v^{k} -v_{o}^{k+1}, v_{o}^{k+1} -v \rangle + \rho_{k}(2-\rho_{k})\langle B(p^{k} - p_{o}^{k+1}), \bar \lambda^{k} -\bar \lambda_{o}^{k+1}\rangle,
  \end{aligned}
\end{equation}
and while $1 \leq \rho_k <2$, we have
%\begin{equation}
   \begin{align}
  \gap_{x }(x^{k+1})  & \leq
(2-\rho_{k}) r \langle B(p^{k} - p_{o}^{k+1}), B(p_{o}^{k+1} - p) \rangle +  \frac{2-\rho_{k}}{r}\langle \bar \lambda^{k} -\bar \lambda_{o}^{k+1}, \bar \lambda_{o}^{k+1} - \bar \lambda \rangle  \label{eq:gap:estimate:relaxed:rho2} \\
       &+\frac{\rho_{k}-1}{r} \langle w^{k} -w_{o}^{k+1}, w_{o}^{k+1} -w \rangle + \rho_{k}(2-\rho_{k})\langle B(p^{k} - p_{o}^{k+1}), \bar \lambda^{k} -\bar \lambda_{o}^{k+1}\rangle. \notag
  \end{align}
%\end{equation}
\end{theorem}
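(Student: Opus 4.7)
The plan is to derive the gap bound from the subgradient inclusions of the two subproblems, reduce the gap to a single line, substitute the auxiliary variables, and then present the resulting expression in two equivalent forms via the polarisations of $v$ and $w$. First I would read off the inclusions from \eqref{admm:relax:sys:final}: the $u$-step gives $-A^*\tilde\lambda^k \in \partial F(u^{k+1})$ with the intermediate multiplier $\tilde\lambda^k := \bar\lambda^k + r(Au^{k+1} + Bp^k - c)$, and combining the $p$-step with the $\bar\lambda$-step gives $-B^*\bar\lambda^{k+1} \in \partial G(p^{k+1})$. Convexity of $F, G$ together with the feasibility $Au + Bp = c$ then collapses the definition of the gap to
\begin{equation*}
\gap_x(x^{k+1}) \le \langle \tilde\lambda^k - \bar\lambda, A(u-u^{k+1})\rangle + \langle \bar\lambda^{k+1} - \bar\lambda, B(p - p^{k+1})\rangle,
\end{equation*}
which is the only place where an inequality is introduced.

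Next I would eliminate $u^{k+1}, \tilde\lambda^k, p^{k+1}, \bar\lambda^{k+1}$ in favour of $p_o^{k+1}, \bar\lambda_o^{k+1}$. From \eqref{pre:admm:drway:classical:rewrite} and $Au = c-Bp$ one has $A(u - u^{k+1}) = B(p_o^{k+1}-p) + \tfrac{1}{r}(\bar\lambda^k - \bar\lambda_o^{k+1})$ and $\tilde\lambda^k - \bar\lambda = (\bar\lambda_o^{k+1}-\bar\lambda) + rB(p^k - p_o^{k+1})$, while the definitions in \eqref{eq:def:po:lambdao} give $B(p-p^{k+1}) = -B(p_o^{k+1}-p) - (1-\rho_k)B(p^k-p_o^{k+1})$ and $\bar\lambda^{k+1}-\bar\lambda = (\bar\lambda_o^{k+1}-\bar\lambda) + (1-\rho_k)(\bar\lambda^k-\bar\lambda_o^{k+1})$. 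Expanding the two inner products and using the identity $1-(1-\rho_k)^2 = \rho_k(2-\rho_k)$ yields, after elementary rearrangement, the master estimate
\begin{align*}
\gap_x(x^{k+1}) \le \,& r\langle B(p^k - p_o^{k+1}), B(p_o^{k+1}-p)\rangle + \tfrac{1}{r}\langle \bar\lambda^k - \bar\lambda_o^{k+1}, \bar\lambda_o^{k+1} - \bar\lambda\rangle \\
& -(1-\rho_k)\bigl[\langle \bar\lambda_o^{k+1} - \bar\lambda, B(p^k - p_o^{k+1})\rangle + \langle \bar\lambda^k - \bar\lambda_o^{k+1}, B(p_o^{k+1} - p)\rangle\bigr] \\
& + \rho_k(2-\rho_k)\langle B(p^k - p_o^{k+1}), \bar\lambda^k - \bar\lambda_o^{k+1}\rangle.
\end{align*}

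Finally, I would repackage the master estimate through the polarisations associated with the combined variables in \eqref{def:v:w}. A direct calculation gives
\begin{equation*}
\tfrac{1}{r}\langle v^k - v_o^{k+1}, v_o^{k+1} - v\rangle = r\langle B(p^k - p_o^{k+1}), B(p_o^{k+1}-p)\rangle + \tfrac{1}{r}\langle \bar\lambda^k - \bar\lambda_o^{k+1}, \bar\lambda_o^{k+1}-\bar\lambda\rangle - \mathcal{C},
\end{equation*}
where $\mathcal{C} := \langle \bar\lambda_o^{k+1} - \bar\lambda, B(p^k - p_o^{k+1})\rangle + \langle \bar\lambda^k - \bar\lambda_o^{k+1}, B(p_o^{k+1} - p)\rangle$, and the analogous identity for $w$ carries $+\mathcal{C}$ in place of $-\mathcal{C}$. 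To obtain \eqref{eq:gap:estimate:relaxed:rho1} when $0 < \rho_k < 1$, I split $1 = \rho_k + (1-\rho_k)$ in front of the two ``pure'' terms and absorb the $(1-\rho_k)$-copies together with the bracket $-(1-\rho_k)\mathcal{C}$ of the master estimate into $\tfrac{1-\rho_k}{r}\langle v^k - v_o^{k+1}, v_o^{k+1}-v\rangle$. For \eqref{eq:gap:estimate:relaxed:rho2} when $1 \le \rho_k < 2$, the split $1 = (2-\rho_k) + (\rho_k-1)$ combined with the $w$-polarisation does the same job; the plus sign on the cross terms in the $w$-polarisation compensates for the sign flip introduced by $(\rho_k - 1) \ge 0$, which is precisely why $w$ rather than $v$ is the right combined variable in this regime.

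The main obstacle will be the bookkeeping in the expansion step: the four inner products produce six cross-type terms carrying weights $1$, $1-\rho_k$, and $(1-\rho_k)^2$, and one has to verify that after the collapse $1-(1-\rho_k)^2 = \rho_k(2-\rho_k)$ these terms reorganise into exactly the structure of the master estimate, with coefficient $\rho_k(2-\rho_k)$ on the ``symmetric'' residual $\langle B(p^k - p_o^{k+1}), \bar\lambda^k - \bar\lambda_o^{k+1}\rangle$. Once this is done, the two displayed bounds follow by routine arithmetic.
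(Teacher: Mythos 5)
Your proposal is correct and follows essentially the same route as the paper's proof: the same two subgradient inclusions (the paper writes your $-A^*\tilde\lambda^k$ as $A^*(-\bar\lambda_o^{k+1}+rBp_o^{k+1}-rBp^k)$ via \eqref{pre:admm:drway:classical:rewrite}), the same master estimate (the paper's \eqref{eq:gap:estimate:demtalil}), the same $v$/$w$ polarisation identities \eqref{eq:expan:v}--\eqref{eq:expan:w}, and the same absorption of the $(1-\rho_k)$-weighted cross terms. The only difference is presentational: you collapse the gap to a single two-inner-product bound before substituting, whereas the paper organises the same computation into the terms I and II.
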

\begin{proof}
%Similar to the proof of Theorem \ref{thm:gap:estimate},
By the update of $u^{k+1}$ in \eqref{admm:relax:sys:final}, we see
\[
A^*(-rBp^k +rc - \bar{\lambda}^k) -rA^*Au^{k+1} \in \partial F(u^{k+1}).
\]
Furthermore, by the update of $\bar \lambda^{k+1}$ in \eqref{admm:relax:sys:final}, we see
\[
-\frac{\bar \lambda^{k+1} - \bar \lambda^{k}}{\rho_{k}} - \bar \lambda^{k} =
-rBp^{k} - rAu^{k+1} +rc -\bar \lambda^{k} - \frac{rB(p^{k+1}-p^{k})}{\rho_{k}},
\]
and what follows is
\[
-\bar \lambda_{o}^{k+1} +rB p_{o}^{k+1} - rBp^{k}  = -rBp^{k} - rAu^{k+1} +rc -\bar \lambda^{k} .
\]
Thus we have
\begin{equation}\label{eq:relax:gap:u}
\langle -\bar \lambda_{o}^{k+1} +rB p_{o}^{k+1} - rBp^{k}, A(u^{k+1}-u) \rangle \in \langle \partial F(u^{k+1}), u^{k+1}-u\rangle.
\end{equation}
By the update of $p^{k+1}$ in \eqref{admm:relax:sys:final}, we see
\[
B^*( -r \rho_{k} Au^{k+1} + r(1-\rho_{k})Bp^{k} - \bar{\lambda}^{k} +r\rho_{k}c) - rB^*Bp^{k+1} \in \partial G(p^{k+1}).
\]
Again by the update of $\bar \lambda^{k+1}$ of \eqref{admm:relax:sys:final}, we have
\begin{equation}\label{eq:relax:p:subgradient}
-r \rho_{k} Au^{k+1} + r(1-\rho_{k})Bp^{k} - \bar{\lambda}^{k} +r\rho_{k}c -r Bp^{k+1} = -\bar \lambda^{k+1} \Rightarrow -B^* \bar \lambda^{k+1} \in \partial G(p^{k+1}),
\end{equation}
and what follows is
\begin{equation}\label{eq:relax:gap:p}
\langle-B^* \bar \lambda^{k+1}, p^{k+1}  - p \rangle \in \langle \partial G(p^{k+1}), p^{k+1}  - p \rangle.
\end{equation}
By the definition of subgradient, we have
\begin{align}
& F(u^{k+1}) - F(u) \leq \langle \zeta^{k+1}, u^{k+1} -u \rangle, \quad \forall \zeta^{k+1} \in \partial F(u^{k+1}), \label{eq:sub:gra:f} \\
& G(p^{k+1}) - G(p) \leq  \langle  \eta^{k+1}, p^{k+1} -p \rangle,  \quad \forall \eta^{k+1} \in \partial G(p^{k+1}). \label{eq:sub:gra:g}
\end{align}
With \eqref{eq:relax:gap:u}, \eqref{eq:relax:gap:p}, \eqref{eq:sub:gra:f}, \eqref{eq:sub:gra:g} and the assumption $Au+Bp=c$, for the right hand side of \eqref{eq:partial:gap:def}, we see
%By \eqref{eq:update:po:rewrite:form}, we have
%\[
%\bar \lambda^{k} + rKu^{k+1}  = \bar \lambda_{o}^{k+1} + rp_{o}^{k+1}.
%\]
%Then rewrite equation \eqref{eq:gap:estimate:pri }
%as
\[
  \gap_{x}(x^{k+1})  \leq \text{I} + \text{II}.
\]
Here $\text{I}$ and $\text{II}$ are as follows,
\begin{equation}\label{eq:I}
\text{I} = \langle -rBp^{k} - \bar \lambda_{o}^{k+1} +rBp_{o}^{k+1}, A(u^{k+1} - u) \rangle + \langle -\bar \lambda_{o}^{k+1}, B(p_o^{k+1} - p) \rangle + \langle \bar \lambda, Au^{k+1}+Bp_o^{k+1}-c \rangle,
\end{equation}
\begin{align*}\label{eq:II}
\text{II} &= \langle -B^*\bar  \lambda^{k+1}, p^{k+1}-p \rangle + \langle  \bar \lambda, Au^{k+1}+Bp^{k+1}-c \rangle \\
&\quad + \langle \bar \lambda_o^{k+1}, B(p_o^{k+1}-p) \rangle  - \langle \bar \lambda, Au^{k+1}+Bp_o^{k+1}-c \rangle \\
& =    \langle \bar  \lambda^{k+1}, -B(p^{k+1}-p) \rangle + \langle \bar \lambda_o^{k+1}, B(p_o^{k+1}-p) \rangle+ \langle \bar \lambda, B(p^{k+1}-p_o^{k+1}) \rangle \\
&  =\langle \bar  \lambda^{k+1}, -B(p^{k+1}-p) \rangle + \langle \bar \lambda_o^{k+1}, B(p_o^{k+1}-p) \rangle
+ \langle \bar \lambda, B(p^{k+1}-p)+B(p-p_o^{k+1}) \rangle, \\
& = \langle -B(p^{k+1}-p), \bar \lambda^{k+1} - \bar \lambda  \rangle + \langle  B(p_o^{k+1}-p), \bar \lambda_o^{k+1} - \bar \lambda \rangle.
\end{align*}
By \eqref{pre:admm:drway:classical:rewrite}, we see,
\[
Au^{k+1} + Bp_{o}^{k+1} -c = (\bar \lambda_o^{k+1} - \bar \lambda^{k})/{r},
\]
and together with the assumption $Au+Bp-c = 0$, we have
\[
A(u^{k+1}-u) = (\bar \lambda_o^{k+1} - \bar \lambda^{k})/{r} - B(p_o^{k+1}-p).
\]
Substituting these two equalities into I, we have
\begin{equation}\label{eq:ref:I}
\text{I} = \frac{1}{r}\langle  \bar \lambda^{k} -\bar \lambda_o^{k+1}, \bar \lambda_o^{k+1} - \bar  \lambda  \rangle + r\langle B(p^k - p_o^{k+1}), B(p_o^{k+1}-p) \rangle - \langle p_o^{k+1} - p^k, -B^*(\bar \lambda_o^{k+1}  -\bar \lambda^{k}) \rangle.
\end{equation}
By \eqref{eq:def:po:lambdao}, we see
\begin{equation}\label{eq:p:lambda:po:lambdao}
p^{k+1} = \rho_{k}p_o^{k+1} +(1-\rho_{k})p^{k}, \quad \bar \lambda^{k+1} = \rho_{k} \bar \lambda_o^{k+1} +(1-\rho_{k})\bar \lambda^{k}.
\end{equation}
Substituting \eqref{eq:p:lambda:po:lambdao} into II, we see
\begin{align}
\text{II} =& -(1-\rho_{k}) \langle B(p^{k} - p_o^{k+1}),\bar \lambda_o^{k+1} - \bar \lambda \rangle -(1-\rho_{k}) \langle B(p_o^{k+1} -p), \bar \lambda^{k} - \bar \lambda_o^{k+1} \rangle \notag \\
& - (1-\rho_{k})^2\langle B(p^k -p_o^{k+1}), \bar \lambda^{k} - \bar \lambda_o^{k+1} \rangle. \label{eq:II:final:esti}
\end{align}
Combining \eqref{eq:I}, \eqref{eq:ref:I} and \eqref{eq:II:final:esti}, we have
\begin{align}
 \gap_{x}(x^{k+1})  &\leq \text{I} + \text{II}= \frac{1}{r}\langle  \bar \lambda^{k} -\bar \lambda_o^{k+1}, \bar \lambda_o^{k+1} - \bar \lambda  \rangle + r\langle B(p^k - p_o^{k+1}), B(p_o^{k+1}-p) \rangle \notag \\
& + \rho_{k} (2-\rho_{k}) \langle B(p^k-p_o^{k+1}), \bar\lambda^{k} -\bar \lambda_o^{k+1} \rangle \label{eq:gap:estimate:demtalil} \\
& -(1-\rho_k) \langle B(p^k-p_o^{k+1}), \bar \lambda_o^{k+1} -\bar \lambda \rangle -(1-\rho_k) \langle B(p_o^{k+1} -p), \bar \lambda^k - \bar \lambda_o^{k+1} \rangle.   \notag
\end{align}
For last two mixed terms in \eqref{eq:gap:estimate:demtalil}, we notice that
\begin{align}
\frac{1}{r}\langle v^{k} - v_{o}^{k+1}, v_{o}^{k+1} - v \rangle &= r \langle B(p^{k} - p_{o}^{k+1}), B(p_{o}^{k+1} - p) \rangle +  \frac{1}{r}\langle \bar \lambda^{k} -\bar \lambda_{o}^{k+1}, \bar \lambda_{o}^{k+1} - \bar \lambda \rangle  \notag \\
&+ \langle  -B(p^{k} - p_{o}^{k+1}),  \bar \lambda_{o}^{k+1} - \bar \lambda \rangle +  \langle  -B(p_{o}^{k+1}-p),  \bar \lambda^{k}- \bar  \lambda_{o}^{k+1} \rangle, \label{eq:expan:v} \\
\frac{1}{r}\langle w^{k} - w_{o}^{k+1}, w_{o}^{k+1} - w \rangle &= r \langle B(p^{k} - p_{o}^{k+1}), B(p_{o}^{k+1} - p) \rangle +  \frac{1}{r}\langle \bar \lambda^{k} -\bar \lambda_{o}^{k+1}, \bar \lambda_{o}^{k+1} - \bar \lambda \rangle  \notag \\
&+ \langle  B(p^{k} - p_{o}^{k+1}),  \bar \lambda_{o}^{k+1} - \bar \lambda \rangle +  \langle  B(p_{o}^{k+1}-p),  \bar \lambda^{k}- \bar  \lambda_{o}^{k+1} \rangle. \label{eq:expan:w}
\end{align}
Thus if $0 < \rho_{k} <1$, we can replace the last two mixed terms in \eqref{eq:gap:estimate:demtalil} by linear combination of the first two terms of the right hand side of \eqref{eq:expan:v} and  the left hand side of \eqref{eq:expan:v}, i.e., $1/r\langle v^{k} - v_{o}^{k+1}, v_{o}^{k+1} - v \rangle$,
leading to the estimate \eqref{eq:gap:estimate:relaxed:rho1}. Similarly, if $1 \leq \rho_{k} <2$, by \eqref{eq:expan:w} and \eqref{eq:gap:estimate:demtalil}, we get the estimate \eqref{eq:gap:estimate:relaxed:rho2}.
\end{proof}
\begin{remark}
If $\rho_k \equiv 1$ without relaxation, we have the simpler estimte. Assuming $Au+ Bp = c$, considering iteration \eqref{admm:original:sys:final}, regarding to $(u, p, \bar \lambda)$, we have the following partial primal dual gap estimate for $k \geq 0$,
\begin{equation*} %\label{eq:gap:estimate}
   \begin{aligned}
  \gap_{(u,p,\bar \lambda)}(u^{k+1}, p^{k+1}, \bar \lambda^{k+1})  & \leq
r \langle B(p^{k} - p^{k+1}),B( p^{k+1} - p) \rangle +  \frac{1}{r}\langle \bar \lambda^{k} -\bar \lambda^{k+1}, \bar \lambda^{k+1} - \bar \lambda \rangle  \\
       & \quad - \langle p^{k+1} - p^{k}, -B^*(\bar \lambda^{k+1} -\bar \lambda^{k})\rangle.
  \end{aligned}
\end{equation*}
\end{remark}
By Theorem \ref{thm:gap:estimate:pre:u} and \eqref{eq:def:po:lambdao}, we have the following gap estimate.
\begin{corollary}\label{cor:relax:partial}
For iteration \eqref{admm:relax:sys:final}, with $v$, $w$ defined in \eqref{def:v:w}, and assuming $Au + Bp  = c$, for the primal-dual gap of \eqref{admm:relax:sys:final}, we have, for $k \geq 0$, while $0< \rho_k <1$,
%\begin{equation} \label{eq:gap:estimate:relaxed:rho1:origin}
   \begin{align}
  &\gap_{x}(x^{k+1})   \leq
 \frac{r}{2}(\|B(p^{k}-p)\|^2-\|B(p^{k+1}-p)\|^2) +  \frac{1}{2r}(\|\bar \lambda^{k} -\bar \lambda \|^2 -\|\bar\lambda^{k+1} -\bar \lambda \|^2 ) \notag \\
       &+ \frac{1-\rho_{k}}{2\rho_{k}r}(\|v^{k}-v\|^2-\|v^{k+1}-v\|^2) - \frac{2-\rho_{k}}{\rho_{k}}\langle -B(p^{k} - p^{k+1}), \bar \lambda^{k} -\bar \lambda^{k+1}\rangle \label{eq:gap:estimate:relaxed:rho1:origin}\\
       &- \frac{r(2-\rho_{k})}{2\rho_{k}}\|B(p^{k}-p^{k+1})\|^2-\frac{(2-\rho_{k})}{2r\rho_{k}}\|\bar\lambda^{k}-\bar\lambda^{k+1}\|^2
       - \frac{1-\rho_{k}}{r}\frac{2-\rho_{k}}{2\rho_{k}^2}\|v^{k}-v^{k+1}\|^2. \notag
  \end{align}
%\end{equation}
While $1 \leq \rho_k <2$, we have
%\begin{equation} \label{eq:gap:estimate:relaxed:rho2:origin}
   \begin{align}
  &\gap_{x}(x^{k+1})   \leq
 \frac{r(2-\rho_{k})}{2\rho_{k}}(\|B(p^{k}-p)\|^2-\|B(p^{k+1}-p)\|^2) +  \frac{2-\rho_{k}}{2r\rho_{k}}(\|\bar \lambda^{k} -\bar \lambda \|^2 -\|\bar\lambda^{k+1} -\bar \lambda \|^2 ) \notag \\
       &+ \frac{\rho_{k}-1}{2\rho_{k}r}(\|w^{k}-w\|^2-\|w^{k+1}-w\|^2) - \frac{2-\rho_{k}}{\rho_{k}}\langle -B(p^{k} - p^{k+1}), \bar \lambda^{k} -\bar \lambda^{k+1}\rangle \label{eq:gap:estimate:relaxed:rho2:origin}\\
       &- \frac{r(2-\rho_{k})^2}{2\rho_{k}^2}\|B(p^{k}-p^{k+1})\|^2-\frac{(2-\rho_{k})^2}{2r\rho_{k}^2}\|\bar\lambda^{k}-\bar\lambda^{k+1}\|^2
       - \frac{\rho_{k}-1}{r}\frac{2-\rho_{k}}{2\rho_{k}^2}\|w^{k}-w^{k+1}\|^2. \notag
  \end{align}
%\end{equation}
\end{corollary}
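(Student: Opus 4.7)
The plan is to start from the two inner-product bounds of Theorem \ref{thm:gap:estimate:pre:u} and turn each term of the form $\langle a^k - a_o^{k+1}, a_o^{k+1}-a\rangle$ into a telescoping difference of squared norms via the three-point identity $2\langle x-y,y-z\rangle = \|x-z\|^2 - \|x-y\|^2 - \|y-z\|^2$. The auxiliary quantities $p_o^{k+1}, \bar\lambda_o^{k+1}, v_o^{k+1}, w_o^{k+1}$ are then eliminated in favour of the true iterates $p^{k+1}, \bar\lambda^{k+1}, v^{k+1}, w^{k+1}$ through the linear relations in \eqref{eq:def:po:lambdao} together with the analogous identities for $v$ and $w$ that follow directly from the definitions in \eqref{def:v:w}.

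First, I would polarize each of the four inner products appearing on the right-hand side of \eqref{eq:gap:estimate:relaxed:rho1}, and independently of \eqref{eq:gap:estimate:relaxed:rho2}. This produces the ``cross-differences'' $\|a^k-a\|^2 - \|a^k-a_o^{k+1}\|^2 - \|a_o^{k+1}-a\|^2$ for $a$ ranging over the saddle-point components $Bp$, $\bar\lambda$, $v$ (or $w$ in the second range of $\rho_k$). Second, from $a_o^{k+1} = a^k + (a^{k+1}-a^k)/\rho_k$ one has the convex-combination style identity (valid for every $\rho_k\in(0,2)$)
\begin{equation*}
\|a_o^{k+1}-a\|^2 = \tfrac{1}{\rho_k}\|a^{k+1}-a\|^2 + \bigl(1-\tfrac{1}{\rho_k}\bigr)\|a^k-a\|^2 - \tfrac{\rho_k-1}{\rho_k^2}\|a^{k+1}-a^k\|^2,
\end{equation*}
together with $\|a^k-a_o^{k+1}\|^2 = \|a^{k+1}-a^k\|^2/\rho_k^2$ and, for the cross term, $\langle B(p^k-p_o^{k+1}),\bar\lambda^k-\bar\lambda_o^{k+1}\rangle = \langle B(p^k-p^{k+1}),\bar\lambda^k-\bar\lambda^{k+1}\rangle/\rho_k^2$. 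Substituting these into the polarized bound extracts, for each $a$, a telescoping contribution of the form $\|a^{k+1}-a\|^2 - \|a^k-a\|^2$ weighted by $-1/\rho_k$, plus a remainder proportional to $\|a^{k+1}-a^k\|^2$ whose coefficient depends on $\rho_k$.

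Third, performing this substitution for $a\in\{Bp,\bar\lambda,v\}$ when $0<\rho_k<1$ and for $a\in\{Bp,\bar\lambda,w\}$ when $1\le\rho_k<2$, and then multiplying through by the prefactors $\rho_k/2$, $(1-\rho_k)/(2\rho_k)$, respectively $(2-\rho_k)/2$, $(\rho_k-1)/(2\rho_k)$, dictated by Theorem \ref{thm:gap:estimate:pre:u}, the telescoping pieces realign precisely with the coefficients $r/2$, $1/(2r)$, $(1-\rho_k)/(2\rho_k r)$ in the first case and $r(2-\rho_k)/(2\rho_k)$, $(2-\rho_k)/(2r\rho_k)$, $(\rho_k-1)/(2\rho_k r)$ in the second. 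The leftover squared-difference pieces $\|B(p^{k+1}-p^k)\|^2$, $\|\bar\lambda^{k+1}-\bar\lambda^k\|^2$ and $\|v^{k+1}-v^k\|^2$ (respectively $\|w^{k+1}-w^k\|^2$), together with the rescaled mixed term $\rho_k(2-\rho_k)\langle B(p^k-p_o^{k+1}),\bar\lambda^k-\bar\lambda_o^{k+1}\rangle = \frac{2-\rho_k}{\rho_k}\langle B(p^k-p^{k+1}),\bar\lambda^k-\bar\lambda^{k+1}\rangle$, assemble into the stated negative remainders and the cross term $-\frac{2-\rho_k}{\rho_k}\langle -B(p^k-p^{k+1}),\bar\lambda^k-\bar\lambda^{k+1}\rangle$. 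The main obstacle is purely the bookkeeping of coefficients: one must verify that the powers of $\rho_k$ and $2-\rho_k$ combine exactly into the fractions $\frac{r(2-\rho_k)}{2\rho_k}$, $\frac{(2-\rho_k)^2}{2\rho_k^2}$, $\frac{1-\rho_k}{r}\frac{2-\rho_k}{2\rho_k^2}$ and their companions stated in the corollary, without sign errors in either regime; beyond this careful accounting there is no further conceptual step required.
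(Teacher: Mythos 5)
Your proposal is correct and follows essentially the same route as the paper's own proof: both start from Theorem \ref{thm:gap:estimate:pre:u}, use the relations $a_o^{k+1}=a^k+(a^{k+1}-a^k)/\rho_k$ for $a\in\{p,\bar\lambda,v,w\}$ (the $v$, $w$ cases following from linearity of $B$), and apply the polarization identity to convert each inner product into a telescoping difference of squared norms plus a negative remainder, with the same resulting coefficients. The only difference is immaterial bookkeeping order --- you polarize first and then substitute, while the paper substitutes first and then polarizes the resulting inner product.
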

\begin{proof}
We just need to show the following three inner products, since others are similar. By \eqref{eq:def:po:lambdao} and polarization identity, we arrive at
\begin{align*}
\langle B(p^{k}-p_{o}^{k+1}), B(p_{o}^{k+1} - p) \rangle & = \langle B (p^{k}-p^{k+1})/\rho_{k}, B\{(p^{k+1}-p^{k})/\rho_{k} +p^{k}- p^{k+1}+ p^{k+1} - p\} \rangle \\
& = (-\frac{1}{\rho_{k}^2}+ \frac{1}{\rho_{k}})\|B(p^{k}-p^{k+1})\|^2 + \frac{1}{\rho_{k}}\langle B( p^{k}-p^{k+1}), B(p^{k+1} - p) \rangle\\
& =  \frac{1}{2\rho_{k}}(\|B(p^{k}-p)\|^2-\|B(p^{k+1}-p)\|^2)-\frac{2-\rho_{k}}{2\rho_{k}^2}\|B(p^{k}-p^{k+1})\|^2.
\end{align*}
Similarly, we have
\begin{align*}
\langle \bar \lambda^{k} -\bar \lambda_{o}^{k+1}, \bar \lambda_{o}^{k+1} - \bar \lambda \rangle
& = \frac{1}{2 \rho_k}(\| \bar \lambda^{k}-\bar \lambda\|^2 -\| \bar \lambda^{k+1}-\bar \lambda^k\|^2) - \frac{2-\rho_k}{2 \rho_k^2}\|\bar \lambda^{k+1} - \bar \lambda^k\|^2.
\end{align*}
Consequently, we get
\begin{align*}
 \rho_{k}(2-\rho_{k})\langle -B(p^{k} - p_{o}^{k+1}), \bar \lambda^{k} -\bar \lambda_{o}^{k+1}\rangle &=  \rho_{k}(2-\rho_{k})\langle -B(p^{k} - p^{k+1})/\rho_{k}, (\bar \lambda^{k} -\bar \lambda^{k+1})/\rho_{k}\rangle\\
 &=\frac{2-\rho_{k}}{\rho_{k}}\langle -B( p^{k} - p^{k+1}), \bar \lambda^{k} -\bar \lambda^{k+1}\rangle.
\end{align*}

The cases for $ \langle  v^{k} -v_{o}^{k+1}, v_{o}^{k+1} - v \rangle $ and $ \langle w^{k} -w_{o}^{k+1}, w_{o}^{k+1} - w \rangle $ are similar.
Substituting these formulas to Theorem \ref{thm:gap:estimate:pre:u}, we get this corollary.
\end{proof}
%Similar to Lemma \ref{lem:admm-fixed-points}, we also have the following lemma.
%\begin{lemma}
%  \label{lem:admm-fixed-points:relax}
%  Iteration~\eqref{admm:relax:sys:final} possesses the following
%  properties:
%    If $\wlim_{i \to \infty}(u^{k_i}, p^{k_i}, \bar \lambda^{k_i}) = (u^*, p^*,\bar \lambda^*)$ and
%    $\lim_{i \to \infty} ( B(p^{k_i} -  p^{k_i+1}), \bar \lambda^{k_i} - \bar \lambda^{k_i+1})
%    = (0,0)$, then
%    $(u^{k_i+1}, p^{k_i+1}, \bar \lambda^{k_i+1})$ converges weakly to
%    a saddle point for~\eqref{eq:saddle-point-prob}.
%\end{lemma}
\begin{lemma}
  \label{lem:admm-fixed-points:rela}
  Iteration~\eqref{admm:relax:sys:final} possesses the following
  properties:
  \begin{enumerate}
  \item
    A point $(u^*,p^*,\bar\lambda^* )$ is a fixed point if and only
    if $(u^*, p^*, \bar \lambda^*)$ is a saddle point for~\eqref{eq:saddle-point-prob}.
  \item
      If $\wlim_{i \to \infty}(u^{k_i}, p^{k_i}, \bar \lambda^{k_i}) = (u^*, p^*,\bar \lambda^*)$ and
    $\lim_{i \to \infty} ( B(p^{k_i} -  p^{k_i+1}), \bar \lambda^{k_i} - \bar \lambda^{k_i+1})
    = (0,0)$, then
    $(u^{k_i+1}, p^{k_i+1}, \bar \lambda^{k_i+1})$ converges weakly to
    a fixed point, where $\wlim$ is denoted as the weak convergence in Hilbert spaces hereafter.
  \end{enumerate}
\end{lemma}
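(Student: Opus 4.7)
For part~(1), I would verify the fixed-point characterization by direct substitution. Setting $u^{k+1}=u^k=u^*$, $p^{k+1}=p^k=p^*$, and $\bar\lambda^{k+1}=\bar\lambda^k=\bar\lambda^*$ in \eqref{admm:relax:sys:final}, the multiplier update collapses to $r\rho_k(Au^*+Bp^*-c)=0$, which, since $r>0$ and $\rho_k\in(0,2)$, forces primal feasibility $Au^*+Bp^*=c$. Feeding this back into the $u$-update and unpacking the resolvent cancels the $rA^*(Bp^*-c)$-type terms against $rA^*Au^*$ and reduces the inclusion to $-A^*\bar\lambda^*\in\partial F(u^*)$; an analogous reduction on the $p$-update (using the identity already derived as \eqref{eq:relax:p:subgradient}) yields $-B^*\bar\lambda^*\in\partial G(p^*)$. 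Together with feasibility these are precisely the optimality conditions for $(u^*,p^*,\bar\lambda^*)$ to be a saddle point of $\mL$ in \eqref{eq:saddle-point-prob}. The converse runs the same calculation backwards, legitimized by the single-valuedness of $(rA^*A+\partial F)^{-1}$ and $(rB^*B+\partial G)^{-1}$ granted by \eqref{eq:assumption:admm}.

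For part~(2), I plan to first produce weak subsequential limits of the shifted iterates and feasibility in the limit. The hypothesis $\bar\lambda^{k_i}\weakto\bar\lambda^*$ combined with the strong convergence $\bar\lambda^{k_i}-\bar\lambda^{k_i+1}\to 0$ gives $\bar\lambda^{k_i+1}\weakto\bar\lambda^*$ at once. The Lipschitz continuity of the two resolvents from \eqref{eq:assumption:admm}, applied to the bounded inputs supplied by $(u^{k_i},p^{k_i},\bar\lambda^{k_i})$, makes $(u^{k_i+1},p^{k_i+1})$ bounded, so along a further subsequence (still labelled $k_i$) one has $u^{k_i+1}\weakto u^{**}$ and $p^{k_i+1}\weakto p^{**}$. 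A direct algebraic manipulation of the $\bar\lambda$-update produces
\[
\rho_{k_i}\bigl(Au^{k_i+1}+Bp^{k_i+1}-c\bigr)=\tfrac{1}{r}\bigl(\bar\lambda^{k_i+1}-\bar\lambda^{k_i}\bigr)+(1-\rho_{k_i})\,B(p^{k_i}-p^{k_i+1}).
\]
Since $\rho_{k_i}\ge\rho_0>0$ by the monotonicity in \eqref{eq:assum:relaxation} and the right-hand side tends to $0$ strongly by hypothesis, $Au^{k_i+1}+Bp^{k_i+1}-c\to 0$ strongly, and hence $Au^{**}+Bp^{**}=c$ in the weak limit.

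What remains, and is where the real work lies, is passing to the limit in the two subgradient inclusions $\xi^{k_i+1}:=-A^*\bigl[\bar\lambda^{k_i}+r(Au^{k_i+1}+Bp^{k_i}-c)\bigr]\in\partial F(u^{k_i+1})$ and $\eta^{k_i+1}:=-B^*\bar\lambda^{k_i+1}\in\partial G(p^{k_i+1})$ extracted from the $u$- and $p$-updates exactly as in the proof of Theorem~\ref{thm:gap:estimate:pre:u}. Using the strong convergences established above, $\xi^{k_i+1}\weakto-A^*\bar\lambda^*=:\xi^*$ and $\eta^{k_i+1}\weakto-B^*\bar\lambda^*=:\eta^*$, but the convergence is only weak on both sides, and demi-closedness of the maximal monotone graphs $\partial F,\partial G$ is customarily available only in the weak--strong topology; this is the hard part. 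The resolution I envisage is to pair the inclusions: a direct computation that absorbs the strong remainders $\xi^{k_i+1}+A^*\bar\lambda^{k_i}\to 0$ and $\bar\lambda^{k_i+1}-\bar\lambda^{k_i}\to 0$ yields
\[
\langle\xi^{k_i+1},u^{k_i+1}\rangle+\langle\eta^{k_i+1},p^{k_i+1}\rangle=-\langle\bar\lambda^{k_i},Au^{k_i+1}+Bp^{k_i+1}\rangle+o(1),
\]
and since $Au^{k_i+1}+Bp^{k_i+1}\to c$ strongly the right-hand side converges to $-\langle\bar\lambda^*,c\rangle=\langle\xi^*,u^{**}\rangle+\langle\eta^*,p^{**}\rangle$, the last equality using the feasibility just established. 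Combined with the Fenchel--Young $\liminf$ bounds $\liminf\langle\xi^{k_i+1},u^{k_i+1}\rangle\ge F(u^{**})+F^*(\xi^*)\ge\langle\xi^*,u^{**}\rangle$ and its analogue for $G$, which come from the weak lower semi-continuity of $F,F^*,G,G^*\in\Gamma_0$ applied to the Fenchel--Young equalities $\langle\xi^{k_i+1},u^{k_i+1}\rangle=F(u^{k_i+1})+F^*(\xi^{k_i+1})$ and its $G$-counterpart, each individual inner product is forced to converge to its target. The converse direction of Fenchel--Young then delivers $\xi^*\in\partial F(u^{**})$ and $\eta^*\in\partial G(p^{**})$, which together with feasibility form the KKT system, and part~(1) identifies $(u^{**},p^{**},\bar\lambda^*)$ as a fixed point.
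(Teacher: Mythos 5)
Your part~(1) is essentially the paper's own argument: reduce the fixed-point equations to the KKT system $-A^*\bar\lambda^*\in\partial F(u^*)$, $-B^*\bar\lambda^*\in\partial G(p^*)$, $Au^*+Bp^*=c$, using $r\rho^*>0$. For part~(2), your argument is correct but proceeds by a genuinely different mechanism at the decisive step. The paper never confronts $\partial F$ and $\partial G$ separately: it assembles the residuals into the single maximal monotone saddle-point operator $(u,p,\bar\lambda)\mapsto\bigl(A^*\bar\lambda+\partial F(u),\,B^*\bar\lambda+\partial G(p),\,-Au-Bp+c\bigr)$, observes that its values along the shifted iterates converge \emph{strongly} to zero (the terms $A^*\bar\lambda^{k_i+1}$ and $B^*\bar\lambda^{k_i+1}$ exactly absorb the weakly convergent parts of the subgradients; e.g.\ the second component is identically $0$ by \eqref{eq:relax:p:subgradient}), and then invokes weak--strong closedness of maximal monotone graphs (Proposition 20.33 of \cite{HBPL}). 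You keep the two inclusions separate, accept that points and subgradients both converge only weakly, and recover the limit inclusions by the Fenchel--Young/lower-semicontinuity squeeze on the paired sum $\langle\xi^{k_i+1},u^{k_i+1}\rangle+\langle\eta^{k_i+1},p^{k_i+1}\rangle$, whose limit $-\langle\bar\lambda^*,c\rangle$ comes from the strong vanishing of the primal residual paired against the weakly convergent multipliers. I checked the algebraic identity and the liminf bookkeeping; both are sound. Your route is longer but self-contained (it re-derives by conjugate duality the graph-closedness the paper gets by citation), and it is more careful on a point the paper passes over silently: the hypotheses give weak convergence of $\bar\lambda^{k_i+1}$ and of $Bp^{k_i+1}$ only, not of $u^{k_i+1}$ or $p^{k_i+1}$ themselves, so the weak convergence of the shifted primal iterates must be manufactured --- you do this via boundedness (Lipschitz resolvents from \eqref{eq:assumption:admm}) and extraction of a further subsequence, whereas the paper implicitly assumes it.

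That extraction creates the one loose end: as written, you prove that \emph{a further subsequence} of $(u^{k_i+1},p^{k_i+1},\bar\lambda^{k_i+1})$ converges weakly to a fixed point, while the statement asserts this of the $k_i$-indexed sequence itself. The patch is short and uses only what you already have. Every weak cluster point $(u^{**},p^{**},\bar\lambda^{**})$ of the shifted sequence satisfies $\bar\lambda^{**}=\bar\lambda^*$ and $Bp^{**}=Bp^*$ (from the strong hypotheses and uniqueness of weak limits), and, being a fixed point by your argument, it obeys \eqref{admm:fxied:points}; the single-valued Lipschitz resolvents then determine $u^{**}=(rA^*A+\partial F)^{-1}\bigl[A^*(-rBp^*+rc-\bar\lambda^*)\bigr]$ and $p^{**}=(rB^*B+\partial G)^{-1}\bigl[B^*(rBp^*-\bar\lambda^*)\bigr]$ uniquely. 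Hence all weak cluster points coincide, and the bounded shifted sequence converges weakly to that common fixed point --- the same device the paper itself deploys later in the proof of Theorem~\ref{thm:gap:estimate:pre:u:weak}. With that sentence added, your proof is complete.
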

\begin{proof}
Regarding to the first point, $(u^*,p^*, \bar \lambda^*)$ is a saddle point for~\eqref{eq:saddle-point-prob} is equivalent to
\begin{equation}\label{eq:saddle:con}
0 \in \partial F(u^*) + A^* \bar \lambda^*, \quad 0 \in B^* \bar \lambda^* + \partial G(p), \quad Au^* + Bp^* -c = 0 .
\end{equation}
%Since iteration
%is equivalent to the proximal iteration \eqref{iter:proximal}, thus $\bar  \lambda^*$ is a solution of the dual problem \eqref{eq:dual}. By Fenchel-Rockfellor Theorem \cite{}, we see $u^*$ is a solution of the primal problem \eqref{saddle:primal:original}.
A point $(u^*,p^*,\bar\lambda^* )$
is a fixed point of iteration \eqref{admm:relax:sys:final}, by \eqref{eq:assum:relaxation}, if and only if
\begin{subequations}
\begin{align}\label{eq:opimi:fixed:points}
& A^*(-rBp^* - \bar \lambda^* +rc) -rA^*Au^{*} \in \partial F(u^*), \\
&B^*(-r\rho^*(Au^{*} +Bp^* -c) - \bar \lambda^* ) \in \partial G(p^{*}), \\
&\rho^*(Au^* + Bp^* -c)  =0.\label{eq:opimi:fixed:points:last}
\end{align}
\end{subequations}
Since $\rho^* \in (0,2)$ and $r >0$,
thus \eqref{eq:opimi:fixed:points}-\eqref{eq:opimi:fixed:points:last} are equivalent to
\begin{equation}\label{eq:fixed:con}
-A^* \bar \lambda^* \in \partial F(u^*), \quad  -B^*\bar \lambda^* \in \partial G(p^*), \quad Au^* + Bp^* -c  =0,
\end{equation}
which means that $(u^*, p^*, \bar \lambda^*)$ is a saddle-point of \eqref{eq:saddle-point-prob}.
%By virtual of
%\[
%\bar \lambda^* \in \partial G(Ku^*) \Leftrightarrow Ku^* \in \partial G^*(\bar \lambda^*),
%\]
%what follows \eqref{eq:saddle:con} is equivalent to \eqref{eq:fixed:con}.

Next, by the updates of $u^{k+1}$ and $p^{k+1}$ in \eqref{admm:relax:sys:final}, we see that
\begin{align*}
& A^* \bar \lambda^{k+1} + A^*(-rBp^k - \bar \lambda^k +rc) -rA^*Au^{k+1} \in \partial F(u^{k+1}) + A^*\bar \lambda^{k+1}, \\
&B^* \bar \lambda^{k+1}+  B^*(-r\rho_k Au^{k+1} +r(1-\rho_k)Bp^k - \bar \lambda^k +r\rho_{k}c) -rB^*B p^{k+1} \in \partial G(p^{k+1})+ B^* \bar \lambda^{k+1},
\end{align*}
and by the update of $\bar \lambda^{k_i+1}$  in \eqref{admm:relax:sys:final} and the second assumption, we have
\[
Au^{k_i+1} + Bp^{k_i} - c \rightarrow 0.
\]
What follows is
\begin{align}
 A^*(\bar \lambda^{k_i+1} -\bar \lambda^{k_i})-rA^*(Au^{k_i+1} + Bp^{k_i} - c)  &\in \partial F(u^{k_i+1}) + A^*\bar \lambda^{k_i+1} \rightarrow 0, \\
0  &\in \partial G(p^{k_i+1})+ B^* \bar \lambda^{k_i+1} \rightarrow 0, \label{eq:opti:p}\\
Au^{k_i+1} + Bp^{k_i} - c + B(p^{k_i+1}-p^{k_i}) &= Au^{k_i+1} + Bp^{k_i+1}-c \rightarrow 0.
\end{align}

Combining the weak convergence of $(u^{k_i}, p^{k_i}, \bar \lambda^{k_i})$ as in the assumption, and the maximally monotone of
\[
  (u,p, \bar \lambda) \mapsto \bigl(A^* \bar \lambda + \subgrad F(u), B^* \bar \lambda
  + \subgrad G(p), -Au-Bp+c \bigr),
\]
which is also weak-strong closed (see Proposition 20.33 of \cite{HBPL}), we have
  $(0,0,0) \in \bigl(A^* \bar \lambda^* + \subgrad F(u^*),
  B^* \bar \lambda^* + \subgrad G(p), -Au^*-B^*p+c \bigr)$
  and $(u^*,p^*, \bar \lambda^*)$ is a saddle-point of~\eqref{saddle:primal:original}.
\end{proof}

%\todo{finish the remaining proof.}
%
%\subsection{Inertia}
%%\todo{How to do inertial version? Not easy, have not figured clear.}
%%\section{Connection to DR Method}
\begin{theorem}\label{thm:gap:estimate:pre:u:weak}
With the assumptions \eqref{eq:assumption:admm} and \eqref{eq:assum:relaxation} on the relaxation parameters $\rho_k$, if \eqref{saddle:primal:original} possesses a solution, then the iteration sequences $(u^k, p^k, \bar \lambda^k)$ of \eqref{admm:relax:sys:final} converge weakly to a saddle-point $(u^*, p^*, \bar \lambda^*)$ of \eqref{eq:saddle-point-prob} with $(u^*,p^*)$ being a solution of \eqref{saddle:primal:original}.
\end{theorem}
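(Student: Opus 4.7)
The plan is to turn Corollary~\ref{cor:relax:partial} into a Fej\'er-monotone inequality with respect to any saddle point and then promote the resulting subsequential weak convergence to a full limit by an Opial-type argument. I pick a saddle point $x^*=(u^*,p^*,\bar\lambda^*)$ of~\eqref{eq:saddle-point-prob} (granted by hypothesis) and set $x=x^*$ in Corollary~\ref{cor:relax:partial}. Because $x^*$ saddle-points the Lagrangian, $\gap_{x^*}(x^{k+1})\geq 0$, so \eqref{eq:gap:estimate:relaxed:rho1:origin}--\eqref{eq:gap:estimate:relaxed:rho2:origin} turn into Lyapunov descent inequalities.

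The key algebraic observation is that the ``mixed'' inner product $-\tfrac{2-\rho_k}{\rho_k}\langle -B(p^k-p^{k+1}),\bar\lambda^k-\bar\lambda^{k+1}\rangle$ together with the three negative quadratic terms in Corollary~\ref{cor:relax:partial} collapses, after expanding $\|v^k-v^{k+1}\|^2$ and $\|w^k-w^{k+1}\|^2$ via~\eqref{def:v:w} and regrouping coefficients, to the single non-positive residual $-\tfrac{2-\rho_k}{2r\rho_k^2}\|v^k-v^{k+1}\|^2$ in both regimes $\rho_k<1$ and $\rho_k\geq 1$. Expanding $\|v-v^*\|^2$ (resp.\ $\|w-w^*\|^2$) in the remaining ``good'' terms then assembles a unified Lyapunov
\[
\Phi_k(\rho)=\frac{1}{2r\rho}\bigl[r^2\|B(p^k-p^*)\|^2+\|\bar\lambda^k-\bar\lambda^*\|^2+2r(\rho-1)\langle B(p^k-p^*),\bar\lambda^k-\bar\lambda^*\rangle\bigr],
\]
which is non-negative because $|\rho-1|<1$, equivalent on bounded sets to $\|B(p^k-p^*)\|^2+\|\bar\lambda^k-\bar\lambda^*\|^2$ under $\rho_k\in[\rho_0,\rho^*]\subset(0,2)$, and whose $\rho$-derivative equals $-\tfrac{1}{2r\rho^2}\|v^k-v^*\|^2\leq 0$. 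Combining with the monotonicity $\rho_{k+1}\geq\rho_k$ yields the Fej\'er inequality $\Phi_{k+1}(\rho_{k+1})\leq\Phi_{k+1}(\rho_k)\leq\Phi_k(\rho_k)-\tfrac{2-\rho_k}{2r\rho_k^2}\|v^k-v^{k+1}\|^2$. Telescoping gives boundedness of $\{Bp^k\},\{\bar\lambda^k\}$ and summability $\sum_k\|v^k-v^{k+1}\|^2<\infty$; boundedness of $\{u^k\}$ follows from the Lipschitz continuity of $(rA^*A+\partial F)^{-1}$ assumed in~\eqref{eq:assumption:admm}.

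Extracting a weakly convergent subsequence $(u^{k_i},p^{k_i},\bar\lambda^{k_i})\weakto(u^*,p^*,\bar\lambda^*)$, I aim to invoke Lemma~\ref{lem:admm-fixed-points:rela}(2) to conclude the limit is a saddle point. The principal obstacle is that the summability argument delivers only the joint limit $v^{k_i}-v^{k_i+1}\to 0$, while Lemma~\ref{lem:admm-fixed-points:rela}(2) requires the two individual limits $B(p^{k_i}-p^{k_i+1})\to 0$ and $\bar\lambda^{k_i}-\bar\lambda^{k_i+1}\to 0$. To overcome this I would recast the iteration in Krasnoselskii--Mann form using the shadow variables $(p_o^{k+1},\bar\lambda_o^{k+1})$ from~\eqref{eq:def:po:lambdao}, use~\eqref{pre:admm:drway:classical:rewrite} to rewrite $\bar\lambda_o^{k+1}-\bar\lambda^k=r(Au^{k+1}+Bp_o^{k+1}-c)$, and exploit the firm nonexpansiveness of the underlying Douglas--Rachford operator (to be made explicit in Section~\ref{sec:pre:admm:relax}), which disentangles the joint $v$-direction into its individual components. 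Once Lemma~\ref{lem:admm-fixed-points:rela}(2) identifies the weak subsequential limit with a saddle point, a standard Opial argument using the Fej\'er monotonicity of $\Phi_k$ with respect to \emph{every} saddle point upgrades the subsequential limit to full weak convergence of $(u^k,p^k,\bar\lambda^k)$.
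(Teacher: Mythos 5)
Your Lyapunov function $\Phi_k(\rho)$ is exactly the paper's own quantity ($d_{k,\rho_k}$ for $\rho_k<1$ and $d_k(x^k,x)$ for $\rho_k\geq 1$) written in unified form, and both your collapse identity and the $\rho$-monotonicity computation are algebraically correct, so the skeleton (partial gap $\to$ Fej\'er descent $\to$ cluster points are fixed points via Lemma~\ref{lem:admm-fixed-points:rela} $\to$ uniqueness) coincides with the paper's. The first genuine gap is precisely the step you flag as the ``principal obstacle,'' and your proposed repair is not available where you need it: the Douglas--Rachford correspondence of Section~\ref{sec:pre:admm:relax} is established only later, for the preconditioned iteration applied to the modified problem \eqref{saddle:primal}, and itself rests on resolvent identities that are not set up inside Section~\ref{sec:relax:admm}; invoking it here is a forward reference, not a proof. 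Moreover the obstacle is self-inflicted: the paper never merges the mixed inner product into the quadratics. Since $-B^*\bar\lambda^{k+1}\in\partial G(p^{k+1})$ by \eqref{eq:relax:p:subgradient}, monotonicity of $\partial G$ gives $\langle -B(p^k-p^{k+1}),\bar\lambda^k-\bar\lambda^{k+1}\rangle\geq 0$ for $k\geq 1$ (this is \eqref{eq:estimate:relax:rho:ine:inclu:g}), so the paper discards the mixed term and keeps the three separate negative quadratics, whose summation yields the \emph{individual} limits $B(p^k-p^{k+1})\to 0$ and $\bar\lambda^k-\bar\lambda^{k+1}\to 0$ that Lemma~\ref{lem:admm-fixed-points:rela} requires. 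If you insist on your collapsed residual, the same monotonicity fact closes your gap in one line:
\[
\|v^k-v^{k+1}\|^2 = r^2\|B(p^k-p^{k+1})\|^2+\|\bar\lambda^k-\bar\lambda^{k+1}\|^2
+2r\langle -B(p^k-p^{k+1}),\bar\lambda^k-\bar\lambda^{k+1}\rangle
\geq r^2\|B(p^k-p^{k+1})\|^2+\|\bar\lambda^k-\bar\lambda^{k+1}\|^2,
\]
so the joint limit already implies the individual ones; the ``firm nonexpansiveness of $v\mapsto\bar\lambda$'' you want from Douglas--Rachford is, after unwinding, nothing but this inequality.

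The second gap is in your closing Opial step: $\Phi_k$ involves only $Bp^k$ and $\bar\lambda^k$, so Fej\'er monotonicity with respect to every saddle point can at best give uniqueness of the weak limit of $(Bp^k,\bar\lambda^k)$ --- not of $(u^k,p^k)$, which is what Theorem~\ref{thm:gap:estimate:pre:u:weak} asserts. (Even that partial uniqueness needs care, because comparing $\Phi_k$ at two weak limits produces inner products of two weakly convergent sequences; the paper defuses this with the rewriting \eqref{eq:inequality:relax:lambda:rho2}.) To finish one must, as the paper does, note that any two weak cluster points are fixed points of \eqref{admm:relax:sys:final} and hence satisfy \eqref{admm:fxied:points}, and then use the Lipschitz continuity of $(rA^*A+\partial F)^{-1}$ and $(rB^*B+\partial G)^{-1}$ guaranteed by \eqref{eq:assumption:admm}, together with $Bp^*=Bp^{**}$ and $\bar\lambda^*=\bar\lambda^{**}$, to conclude $u^*=u^{**}$ and $p^*=p^{**}$. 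The same resolvent-Lipschitz argument is also what gives boundedness of $p^k$ (you mention it only for $u^k$), which is needed before Banach--Alaoglu can be applied to the full triple.
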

\begin{proof}
%The proof is a little different from the proof of Theorem \ref{thm:dr-weak-convergence}, because of the changing of $\rho_k$.
Here we mainly prove the boundedness of the iteration sequence $(u^k, p^k, \bar \lambda^k)$ of \eqref{admm:relax:sys:final}, and the uniqueness of the weak limit for all weakly convergent subsequences of $(u^k, p^k, \bar \lambda^k)$. These lead to the weak convergence of $(u^k, p^k, \bar \lambda^k)$.
%, and the remaining proof is similar to Theorem \ref{thm:dr-weak-convergence} with the help of Lemma \ref{lem:admm-fixed-points:relax}.
Remembering that $-B^* \bar \lambda^{k+1} \in \partial G(p^{k+1})$ by \eqref{eq:relax:p:subgradient}, we have
\begin{equation}\label{eq:estimate:relax:rho:ine:inclu:g}
- \frac{2-\rho_{k}}{\rho_{k}}\langle -B(p^{k} - p^{k+1}), \bar \lambda^{k} -\bar \lambda^{k+1}\rangle \leq 0, \quad \forall k \geq 1.
\end{equation}
Let us consider the case $\rho_k \in (0,1) $ first.
Summing up the inequality \eqref{eq:gap:estimate:relaxed:rho1:origin} from $k_0$ to $k$, $k > k_0\geq 1$, with \eqref{eq:estimate:relax:rho:ine:inclu:g}, non-increasing of $\frac{1-\rho_k}{2\rho_kr}$ by \eqref{eq:assum:relaxation}, and notation,
\[
  d_{k,\rho_k} :=\frac{r\norm{ B(p^{k} - p)}^2}{2}
    + \frac{\norm{\bar \lambda^{k} - \lambda}^2}{2r}+ \frac{1-\rho_{k}}{2\rho_{k}r} \|v^{k} -v\|^2,
\]
we have
  \begin{equation}\label{eq:dr_boundedness:relax}
   d_{k,\rho_k}
    +     \sum_{k' = k_0}^{k-1}\frac{2-\rho_{k'}}{2\rho_{k'}}\biggl[
 \frac{r\norm{ B(p^{k'} - p^{k'+1})}^2}{2}
  +  \frac{\norm{\bar \lambda^{k'} - \bar \lambda^{k'+1}}^2}{2r} + \frac{1-\rho_{k'}}{\rho_{k'} r} \|v^{k'}-v^{{k'}+1}\|^2 \biggr]  \leq
     d_{k_0,\rho_{k_0}}.
  \end{equation}

%  Suppose $\lim_{k \rightarrow\infty}\rho_k = \rho^{*}$, thus
%  Denoting
\eqref{eq:dr_boundedness:relax} implies that $\seq{d_{k,\rho_k}}$
  is a non-increasing sequence with limit $d^*$. Furthermore,
  $B(p^k - p^{k+1}) \to 0$ as well as $\bar \lambda^k - \bar \lambda^{k+1} \to 0$
  as $k \to \infty$. Since $\frac{2-\rho_k}{2\rho_k} \geq \frac{1}{2}$ and $\frac{1-\rho_k}{2\rho_kr}  \geq 0$, \eqref{eq:dr_boundedness:relax} also tells that $\seq{(Bp^k,\bar \lambda^k)}$ is uniformly
  bounded whenever~\eqref{saddle:primal:original} has a solution.

  By the Lipschitz continuity of $(rA^*A + \partial F)^{-1}$, $(rB^*B+\partial G)^{-1}$ and the updates of $u^{k+1}$, $p^{k+1}$ in \eqref{admm:relax:sys:final}, we can get the uniform boundedness of $(u^{k}, p^{k}, \bar \lambda^{k})$ of iteration \eqref{admm:relax:sys:final}.
By the Banach-Alaoglu Theorem, there exists a weakly convergent subsequence of $(u^k, p^k, \bar \lambda^k)$ and we denote the weak limit of the subsequence as
  $( u^*,p^*, \bar \lambda^*)$. By virtue of Lemma~\ref{lem:admm-fixed-points:rela},
  $(u^*, p^*, \bar \lambda^*)$ is a fixed-point of the iteration \eqref{admm:relax:sys:final}.
  %with $(x^*,\bar \lambda^*)$
  % with $p^* = Ku^*$ being a solution of~\eqref{saddle:primal:original} and
 % an accumulation point of $\seq{(p^k, \bar \lambda^k)}$.
 Let us prove the uniqueness of the weak limit.
  Supposing that $(u^{**}, p^{**}, \bar \lambda^{**})$ is another
  weak limit of another subsequence,
  then we have,
  \begin{align*}
   &r \scp{ Bp^k}{ B(p^{**} - p^{*})} +
    \frac{1}{r} \scp{\bar \lambda^k}{\bar \lambda^{**} - \bar \lambda^{*}} +  \frac{1-\rho_k}{\rho_kr} \scp{v^k}{v^{**} - v^{*}} =
      \frac{r\norm{B(p^k - p^*) }^2}{2}
      +   \frac{\norm{\bar \lambda^k - \lambda^*}^2}{2r}
    \\
    &
      + \frac{1-\rho_{k}}{2\rho_{k}r} \|v^{k} -v^*\|^2-\frac{r\norm{B(p^k - p^{**})}^2}{2}
      -   \frac{\norm{\bar \lambda^k - \lambda^{**}}^2}{2r} -\frac{1-\rho_{k}}{2\rho_{k}r} \|v^{k} -v^{**}\|^2
   \\
    &   - \frac{r\norm{Bp^*}^2}{2} - \frac{\norm{\bar \lambda^*}^2}{2r}
      + \frac{r\norm{B p^{**}}^2}{2}
      + \frac{\norm{\bar \lambda^{**}}^2}{2r} - \frac{1-\rho_k}{\rho_kr}\|v^*\|^2 + \frac{1-\rho_k}{2\rho_kr}\|v^{**}\|^2.
  \end{align*}
  Let us take the subsequences $(p^{k_i}, \bar \lambda^{k_i})$ and $(p^{k_j}, \bar \lambda^{k_j})$ in the above equation, which have weak limits $(p^*, \bar \lambda^*)$ and $(p^{**}, \bar \lambda^{**})$ correspondingly. We have,
    \[
    r \scp{ Bp^{**} - Bp^*}{ B(p^{**} - p^{*})} +
    \frac{1}{r} \scp{\bar \lambda^{**} - \bar \lambda^{*}}{\bar \lambda^{**} - \bar \lambda^{*}} +  \frac{1-\rho^*}{\rho^{*}r} \scp{v^{**} - v^{*}}{v^{**} - v^{*}} = 0,
    \]
    and what follows are $Bp^* = Bp^{**}$ and $\bar \lambda^* = \bar \lambda^{**}$.

 Now we prove there is only one weak limit for all weakly convergent subsequences of $(u^k, p^k, \bar \lambda^k)$. By Lemma \ref{lem:admm-fixed-points:rela}, $(u^*, p^*, \bar \lambda^*)$ and $(u^{**}, p^{**}, \bar \lambda^{**})$ are both fixed points of the iteration \eqref{admm:relax:sys:final}. Thus, they both satisfy
  \begin{equation}
    \label{admm:fxied:points}
    \left\{
    \begin{aligned}
       &u'
      = (rA^*A + \partial F)^{-1}[A^*(-rBp' +rc - \bar{\lambda}')],\\
&p'  = (r B^*B + \partial G)^{-1}[B^*( rBp' - \bar{\lambda}')],
      \\
      &Au' + Bp'=c.
      % \label{PADMM:lambda}
    \end{aligned}
    \right.
  \end{equation}
  By the Lipschitz continuity of $(rA^*A + \partial F)^{-1}$ and the first equation of \eqref{admm:fxied:points}, we have
  \[
  \|u^*-u^{**}\| \leq \|(rA^*A + \partial F)^{-1}\|(r\|A^*\| \|Bp^*-Bp^{**}\|+ \|A^*\|\|\bar \lambda^* - \bar \lambda^{**}\|)= 0,
  \]
   which leads to $u^* = u^{**}$. Similarly, by the Lipschitz continuity of $(rB^*B + \partial G)^{-1}$ and second equation of \eqref{admm:fxied:points}, we have $p^* = p^{**}$, which is based on
  \[
    \|p^*-p^{**}\| \leq \|(rB^*B + \partial G)^{-1}\|(r\|B^*\|(\|B\| \|p^*-p^{**}\|+ \|\bar \lambda^* - \bar \lambda^{**}\|))= 0,
  \]
which leads to the uniqueness of the weak limits of all weakly convergent subsequences of $(u^k, p^k, \bar \lambda^k)$. Thus we prove the iteration sequence $(u^k, p^k, \bar \lambda^k)$ of \eqref{admm:original:sys:final} weakly converge to $(u^*, p^*, \bar \lambda^*)$.

%    By completely similar arguments as the proof of Theorem \ref{thm:dr-weak-convergence}, the uniqueness of weak limit $(u^*, p^*, \bar \lambda^*)$ for all weakly convergent subsequence could be guaranteed. To see this, one just need to replace the fixed point equation \eqref{admm:fxied:points} by fixed point equation of \eqref{admm:relax:sys:final} with $\rho_k$ replaced by $\rho^* = \lim_{\rightarrow \infty} \rho_k$.
%

    For $\rho_k \in [1,2)$, %if $\rho_k  = \rho_0 \in (1,2)$ being a constant, then the proof is similar to the case $\rho_k \in (0,1)$ without any difficulty.
    if $\rho_{k} \equiv  c_0 \in(1,2)$ being a constant, the arguments are completely similar to $\rho_k \in (0,1)$ case with \eqref{eq:gap:estimate:relaxed:rho2:origin}.
       However, for varying $\rho_k$,  we need more analysis since $\frac{2-\rho_k}{2\rho_k}$ and $\frac{\rho_k-1}{2\rho_k}$ in \eqref{eq:gap:estimate:relaxed:rho2:origin} have different monotonicity. By the definition of $w^{k}$ in \eqref{def:v:w}, with $x=(u,p,\bar \lambda)$ being a solution of \eqref{eq:saddle-point-prob}, we have
\begin{equation} \label{eq:expan:w:relax}
\begin{aligned}
&\frac{\rho_{k}-1}{2\rho_{k}r}(\|w^{k}-w\|^2-\|w^{k+1}-w\|^2) \\
&= \frac{\rho_{k}-1}{2\rho_{k}r} (\| rBp^k - rBp + \bar \lambda^k - \bar \lambda\|^2 - \| rBp^{k+1} - rBp + \bar \lambda^{k+1} - \bar \lambda\|^2) \\
&= \frac{r(\rho_{k}-1)}{2\rho_{k}}(\| Bp^k - Bp\|^2 -\| Bp^{k+1} - Bp\|^2 )  + \frac{\rho_{k}-1}{2\rho_{k}r} (\|\bar \lambda^k - \bar \lambda\|^2 - \|\bar \lambda^{k+1} - \bar \lambda\|^2) \\
& \quad  + \frac{\rho_{k}-1}{\rho_{k} }[-\langle p^k-p, -B^*(\bar \lambda^k -\bar \lambda) \rangle + \langle p^{k+1}-p, -B^*(\bar \lambda^{k+1} -\bar \lambda) \rangle].
\end{aligned}
\end{equation}
Substituting \eqref{eq:expan:w:relax} into \eqref{eq:gap:estimate:relaxed:rho2:origin}, we have for $1 \leq \rho_k <2$,
\begin{equation}\label{ieq:relax:rho:2}
   \begin{aligned}
  \gap_{x}(x^{k+1})   &\leq
 \frac{r}{2\rho_{k}}(\|B(p^{k}-p)\|^2-\|B(p^{k+1}-p)\|^2) +  \frac{1}{2r\rho_{k}}(\|\bar \lambda^{k} -\bar \lambda \|^2 -\|\bar\lambda^{k+1} -\bar \lambda \|^2 ) \\
       &+ \frac{\rho_{k}-1}{\rho_{k} }[-\langle p^k-p, -B^*(\bar \lambda^k -\bar \lambda) \rangle + \langle p^{k+1}-p, -B^*(\bar \lambda^{k+1} -\bar \lambda) \rangle] \\
       &- \frac{r(2-\rho_{k})}{2\rho_{k}^2}\|B(p^{k}-p^{k+1})\|^2-\frac{(2-\rho_{k})}{2r\rho_{k}^2}\|\bar\lambda^{k}-\bar\lambda^{k+1}\|^2
       - \frac{\rho_{k}-1}{r}\frac{2-\rho_{k}}{2\rho_{k}^2}\|w^{k}-w^{k+1}\|^2 \\
       & - \frac{2-\rho_{k}}{\rho_{k}}\langle -B(p^{k} - p^{k+1}), \bar \lambda^{k} -\bar \lambda^{k+1}\rangle.
  \end{aligned}
\end{equation}
  Let us denote
  \begin{equation}\label{eq:inequality:relax:rho2:trans}
\begin{aligned}
& d_k(x^{k},x): = \frac{r\norm{ B( p^{k} - p)}^2}{2 \rho_k}
    + \frac{\norm{\bar \lambda^{k} - \lambda}^2}{2\rho_k r}  -\frac{\rho_{k}-1}{\rho_{k} }\langle p^k-p, -B^*(\bar \lambda^k -\bar \lambda) \rangle \\
     & =\frac{r\norm{ B( p^{k} - p)}^2}{2 \rho_k}
    + \frac{\norm{\bar \lambda^{k} - \lambda}^2}{2\rho_k r}  +\frac{1}{\rho_{k} }\langle p^k-p, -B^*(\bar \lambda^k -\bar \lambda) \rangle - \langle p^k-p, -B^*(\bar \lambda^k -\bar \lambda) \rangle.
     %= \frac{(2-\rho_k)r}{2\rho_k}\|B(p^k - p)\|^2 + \frac{(2-\rho_k)}{2\rho_k r}\|\bar \lambda^k - \bar \lambda\| + \frac{\rho_k-1}{2\rho_k}\|\sqrt{r}B(p^k-p) + (\bar \lambda^k - \bar \lambda)/\sqrt{r} \|^2,
\end{aligned}
\end{equation}
By the assumption that $x=(u,p,\bar \lambda)$ being a solution of \eqref{eq:saddle-point-prob}, we have $-B^*\bar \lambda \in \partial G(p)$, and similar to the inequality \eqref{eq:estimate:relax:rho:ine:inclu:g}, we get
\begin{equation}\label{eq:fight:back}
\langle p^k-p, -B^*(\bar \lambda^k -\bar \lambda) \rangle \geq 0.
\end{equation}
Summing up the inequality \eqref{ieq:relax:rho:2} from $k_0$ to $k$, $k> k_0 \geq 1$, with \eqref{eq:estimate:relax:rho:ine:inclu:g}, \eqref{eq:inequality:relax:rho2:trans}, \eqref{eq:fight:back}, the non-decreasing of $\rho_{k}$ and the cancellation of the terms $-\langle p^k-p, -B^*(\bar \lambda^k -\bar \lambda) \rangle$ from $k_0+1$ to $k-1$, we have
 %\begin{equation}
 \begin{align}
  % & \frac{r\norm{ B( p^{k} - p')}^2}{2 \rho_k}
  %  + \frac{\norm{\bar \lambda^{k} - \lambda'}^2}{2\rho_k r}  -\frac{\rho_{k}-1}{\rho_{k} }\langle p^k-p', -B^*(\bar \lambda^k -\bar \lambda') \rangle \\
   & d_k(x^{k},x) +     \sum_{k' = k_0}^{k-1}\frac{(2-\rho_{k'})^2}{2\rho_{k'}^2}\biggl[
 \frac{r\norm{ B(p^{k'} - p^{k'+1})}^2}{2}
    +  \frac{\norm{\bar \lambda^{k'} - \bar \lambda^{k'+1}}^2}{2r} + \frac{\rho_{k'} -1}{(2-\rho_{k'}) r} \|w^{k'}-w^{k'+1}\|^2 \biggr] \notag \\
& \leq d_{k_0}(x^{k_0},x). \label{eq:inequality:relax:rho2}
    % \frac{r\norm{B(p^{k_0} - p') }^2}{2\rho_{k_0}}
   % + \frac{\norm{\bar \lambda^{k_0} - \bar  \lambda' }}{2\rho_{k_0}r} -\frac{\rho_{k_0}-1}{\rho_{k_0} }\langle p^{k_0}-p', -B^*(\bar \lambda^{k_0} -\bar \lambda') \rangle.
  \end{align}
 % \end{equation}
 We conclude that $d_k(x^k,x)$ is a decreasing sequence by \eqref{eq:inequality:relax:rho2}. Similarly, the uniform boundedness of $(Bp^{k}, \bar \lambda^k)$ could be obtained by \eqref{eq:inequality:relax:rho2} and \eqref{eq:inequality:relax:rho2:trans}. Then we get the uniform boundedness of the iteration sequence $x^{k} = (u^{k},p^{k}, \bar \lambda^{k})$, by iteration \eqref{admm:relax:sys:final} with Lipschitz continuity of $(\partial F + rA^*A)^{-1}$ and $(\partial G + rB^*B)^{-1}$. Supposing there exist two subsequence $x^{k_i}$ and $x^{k_j}$ of $x^{k}$ that weakly converge to two weak limits $x^* = (u^*, p^*, \bar \lambda^*)$ and $x^{**} = (u^{**}, p^{**}, \bar \lambda^{**})$ separately,
%{\color{red}In case $B$ is compact or  $\langle p^{k_i}-p', -B^*(\bar \lambda^{k_i} -\bar \lambda') \rangle \rightarrow 0$ for any weakly convergent subsequence, by similar arguments as $0 < \rho_k \leq 1$ case, we have the same weak convergence. Could not use div-curl Lemma?}
 by direct calculation, we have
\begin{equation}\label{eq:relax:limit:unique:primal}
\begin{aligned}
&\frac{r}{\rho_k} \langle Bp^k, Bp^{**} - Bp^* \rangle  + \frac{\rho_{k}-1}{\rho_{k} }\{\langle p^k-p^{**}, -B^*(\bar \lambda^k -\bar \lambda^{**}) \rangle  -\langle p^k-p^*, -B^*(\bar \lambda^k -\bar \lambda^*) \rangle\}\\
 & + \frac{1}{r\rho_k} \langle \bar \lambda^k, \bar \lambda^{**} - \bar \lambda^* \rangle = d_k(x^{k},x^{*}) - d_k(x^{k},x^{**}) -\frac{r\|Bp^*\|^2}{2\rho_k}+\frac{r\|Bp^{**}\|^2}{2\rho_k} -\frac{\|\bar \lambda^*\|}{2 \rho_k r}+\frac{\|\bar \lambda^{**}\|}{2 \rho_k r}.
\end{aligned}
\end{equation}
In order to avoiding the subtle issue of the convergence properties of the inner product of two weakly convergent sequences, we need to rewrite the inner product terms,
\begin{equation}\label{eq:inequality:relax:lambda:rho2}
\begin{aligned}
 &\langle p^k-p^{**}, -B^*(\bar \lambda^k -\bar \lambda^{**}) \rangle
  -\langle p^k-p^*, -B^*(\bar \lambda^k -\bar \lambda^*) \rangle \\
 & =  \langle p^k- p^* + p^* - p^{**}, -B^*(\bar \lambda^k -\bar \lambda^{**}) \rangle
 -\langle p^k-p^*, -B^*(\bar \lambda^k -\bar \lambda^*) \rangle \\
 & =    \langle p^k- p^*, -B^*(\bar \lambda^* -\bar \lambda^{**}) \rangle + \langle p^*-p^{**}, -B^*(\bar \lambda^{k} -\bar \lambda^{**}) \rangle.
\end{aligned}
\end{equation}
Substituting \eqref{eq:inequality:relax:lambda:rho2} into \eqref{eq:relax:limit:unique:primal}, taking $x^{k_i}$ and $x^{k_j}$ instead of $x^{k}$ separately in the left hand side of \eqref{eq:relax:limit:unique:primal}, and remembering the right hand side of \eqref{eq:relax:limit:unique:primal} only has a unique limit according to \eqref{eq:inequality:relax:rho2}, together with notation $C^* = (2-\rho^*)/\rho^*$, we have
\begin{equation}
\begin{aligned}
&0 = \frac{r}{\rho^*} \|B(p^*-p^{**})\|^2 + \frac{1}{r \rho^*} \|\bar \lambda^* -\bar \lambda^{**}\|^2 + \frac{2(\rho^*-1)}{\rho^*}
\langle p^* -  p^{**}, B^*(\bar \lambda^* -\bar \lambda^{**}) \rangle  \\
&=\frac{\rho^*-1}{\rho^*}\|\sqrt{r}B(p^*-p^{**}) + \frac{1}{\sqrt{r}}(\bar \lambda^* - \bar \lambda^{**})\|^2 + rC^*\|B(p^*-p^{**})\|^2  + C^*\frac{1}{r}\|\bar \lambda^*-\bar \lambda^{**}\|^2. \notag
\end{aligned}
\end{equation}
Since $C^* >0$ while $\rho^* \in [1,2)$, what follows are $Bp^* = Bp^{**}$ and $\bar \lambda^*=\bar \lambda^{**}$.
The remaining proof is similar to the case for $\rho_k \in (0,1)$. Thus we get the uniqueness of the weak limits of all weakly convergent subsequences, which leads to the weak convergence of the iteration sequence $x^k$ produced by \eqref{admm:relax:sys:final} while $\rho_{k} \in[1,2)$.
\end{proof}
With these preparations, we could give the following ergodic convergence rate.
\begin{theorem}
  \label{thm:douglas-rachford-ergodic-rate-relax}
Supposing $x^*=(u^*, p^*, \bar \lambda^*)$ is a saddle-point of \eqref{eq:saddle-point-prob},
with assumption \eqref{eq:assumption:admm},
  then the following ergodic sequences with $(u^k, p^k, \bar \lambda^k)$ produced by iterations \eqref{admm:relax:sys:final},
  \begin{equation}
    u_{\erg}^k = \frac1k \sum_{k' = 1}^{k} u^{k'},\ \ \  p_{\erg}^k = \frac1k \sum_{k' = 1}^{k} p^{k'}, \quad
    \bar \lambda_{\erg}^k = \frac1k \sum_{k' = 1}^{k} \bar \lambda^{k'},
  \end{equation}
  converge weakly to $u^*$, $p^*$ and $\bar  \lambda^*$, respectively.
  Besides, the ``partial" primal-dual gap obeys $\gap_{x^*}(x_{\erg}) \geq 0$ where $x_{\erg}^k : = (u_{\erg}^k, p_{\erg}^k, \bar \lambda_{\erg}^k)$ and $\gap_{x^*}(x_{\erg})  = \mO(\frac{1}{k})$. In detail, for $0 < \rho_k < 1$,
  \begin{equation}
 \gap_{x^*}(x_{\erg}) \leq \frac 1k
    \
    \Bigl[
   \frac{r\norm{ B(p^{0} - p^*)}^2}{2}
    + \frac{\norm{\bar \lambda^{0} - \lambda^*}^2}{2r}+ \frac{1-\rho_{0}}{2\rho_{0}r} \|v^{0} -v^*\|^2  - \frac{2-\rho_{0}}{\rho_{0}}\langle p^{0} - p^{1}, -B^*(\bar \lambda^{0} -\bar \lambda^{1})\rangle
    \Bigr], \notag
  \end{equation}
where $v^* =  -rBp^* + \bar \lambda^*$, and for $\rho_k  \in [1,2)$, we have
    \begin{align}
\gap_{x^* }(x_{\erg}^k) &\leq \frac {1}{k\rho_0}
     \
    \Bigl[
 \frac{(2-\rho_0)r}{2}\|B(p^{0}-p^*)\|^2 +  \frac{2-\rho_0}{2r}\|\bar \lambda^{0} -\bar \lambda^* \|^2 \\
     &  + \frac{\rho_0-1}{2}\|\sqrt{r}B(p^0-p) + (\bar \lambda^0 - \bar \lambda)/\sqrt{r} \|^2- (2-\rho_{0})\langle p^{0} - p^{1}, -B^*( \lambda^{0} -\bar \lambda^{1})\rangle
    \Bigr]= \mO(\frac{1}{k}). \notag
  \end{align}
\end{theorem}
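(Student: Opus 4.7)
The plan is to combine convexity of the partial gap with the one-step estimates of Corollary~\ref{cor:relax:partial} and telescope. Weak convergence of the ergodic averages $(u^k_{\erg}, p^k_{\erg}, \bar\lambda^k_{\erg})$ to $(u^*, p^*, \bar\lambda^*)$ is automatic from Theorem~\ref{thm:gap:estimate:pre:u:weak} by Ces\`aro averaging of weakly convergent sequences; non-negativity $\gap_{x^*}(x_{\erg}^k)\geq 0$ is the saddle-point inequality $\mL(u,p,\bar\lambda^*)\geq \mL(u^*,p^*,\bar\lambda^*)\geq \mL(u^*,p^*,\bar\lambda)$ evaluated at $(u,p,\bar\lambda)=x_{\erg}^k$.

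For the rate, observe that $\mL$ is convex in $(u,p)$ and affine in $\bar\lambda$, so $x\mapsto \gap_{x^*}(x)$ is convex, and Jensen yields
\[
\gap_{x^*}(x_{\erg}^k)\;\leq\;\frac{1}{k}\sum_{k'=1}^{k}\gap_{x^*}(x^{k'}).
\]
Everything reduces to bounding this sum. In the case $\rho_k\in(0,1)$, I would sum estimate~\eqref{eq:gap:estimate:relaxed:rho1:origin} with $x=x^*$ from $k'=0$ to $k-1$. The first two brackets have constant coefficients $r/2$ and $1/(2r)$ and telescope directly. The $v$-bracket has coefficient $\frac{1-\rho_{k'}}{2\rho_{k'}r}$, which is non-increasing because $\rho_{k'}$ is non-decreasing by~\eqref{eq:assum:relaxation}; a single Abel summation bounds this sum by its $k'=0$ value $\frac{1-\rho_0}{2\rho_0 r}\|v^0-v^*\|^2$. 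The quadratic residuals on the last line of~\eqref{eq:gap:estimate:relaxed:rho1:origin} are non-positive and dropped, and the mixed cross terms $-\frac{2-\rho_{k'}}{\rho_{k'}}\langle -B(p^{k'}-p^{k'+1}),\bar\lambda^{k'}-\bar\lambda^{k'+1}\rangle$ are $\leq 0$ for every $k'\geq 1$ by~\eqref{eq:estimate:relax:rho:ine:inclu:g}, leaving only the single boundary contribution at $k'=0$ that appears in the stated bound.

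For $\rho_k\in[1,2)$, I would run the same strategy off~\eqref{ieq:relax:rho:2} organized around the energy $d_k(x^k,x^*)$ of~\eqref{eq:inequality:relax:rho2:trans}. The first three brackets of~\eqref{ieq:relax:rho:2} combine into $d_k(x^k,x^*)-\tilde d_{k+1}(x^{k+1},x^*)$, where $\tilde d_{k+1}$ differs from $d_{k+1}$ only by using $1/\rho_k$ in place of $1/\rho_{k+1}$; a short direct calculation gives
\[
\tilde d_{k+1}-d_{k+1}\;=\;\tfrac{1}{2}\Bigl(\tfrac{1}{\rho_k}-\tfrac{1}{\rho_{k+1}}\Bigr)\,\bigl\|\sqrt{r}\,B(p^{k+1}-p^*)-(\bar\lambda^{k+1}-\bar\lambda^*)/\sqrt{r}\bigr\|^2\;\geq\;0,
\]
so one recovers the clean telescope $d_k-\tilde d_{k+1}\leq d_k-d_{k+1}$. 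After discarding the non-positive quadratic residuals, dropping the mixed inner-product term for $k'\geq 1$ as before, and noting that $d_k(x^k,x^*)\geq 0$ (its quadratic form has positive determinant $(2-\rho_k)\rho_k/4$ throughout $[1,2)$), the summation collapses to $\sum_{k'=0}^{k-1}\gap_{x^*}(x^{k'+1})\leq d_0(x^0,x^*)-\frac{2-\rho_0}{\rho_0}\langle -B(p^0-p^1),\bar\lambda^0-\bar\lambda^1\rangle$; expanding $d_0(x^0,x^*)$ as the completion of a square $\frac{\rho_0-1}{2}\|\sqrt{r}B(p^0-p^*)+(\bar\lambda^0-\bar\lambda^*)/\sqrt{r}\|^2$ plus the leftover $\frac{(2-\rho_0)r}{2\rho_0}\|B(p^0-p^*)\|^2+\frac{2-\rho_0}{2r\rho_0}\|\bar\lambda^0-\bar\lambda^*\|^2$ produces precisely the form in the theorem. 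The main obstacle is this bookkeeping: the mismatched monotonicities of $\frac{2-\rho_k}{2\rho_k}$ and $\frac{\rho_k-1}{2\rho_k}$ forbid a naive telescoping from~\eqref{eq:gap:estimate:relaxed:rho2:origin}, and one must verify that the reorganization into~\eqref{ieq:relax:rho:2} together with the comparison $\tilde d_{k+1}\geq d_{k+1}$ actually closes the gap between consecutive coefficients.
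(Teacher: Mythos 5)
Your proposal is correct and takes essentially the same route as the paper's proof: Stolz--Ces\`aro (Ces\`aro averaging) for the weak convergence of the ergodic sequences, non-negativity from the saddle-point inequality, convexity of $(u',p',\bar\lambda')\mapsto \mL(u',p',\bar\lambda^*)-\mL(u^*,p^*,\bar\lambda')$ plus Jensen, and then telescoping the one-step estimate \eqref{eq:gap:estimate:relaxed:rho1:origin} for $\rho_k\in(0,1)$, respectively \eqref{ieq:relax:rho:2} organized around the energy $d_k$ of \eqref{eq:inequality:relax:rho2:trans} for $\rho_k\in[1,2)$, while discarding the non-positive residuals and the cross terms for $k'\geq 1$ via \eqref{eq:estimate:relax:rho:ine:inclu:g}. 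Your explicit comparison $\tilde d_{k+1}-d_{k+1}=\tfrac12\bigl(\tfrac{1}{\rho_k}-\tfrac{1}{\rho_{k+1}}\bigr)\bigl\|\sqrt{r}B(p^{k+1}-p^*)-(\bar\lambda^{k+1}-\bar\lambda^*)/\sqrt{r}\bigr\|^2\geq 0$ is just a cleaner, more explicit restatement of the cancellation the paper carries out through \eqref{eq:inequality:relax:rho2:trans}, \eqref{eq:fight:back} and the monotonicity of $\rho_k$, so the two arguments coincide up to bookkeeping (your two slips---the missing $1/\rho_0$ on the completed square when expanding $d_0$, and the determinant constant---do not affect the conclusion).
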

\begin{proof}
  First of all, by Theorem \ref{thm:gap:estimate:pre:u:weak}, $(u^k,p^k, \bar \lambda^k)$ converges
  weakly to a solution $(u^*,p^*, \bar \lambda^*)$
  of~\eqref{eq:saddle-point-prob} and  $(u^k, p^{k},\bar \lambda^k)$ converges weakly to a fixed point of \eqref{admm:original:sys:final}.
  To obtain the weak convergence of $\seq{u_{\erg}^k}$, we test with
  an $u$ and utilize the Stolz--Ces\`aro theorem to obtain
  \[
  \lim_{k \to \infty}  \scp{u_{\erg}^k}{x} = \lim_{k \to \infty}
  \frac{\sum_{k'=1}^k \scp{u^{k'}}{x}}{\sum_{k' = 1}^k 1} =
  \lim_{k \to \infty} \scp{u^k}{x} = \scp{u^*}{x}.
  \]
  The property $\wlim \bar \lambda_{\erg}^k = \bar \lambda^*$,  $\wlim  p_{\erg}^k = p^*$ can be proven analogously.

  Finally, in order to show the estimate on
  $\gap_{x^*}(u^k_{\erg}, p^k_{\erg}, \bar  \lambda^k_{\erg})$, we take $\rho_k \in [1,2)$ for example. Observing that the function
  $(u',p', \bar \lambda') \mapsto \mL(x',p',\bar \lambda^*) - \mL(x^*,y^*, \bar \lambda')$ is convex, together with the definition of saddle-points of \eqref{eq:saddle-point-prob}, \eqref{eq:estimate:relax:rho:ine:inclu:g}, \eqref{ieq:relax:rho:2}, \eqref{eq:inequality:relax:rho2:trans} and \eqref{eq:inequality:relax:rho2}, we have
%~\eqref{eq:douglas-rachford-lagrangian-estimate} gives
  \begin{align*}
    &0 \leq \mL(u^k_{\erg}, p^k_{\erg}, \bar  \lambda^*) - \mL(u^*, p^*, \bar  \lambda^k_{\erg})
    \leq \frac1k   \sum_{k'=0}^{k-1} \mL(u^{k'+1}, p^{k'+1}, \bar  \lambda^*) - \mL(u^*, p^*, \bar  \lambda^{k'+1}) \\
    &\leq \frac1k   \Bigl\{\sum_{k' = 0}^{k-1} d_k(x^{k},x) - d_{k+1}(x^{k+1},x)
      - \sum_{k'=0}^{k-1} \frac{2-\rho_{k}}{\rho_{k}}\langle p^{k} - p^{k+1}, -B^*(\bar \lambda^{k} - \bar \lambda^{k+1}) \rangle   \Bigr\} \\
   & \leq \frac1k
      \Bigl\{  d_0(x^{0},x) - \frac{2-\rho_{0}}{\rho_{0}}\langle p^{0} - p^{1}, -B^*(\bar \lambda^{0} - \bar \lambda^{1}) \rangle \Bigr\}.
  \end{align*}
For $\rho_k \in (0,1)$, by \eqref{eq:gap:estimate:relaxed:rho1:origin} and \eqref{eq:estimate:relax:rho:ine:inclu:g}, we also get the corresponding estimate.
\end{proof}
Actually, if linear operators $A$, $B$ and the function $G$ have similar multiple separable structures, we can assign each separable component with different relaxation parameter as stated in the following corollary, which is more flexible in applications.
\begin{corollary}\label{cor:overrelaxation:multiple:parameters}
 From Theorem \ref{thm:gap:estimate:pre:u} and \ref{thm:gap:estimate:pre:u:weak}, noticing that if both $A$ and $B$ have multiple components and $G$ has multiple separable components, i.e.,
\[
  A=(A_{1}, A_{2}, \cdots, A_{L})^{T}, \quad B=(B_{1}, B_{2}, \cdots, B_{L})^{T},  \quad A_{i}u + B_{i}p_{i} = c_{i}, \quad i = 1,2, \cdots, L,
\]
 where $c = (c_1, \cdots, c_{L})^{T}$ and $G(p_1, \cdots,p_L) = \sum_{i=1}^{L}G_{i}(p_{i})$ as in \eqref{eq:saddle-point-prob},
% The original minimization problems becomes,
% \begin{equation}\label{eq:saddle-point-prob:multi}
% \min_{u \in X, p \in Y} \sup_{\lambda \in Z} F(u) + G(p) +  \langle \lambda, Au+Bp-c \rangle,
% \end{equation}
the relaxed ADMM becomes
 \begin{equation} \label{admm:relax:sys:final:multi:R}
    \left\{
    \begin{aligned}
       u^{k+1}
      &= (rA^*A + \partial F)^{-1}[A^*(-rBp^k +rc - \bar{\lambda}^k) ] = (rA^*A + \partial F)^{-1}[\sum_{i=1}^L A_{i}^*(-rB_{i}p_{i}^k + rc_{i} -\bar \lambda_{i}^k)], \\
p_{1}^{k+1}  &= (r B_{1}^*B_{1} + \partial G_{1})^{-1}[B_{1}^*( -r \rho_{k}^{1} A_{1}u^{k+1} + r(1-\rho_{k}^{1})B_1p_{1}^{k} - \bar{\lambda}_{1}^{k} +r\rho_{k}^{1}c_{1})],
      \\
      &\cdots \\
     p_{L}^{k+1}  &= (r B_{L}^*B_{L} + \partial G_{L})^{-1}[B_{L}^*( -r \rho_{k}^{L} A_{L}u^{k+1} + r(1-\rho_{k}^{L})B_Lp_{L}^{k} - \bar{\lambda}_{L}^{k} +r\rho_{k}^{L}c_{L})],  \\
      \bar{\lambda}_{1}^{k+1} &= \bar{\lambda}_{1}^{k} + r(\rho_{k}^{1} A_{1}u^{k+1} + B_{1}p_{1}^{k+1}-(1-\rho_{k}^{1})B_{1}p_{1}^{k}-\rho_{k}^{1}c_{1}), \\
    &  \cdots \\
  \bar{\lambda}_{L}^{k+1} &= \bar{\lambda}_{L}^{k} + r(\rho_{k}^{L} A_{L}u^{k+1} + B_{L}p_{L}^{k+1}-(1-\rho_{k}^{L})B_{L}p_{L}^{k}-\rho_{k}^{L}c_{L}),
      % \label{PADMM:lambda}
    \end{aligned}
    \right.
  \end{equation}
 by giving each component of $ p_{i}$ or $\bar \lambda_{i}$ relaxation parameter $\rho_{k}^{i}$ satisfying \eqref{eq:assum:relaxation}. Let us define
 \[
  \bar \lambda_{o,i}^{k+1} := (\bar \lambda_{i}^{k+1} - \bar \lambda_{i}^{k})/{\rho_{k}^i} + \bar \lambda_{i}^{k}, \quad  p_{o,i}^{k+1} := (p_{i}^{k+1} - p_{i}^{k})/\rho_{k}^i + p_{i}^{k}.
 \]
and denote $v_{i}^k$, $w_{i}^k$, $v_{o,i}^k$, $w_{o,i}^k$, $v_{i}$ and $w_{i}$ as each component of $v^k$, $w^k$, $v_{o}^k$, $w_{o}^k$, $v$ and $w$ respectively as defined in \eqref{def:v:w}, $i = 1,2, \cdots, L$. Assuming $A_iu_i + B_ip_i = c_i$, for $k \geq 0$, while $0< \rho_{k}^{l} <1$, for iteration \eqref{admm:relax:sys:final:multi:R}, we have
\begin{equation}%
   \begin{aligned}
  \gap_{x}(x^{k+1})  & \leq
\sum_{l=1}^{L}\bigl \{\rho_{k}^{l} r \langle B_{l}(p_{l}^{k} - p_{o,l}^{k+1}), B_{l}(p_{o,l}^{k+1} - p_{l}) \rangle +  \frac{\rho_{k}^{l}}{r}\langle \bar \lambda_{l}^{k} -\bar \lambda_{o,l}^{k+1}, \bar \lambda_{o,l}^{k+1} - \bar \lambda_{l} \rangle  \\
       &+ \frac{1-\rho_{k}^{l}}{r}\langle v_{l}^{k} -v_{o,l}^{k+1}, v_{o,l}^{k+1} -v_{l} \rangle + \rho_{k}^{l}(2-\rho_{k}^{l})\langle B_{l}(p_{l}^{k} - p_{o,l}^{k+1}), \bar \lambda_{l}^{k} -\bar \lambda_{o,l}^{k+1}\rangle \bigr\},
  \end{aligned}
\end{equation}
and while $1 \leq \rho_{k}^{l} <2$, we have
\begin{equation}
   \begin{aligned}
  \gap_{x }(x^{k+1})  & \leq \sum_{l=1}^{L}\bigl \{
(2-\rho_{k}^{l}) r \langle B_{l}(p_{l}^{k} - p_{o,l}^{k+1}), B_{l}(p_{o,l}^{k+1} - p_{l}) \rangle +  \frac{2-\rho_{k}^{l}}{r}\langle \bar \lambda_{l}^{k} -\bar \lambda_{o,l}^{k+1}, \bar \lambda_{o,l}^{k+1} - \bar \lambda_{l} \rangle \\
       &+\frac{\rho_{k}^{l}-1}{r} \langle w_{l}^{k} -w_{o,l}^{k+1}, w_{o,l}^{k+1} -w_{l} \rangle + \rho_{k}^{l}(2-\rho_{k}^{l})\langle B_{l}(p_{l}^{k} - p_{o,l}^{k+1}), \bar \lambda_{l}^{k} -\bar \lambda_{o,l}^{k+1}\rangle \bigr \}. \notag
  \end{aligned}
 \end{equation}
With these estimats, it could be checked that the results of Theorem  \ref{thm:gap:estimate:pre:u:weak} and \ref{thm:douglas-rachford-ergodic-rate-relax} remain valid with minor adjustments.
%\end{equation}
\end{corollary}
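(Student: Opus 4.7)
The plan is to exploit the separability of $G$ and the block structure of $B$: since $G(p)=\sum_{l=1}^L G_l(p_l)$ and each $p_l$ is multiplied only by $B_l$ in the constraint, the $p$-subproblem of the ADMM decouples into $L$ independent minimizations, which is precisely what allows the introduction of distinct relaxation parameters $\rho_k^l$ without breaking the analysis of the previous sections. Accordingly, the entire chain of arguments in Theorem \ref{thm:gap:estimate:pre:u}, Theorem \ref{thm:gap:estimate:pre:u:weak} and Theorem \ref{thm:douglas-rachford-ergodic-rate-relax} will be redone blockwise, and the results assembled by summation over $l$.

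The first step is to decompose the partial gap. Writing $Au+Bp-c=\sum_l (A_lu+B_lp_l-c_l)$, the Lagrangian $\mL$ splits into a sum of block Lagrangians that share the common primal variable $u$, so that $\gap_{x}(x^{k+1})$ becomes a sum of block terms. Next, I would run the proof of Theorem \ref{thm:gap:estimate:pre:u} componentwise. The only place where $p^{k+1}$ and $\bar\lambda^{k+1}$ are manipulated via the $p$-subproblem is through the inclusion $-B^*\bar\lambda^{k+1}\in\partial G(p^{k+1})$ obtained in \eqref{eq:relax:p:subgradient}; under separability, this is equivalent to the block inclusions $-B_l^*\bar\lambda_l^{k+1}\in\partial G_l(p_l^{k+1})$ for each $l$, each derived from the $l$-th line of \eqref{admm:relax:sys:final:multi:R} and the corresponding $\bar\lambda_l^{k+1}$ update. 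The quantities I,II in the proof of Theorem \ref{thm:gap:estimate:pre:u} then split into sums over $l$ that involve only $p_l$, $B_l$, $\bar\lambda_l$, $\rho_k^l$, $p_{o,l}^{k+1}$, $\bar\lambda_{o,l}^{k+1}$, $v_l$ and $w_l$, and the identity \eqref{pre:admm:drway:classical:rewrite} also decomposes into $L$ blockwise identities $\bar\lambda_{o,l}^{k+1}=\bar\lambda_l^{k}+r(A_lu^{k+1}+B_lp_{o,l}^{k+1}-c_l)$. Applying \eqref{eq:expan:v} or \eqref{eq:expan:w} blockwise according to whether $\rho_k^l<1$ or $\rho_k^l\ge 1$ yields the two claimed estimates.

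For the analogues of Theorem \ref{thm:gap:estimate:pre:u:weak} and Theorem \ref{thm:douglas-rachford-ergodic-rate-relax}, I would replace the single Lyapunov quantities $d_{k,\rho_k}$ and $d_k(x^k,x)$ by the sums $\sum_{l=1}^L d_{k,\rho_k^l}^{(l)}$ and $\sum_{l=1}^L d_k^{(l)}(x^k,x)$, where each summand has exactly the scalar form used in Theorem \ref{thm:gap:estimate:pre:u:weak} but involves only the $l$-th block. Because the assumptions \eqref{eq:assum:relaxation} are imposed componentwise, the key monotonicity facts used there (non-decrease of $\frac{1-\rho_k^l}{2\rho_k^l r}$ when $\rho_k^l\in(0,1)$, non-decrease of $\rho_k^l$ used in the telescoping when $\rho_k^l\in[1,2)$, and $\rho^{*,l}<2$) hold in each block, so the telescoping, boundedness and identification-of-weak-limit arguments carry over verbatim. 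Lemma \ref{lem:admm-fixed-points:rela} extends without change since its conclusions decouple into the block inclusions in \eqref{eq:fixed:con}, and uniqueness of the weak limit follows from the blockwise analogues of the identities \eqref{eq:relax:limit:unique:primal} and \eqref{eq:inequality:relax:lambda:rho2}, together with Lipschitz continuity of $(rA^*A+\partial F)^{-1}$ and $(rB_l^*B_l+\partial G_l)^{-1}$. The ergodic rate then follows from averaging the blockwise one-step inequalities, since averaging commutes with summation over $l$.

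The main obstacle is purely bookkeeping: one must handle mixed regimes in which some blocks satisfy $\rho_k^l\in(0,1)$ and others $\rho_k^l\in[1,2)$, so that the per-block gap uses a different Lyapunov functional on different blocks. Separability is what makes this harmless, because the summation over $l$ has no cross-block terms, and the signs of the surviving quadratic residuals (the $\|B_l(p_l^k-p_l^{k+1})\|^2$, $\|\bar\lambda_l^k-\bar\lambda_l^{k+1}\|^2$, $\|v_l^k-v_l^{k+1}\|^2$ and $\|w_l^k-w_l^{k+1}\|^2$ terms) are correct in each regime. Once this observation is made, the proofs reduce to the scalar proofs applied $L$ times, yielding exactly the stated estimates and the claimed weak convergence and $\mO(1/k)$ ergodic rate.
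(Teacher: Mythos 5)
Your proposal is correct and takes essentially the same approach the paper intends: the paper states this corollary without a separate proof, treating it as immediate from running the arguments of Theorem \ref{thm:gap:estimate:pre:u}, Theorem \ref{thm:gap:estimate:pre:u:weak} and Theorem \ref{thm:douglas-rachford-ergodic-rate-relax} blockwise (the blockwise inclusions $-B_l^*\bar\lambda_l^{k+1}\in\partial G_l(p_l^{k+1})$, the blockwise identity $\bar\lambda_{o,l}^{k+1}=\bar\lambda_l^{k}+r(A_lu^{k+1}+B_lp_{o,l}^{k+1}-c_l)$, and summation over $l$), which is exactly what you do. Your additional observation that mixed regimes (some $\rho_k^l\in(0,1)$, others in $[1,2)$) can be handled by choosing the $v$-form or $w$-form Lyapunov term per block is a harmless and correct refinement beyond the two pure cases stated in the corollary.
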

\section{Relaxed ADMM and Douglas-Rachford splitting method}\label{sec:pre:admm:relax}
\subsection{Preconditioned ADMM with relaxation}
Let $p=(p_{1}, p_{2})^{T} \in \bar{Y} = Y \times Y_{2}$, $\tilde A = (A, S)^{T}$ with $\tilde A u = (Au, Su)^{T}$, $\tilde{B}p = (Bp_{1}, -p_{2})^{T}$, $\tilde c = (c,0)^{T}$, and $\tilde{G}(p) = G(p_{1}) +I_{\{0\}}^*(p_2) $ with $I_{\{0\}}^*$ being the conjugate of the indicator function $I_{\{0\}}$. Then the original model \eqref{saddle:primal:original} could be reformulated as
 \begin{equation}\label{saddle:primal}
 \min_{u \in X, p \in \bar Y} F(u) + \tilde{G}(p), \quad \text{subject to} \quad \tilde Au +\tilde{B} p = \tilde c,
 \end{equation}
 where $u$ is a block and $p$ is another block.
 We first write the augmented Lagrangian,
 \begin{equation}\label{augmented_lagrangian:modified}
  \tilde L_{r}(u, p, \bar \lambda): = F(u) + \tilde{G}(p) + \langle \bar{\lambda}, \tilde Au+ \tilde Bp- \tilde c\rangle + \frac{r}{2}\|\tilde Au + \tilde B p- \tilde c \|^2,
\end{equation}
where $\bar{\lambda} = (\bar{\lambda}_{1} , \bar{\lambda}_{2})^{T} \in \bar Z  =Z \times Z_2$.
Actually, by the definition of the Fenchel conjugate \cite{HBPL,KK},  \eqref{saddle:primal} is equivalent to the following saddle-point problem and the dual problem,
 \begin{equation}
  \label{eq:precon-saddle-point:M}
\min_{u \in X, p \in \bar Y} \max_{\bar \lambda \in \bar Z} F(u) + \tilde G(p)+ \langle \bar \lambda, \tilde Au + \tilde B p- \tilde c \rangle  \quad \Leftrightarrow \quad  \min_{\bar \lambda\in \bar Z} F^*(-\tilde A^* \bar \lambda)+ \tilde G^*(-\tilde B^* \bar \lambda) + \langle \tilde  c,  \bar \lambda \rangle,
\end{equation}
where $\tilde{G}^*(\bar \lambda) = G^*(\bar \lambda_{1}) + I_{\{0\}}(\bar \lambda_{2})$. \eqref{eq:precon-saddle-point:M} has a saddle-point solution $(u^*, p^*, \bar \lambda_1^*,0)$ with $(u^*,p_{1}^*, \bar \lambda_{1}^*)$ is a saddle-point solution of the original saddle-point problem \eqref{eq:saddle-point-prob}, through checking the optimality conditions and constraint of both two problems in \eqref{eq:precon-saddle-point:M}.
\begin{proposition}
In the case of $M=rB^*B$, \eqref{admm:relax:sys:full:final} could be recovered by applying Corollary \ref{cor:overrelaxation:multiple:parameters} to the modified constrained problem \eqref{saddle:primal}.
\end{proposition}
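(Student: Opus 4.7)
The plan is to realise the effective proximal shift $N - rA^*A$ appearing in the $u$-update of \eqref{admm:relax:sys:full:final} as the normal-equation contribution $rS^*S$ of the auxiliary linear constraint $Su = p_{2}$ in \eqref{saddle:primal}, so that the scheme from Corollary~\ref{cor:overrelaxation:multiple:parameters} applied to the two-block reformulation \eqref{saddle:primal} reduces, after eliminating the auxiliary block, to \eqref{admm:relax:sys:full:final} with $M = rB^*B$. Since $N - rA^*A$ is bounded, self-adjoint and non-negative by \eqref{eq:assume:M:N}, it admits a unique positive self-adjoint square root by the continuous functional calculus; I would pick $S \in \mL(X,X)$ with $rS^*S = N - rA^*A$ and set $Y_{2} = Z_{2} = X$. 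With this choice, $r\tilde A^*\tilde A = rA^*A + rS^*S = N$, and the Fenchel conjugate identity $I_{\{0\}}^* \equiv 0$ gives $\partial \tilde G_{2} = \{0\}$.

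Next I would apply Corollary~\ref{cor:overrelaxation:multiple:parameters} to \eqref{saddle:primal} with $L = 2$, component relaxation parameters $\rho_{k}^{1} = \rho_{k}$ and $\rho_{k}^{2} \equiv 1$ (which trivially satisfies \eqref{eq:assum:relaxation} with limit $1 < 2$), and initialisation $p_{2}^{0} = Su^{0}$, $\bar \lambda_{2}^{0} = 0$. With $A_{2} = S$, $B_{2} = -I$, $c_{2} = 0$ and $\partial \tilde G_{2} = \{0\}$, the second block of the $p$-update in \eqref{admm:relax:sys:final:multi:R} collapses to
\[
p_{2}^{k+1} = (rI)^{-1}\bigl[rSu^{k+1} + \bar \lambda_{2}^{k}\bigr] = Su^{k+1} + \bar \lambda_{2}^{k}/r,
\]
and the corresponding multiplier update reduces to $\bar \lambda_{2}^{k+1} = \bar \lambda_{2}^{k} + r(Su^{k+1} - p_{2}^{k+1}) = 0$.

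A straightforward induction then gives $\bar \lambda_{2}^{k} = 0$ and $p_{2}^{k} = Su^{k}$ for every $k \geq 0$. Substituting this into the common $u$-update produced by the corollary,
\[
u^{k+1} = (rA^*A + rS^*S + \partial F)^{-1}\bigl[A^*(-rBp_{1}^{k} + rc - \bar \lambda_{1}^{k}) + S^*(rp_{2}^{k} - \bar \lambda_{2}^{k})\bigr],
\]
turns the term $S^*(rp_{2}^{k} - \bar \lambda_{2}^{k})$ into $rS^*Su^{k} = (N - rA^*A)u^{k}$ and replaces $rA^*A + rS^*S$ by $N$, exactly reproducing the $u$-update of \eqref{admm:relax:sys:full:final}. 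The first block $(p_{1}, \bar \lambda_{1})$ of the corollary's scheme then coincides verbatim with the $(p, \bar \lambda)$-updates of \eqref{admm:relax:sys:full:final} in the case $M = rB^*B$, since the $M - rB^*B$ proximal contribution vanishes.

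The only genuinely non-trivial point will be the construction of $S$: in infinite dimensions I would invoke the continuous functional calculus for bounded positive self-adjoint operators to take the square root of $(N - rA^*A)/r$. After that, together with the consistent initialisation that keeps the auxiliary block inert from step zero on, the proposition is an algebraic bookkeeping identification and no new convergence input beyond Corollary~\ref{cor:overrelaxation:multiple:parameters} is needed.
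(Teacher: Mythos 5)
Your proposal is correct and follows essentially the same route as the paper's own proof: apply Corollary~\ref{cor:overrelaxation:multiple:parameters} to the modified problem \eqref{saddle:primal} with $\rho_k^2 \equiv 1$, observe that $\bar\lambda_2^{k+1} \equiv 0$ and $p_2^k = Su^k$, and eliminate the auxiliary block to arrive at \eqref{pre:admm:sys:referee:relax:final} with $N = rA^*A + rS^*S$, i.e.\ \eqref{admm:relax:sys:full:final} with $M = rB^*B$. Your explicit construction of $S$ as the square root of $(N - rA^*A)/r$ (needed for the ``recover'' direction) and your consistent initialisation $p_2^0 = Su^0$, $\bar\lambda_2^0 = 0$ with induction are careful refinements that the paper leaves implicit (the square-root argument appears in the paper only later, in the proof of Corollary~\ref{cor:fully:relax:F}), but they do not change the substance of the argument.
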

\begin{proof}
Applying Corollary \eqref{cor:overrelaxation:multiple:parameters} to the modified constrained problem \eqref{saddle:primal}, we have
 \begin{equation}
    \label{pre:admm:sys:referee:relax}
    \left\{
    \begin{aligned}
      u^{k+1}
      &= (rA^*A + r S^*S + \partial F)^{-1}[A^*(-rBp_{1}^k+rc - \bar{\lambda}_{1}^k) + rS^*Su^{k}] ,
      % u^{k+1} &= u^{k} + M^{-1}[K^*(r p^{k} - \lambda^{k}) + f_{0} - (S + r K^*K)u^{k}], %\label{PADMM:u}
      \\
      % \text{or} \ \ u^{k+1} &= u^{k} + M^{-1}[(K^*(r p^{k} - \lambda^{k} + u^{k}) + f_{0}) - (I+ S + r K^*K)u^{k}],\\
      p_{1}^{k+1}  &= (r B^*B + \partial G)^{-1}[B^*(-\bar{\lambda}_{1}^{k}  -r \rho_{k}^{1}A u^{k+1}+ r(1-\rho_{k}^{1})Bp_{1}^{k}+\rho_{k}^{1}c)],
      \\
      p_{2}^{k+1} & = (r I + \partial I_{\{0\}}^* )^{-1}[\bar{\lambda}_{2}^{k} + r (\rho_{k}^{2}S u^{k+1}+(1-\rho_{k}^{2})p_{2}^{k})], \\
      \bar{\lambda}_{1}^{k+1} &= \bar{\lambda}_{1}^{k} + r[\rho_{k}^{1}A u^{k+1}- (1-\rho_{k}^{1})Bp_{1}^{k} +B p_{1}^{k+1}-\rho_{k}^{1}c], \\
      \bar{\lambda}_{2}^{k+1} &= \bar{\lambda}_{2}^{k} + r[\rho_{k}^{2}S u^{k+1}+(1-\rho_{k}^{2})p_{2}^{k} - p_{2}^{k+1}].
      % \label{PADMM:lambda}
    \end{aligned}
    \right.
  \end{equation}
By the Moreau identity \cite{CP}, for the update of $p_2^{k+1}$ in \eqref{pre:admm:sys:referee:relax}, we have
\begin{equation}\label{eq:p:M:modi:UP}
rp_{2}^{k+1} = \bar{\lambda}_{2}^{k} + r (\rho_{k}^{2}S u^{k+1}+(1-\rho_{k}^{2})p_{2}^{k}).
\end{equation}
 Together with the update of $\bar{\lambda}_{2}^{k+1}$, we have
\begin{equation}\label{eq:second:lambda:relax}
\bar{\lambda}_{2}^{k+1} \equiv 0.
\end{equation}  If setting $\rho_{k}^{2} \equiv 1$, for the update of $p_{2}^{k+1}$, we have
  \[
  rp_{2}^{k+1} = \bar{\lambda}_{2}^{k} + r S u^{k+1}.
  \]
What follows is
\begin{equation}\label{eq:second:p:relax}
  p_{2}^{k+1} =  S u^{k+1}\ \ \Rightarrow \ \  p_{2}^{k} =  S u^{k}.
\end{equation}
Substituting \eqref{eq:second:p:relax} into the update of $u^{k+1}$ in \eqref{pre:admm:sys:referee:relax}, we get the relaxed and preconditioned ADMM as follows, with notation $N = rA^*A + rS^*S$:
  \begin{equation}
    \label{pre:admm:sys:referee:relax:final} \tag{rpADMM}
    \left\{
    \begin{aligned}
 u^{k+1}
      &= (N + \partial F)^{-1}[A^*(-rBp_{1}^k+rc - \bar{\lambda}_{1}^k) + (N-rA^{*}A)u^{k}],
      % u^{k+1} &= u^{k} + M^{-1}[K^*(r p^{k} - \lambda^{k}) + f_{0} - (S + r K^*K)u^{k}], %\label{PADMM:u}
      \\
      % \text{or} \ \ u^{k+1} &= u^{k} + M^{-1}[(K^*(r p^{k} - \lambda^{k} + u^{k}) + f_{0}) - (I+ S + r K^*K)u^{k}],\\
     p_{1}^{k+1}  &= (r B^*B + \partial G)^{-1}[B^*(-\bar{\lambda}_{1}^{k}  -r \rho_{k}^{1}A u^{k+1}+ r(1-\rho_{k}^{1})Bp_{1}^{k}+\rho_{k}^{1}c)],
      \\
\bar{\lambda}_{1}^{k+1} &= \bar{\lambda}_{1}^{k} + r[\rho_{k}^{1}A u^{k+1}- (1-\rho_{k}^{1})Bp_{1}^{k} +B p_{1}^{k+1}-\rho_{k}^{1}c].
    \end{aligned}
    \right.
  \end{equation}
By choosing $\rho_{k}^{1} \equiv 1$, we get the preconditioned ADMM,
   \begin{equation}
    \label{pre:admm:sys:final}\tag{pADMM}
    \left\{
    \begin{aligned}
       u^{k+1}
      &= (rA^*A + r S^*S + \partial F)^{-1}[A^*(-rBp_{1}^k +rc - \bar{\lambda}_{1}^k) + (N - rA^*A) u^k),
      % u^{k+1} &= u^{k} + M^{-1}[K^*(r p^{k} - \lambda^{k}) + f_{0} - (S + r K^*K)u^{k}], %\label{PADMM:u}
      \\
      % \text{or} \ \ u^{k+1} &= u^{k} + M^{-1}[(K^*(r p^{k} - \lambda^{k} + u^{k}) + f_{0}) - (I+ S + r K^*K)u^{k}],\\
p_{1}^{k+1}  &= (r B^*B + \partial G)^{-1}[B^*( -rA^*u^{k+1} - \bar{\lambda}_{1}^{k} +rc)],
      \\
      \bar{\lambda}_{1}^{k+1} &= \bar{\lambda}_{1}^{k} + r(Au^{k+1} +B p_{1}^{k+1}-c).
      % \label{PADMM:lambda}
    \end{aligned}
    \right.
  \end{equation}
\end{proof}
%Thus we get the relaxed and preconditioned ADMM \eqref{pre:admm:sys:referee:relax:final} formally,
%by applying the original \eqref{admm:relax:sys:final} to the modified constrained optimization problem \eqref{augmented_lagrangian:modified}. The weak convergence and the corresponding ergodic convergence rate of \eqref{pre:admm:sys:referee:relax:final} could be got according to Theorem \ref{thm:gap:estimate:pre:u:weak} Theorem \ref{thm:douglas-rachford-ergodic-rate-relax} with \eqref{eq:second:p:relax} and Corollary \ref{cor:overrelaxation:multiple:parameters} respectively.
%For compactness, we do not state them here, and instead, they will be represented in Theorem \ref{thm:dr-weak-convergence:full:pre:relax} uniformly.
Here, we discuss some connections to the existing works.
\begin{remark}
It is not new to obtain the preconditioned ADMM or other proximal ADMM with or without relaxation by applying the classical ADMM or relaxed ADMM in \cite{EP} to the modified constrained problems including \eqref{saddle:primal}. For example, similar idea could be found in \cite{BS3} for designing preconditioned version of accelerated Douglas-Rachford splitting method. In finite dimensional spaces, a similar modified constraint problem is also considered in \cite{NJ} for linearization of proximal ADMM, which is actually $\tilde{G}(p) = G(p_{1})$ without the constraint $I_{\{0\}}^*(p_2)$ in \eqref{saddle:primal}. However, \cite{NJ} direct applied relaxed ADMM in \cite{EP} with only one relaxation parameter, i.e., $\rho_k^1 = \rho_k^2   \equiv \rho_0 \in(0,2)$. Thus the update of $p_{2}^{k+1}$ in \eqref{eq:p:M:modi:UP} could not be dropped and cooperated into the updates of $u^{k+1}$ in \cite{NJ}, and there are 4 updates for the linearized ADMM in \cite{NJ} instead of 3 updates as in \eqref{pre:admm:sys:referee:relax:final}. Hence, Corollary \ref{cor:overrelaxation:multiple:parameters} could bring out compact and probably more efficient iterations.

%Thus, to the best knowledge of the author, assigning different relaxations parameters to different components of $p$ or $\bar \lambda$ is new, e.g., $\rho_{k}^1$ satisfying \eqref{eq:assum:relaxation}, $\rho_{k}^2 \equiv 1$. Corollary \ref{cor:overrelaxation:multiple:parameters} could help us to make the preconditioned and relaxed ADMM as compact as classical ADMM and probably more efficient, with benefit more from the classical ADMM or relaxed ADMM framework.
\end{remark}
\subsection{The corresponding Douglas-Rachford splitting}\label{sec:admm-equavi-dr}
An interesting relation between the Douglas-Rachford splitting method and ADMM was first pointed out by Gabay \cite{DGBA}, which was also further studied in \cite{SET, Tai}. Here we extend it to relaxation variants with additional discussion.
%First, we write \eqref{saddle:primal} as the following saddle-point problem:
% \begin{equation}\label{saddle:dual}
% \min_{u \in X, p \in \bar Y} \sup_{\lambda \in \bar Z} F(u) + \tilde G(p)+ \langle \lambda, \tilde Au + \tilde B p- \tilde c \rangle  \quad \Leftrightarrow \quad  \min_{\lambda\in \bar Z} F^*(-\tilde A^*\lambda)+ \tilde G^*(-\tilde B^*\lambda) + \langle \tilde  c,  \lambda \rangle
% \end{equation}
%where $\tilde{G}^*(\lambda) = G^*(\lambda_{1}) + I_{\{0\}}(\lambda_{2})$.
 The optimality condition for the dual problem \eqref{eq:precon-saddle-point:M} is
\begin{equation}\label{eq:dual}
0 \in r\partial ( F^*(- \tilde A^* \lambda)) +r\partial (\tilde{G}^*(-\tilde B^* \lambda)) + r \tilde c, \quad \text{for any} \quad r >0.
\end{equation}
Then we can apply Douglas-Rachford splitting method to the operator splitting by $r\partial ( F^*(-\tilde A^* \cdot))$ and $r\partial \tilde{G}^*(-\tilde B^* \cdot) + r \tilde c$. As suggested in \cite{BS}, we can write the Douglas-Rachford as
\begin{equation}\label{iter:proximal}
 0 \in \mM (x^{k+1} -x^{k})+ \mA x^{k+1}
\end{equation}
where $x^{k}= (\lambda^{k}, \tilde{v}^{k})^{T}$ and
\[
\mM = \begin{pmatrix}
I &-I \\
-I & I
\end{pmatrix}, \quad
\mA = \begin{pmatrix}
r\partial \tilde{G}^*(-\tilde B^* \cdot) + r \tilde c &I \\
-I & \{r\partial ( F^*(- \tilde A^* \cdot))\} ^{-1}
\end{pmatrix}.
\]
Writting \eqref{iter:proximal} component wisely, we have
\begin{equation}
\begin{cases}
\lambda^{k+1} = J_{r A_{\tilde B}}(\lambda^{k} - \tilde{v}^{k}), \\
\tilde{v}^{k+1}=  J_{(r A_{\tilde A})^{-1}}(2 \lambda^{k+1}-(\lambda^{k} - \tilde{v}^{k})),
\end{cases}
\end{equation}
where $J_{R}= (I +R)^{-1}$ is the resolvent of the maximal monotone operator $R$ and $rA_{\tilde B} = r \partial (\tilde{G}^*(-\tilde B^* \cdot)) + r \tilde c$, $A_{\tilde A} =  \partial ( F^*(-\tilde A^* \cdot))$. Denoting $v^{k}= \lambda^{k} - \tilde{v}^{k}$ and using the Moreau's identity, we
have
\begin{equation}\label{pre:right}
\begin{cases}
\lambda^{k+1} = J_{r A_{\tilde B}}(v^{k}), \tag{DR}\\
v^{k+1}=  v^{k} + J_{r A_{\tilde A}}(2 \lambda^{k+1}-v^{k}) - \lambda^{k+1}.
\end{cases}
\end{equation}
Furthermore, adding relaxation to \eqref{pre:right}, we have
\begin{equation}\label{pre:right:rela}
\begin{cases}
\lambda^{k+1} = J_{r A_{\tilde B}}(v^{k}), \tag{rDR}\\
v^{k+1}=  v^{k} + I_{\rho}[J_{r A_{\tilde A}}(2 \lambda^{k+1}-v^{k}) - \lambda^{k+1}],
\end{cases}
\end{equation}
where
\[
I_{\rho,k} = \text{Diag} \ [\rho_{k}^{1}, \rho_{k}^{2}], \quad \text{with} \quad \rho_{k}^{i} \ \ \text{satisfying \eqref{eq:assum:relaxation}}, \ \ i = 1,2.
\]
Originated from Gabay \cite{DGBA}, we can get the following theorem.
\begin{theorem}\label{thm:equivalent:dr:admm:relax}
With the assumption \eqref{eq:assumption:admm}, preconditioned ADMM \eqref{pre:admm:sys:final} and its relaxation \eqref{pre:admm:sys:referee:relax:final} could be recovered from \eqref{pre:right} and \eqref{pre:right:rela} separately.
\end{theorem}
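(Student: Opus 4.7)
The plan is to follow Gabay's classical derivation \cite{DGBA}, but carried out on the enlarged constrained problem \eqref{saddle:primal} and with a diagonal relaxation matrix $I_{\rho,k}$ that handles the two blocks independently. First I would apply \eqref{pre:right:rela} to the dual in \eqref{eq:dual} with the split $r\partial\bigl(\tilde G^*(-\tilde B^*\cdot)\bigr)+r\tilde c$ and $r\partial\bigl(F^*(-\tilde A^*\cdot)\bigr)$, and translate each resolvent to its primal counterpart via Fenchel duality. For the first resolvent $\lambda^{k+1}=J_{rA_{\tilde B}}(v^k)$, the inclusion $v^k\in\lambda^{k+1}+r\partial\bigl(\tilde G^*(-\tilde B^*\lambda^{k+1})\bigr)+r\tilde c$ is equivalent to the existence of $p^{k+1}=(p_1^{k+1},p_2^{k+1})$ with $-\tilde B^*\lambda^{k+1}\in\partial\tilde G(p^{k+1})$ and $\lambda^{k+1}=v^k+r\tilde B p^{k+1}-r\tilde c$; for the second resolvent, one similarly obtains $u^{k+1}$ with $-\tilde A^*\mu^{k+1}\in\partial F(u^{k+1})$ and $\mu^{k+1}=(2\lambda^{k+1}-v^k)-r\tilde A u^{k+1}$, where $\mu^{k+1}$ is the resolvent output that then feeds into $v^{k+1}=v^k+I_{\rho,k}(\mu^{k+1}-\lambda^{k+1})$.

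Next I would exploit the block structure $\tilde A=(A,S)^T$, $\tilde B=\mathrm{diag}(B,-I)$, $\tilde c=(c,0)^T$, $\tilde G(p)=G(p_1)+I^*_{\{0\}}(p_2)$ to split each update into a first and a second component. In the second block the inclusion $-(-\lambda_2^{k+1})\in\partial I^*_{\{0\}}(p_2^{k+1})$ combined with the explicit formula for $\lambda_2^{k+1}$ forces the relation $\lambda_2^{k+1}=0$ (at least from $k=1$ onward), which is exactly \eqref{eq:second:lambda:relax}, and the corresponding scalar update in the second block yields $p_2^{k+1}=Su^{k+1}$ as in \eqref{eq:second:p:relax}. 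Plugging $p_2^{k+1}=Su^{k+1}$ into the formula for $u^{k+1}$ coming from $-\tilde A^*\mu^{k+1}\in\partial F(u^{k+1})$ produces the preconditioner $N=rA^*A+rS^*S$ in the first update of \eqref{pre:admm:sys:referee:relax:final}, while the first-block inclusion $-B^*\lambda_1^{k+1}\in\partial G(p_1^{k+1})$ together with the formula $\lambda_1^{k+1}=v_1^k+rBp_1^{k+1}-rc$ produces the $p_1^{k+1}$ update. Finally, the diagonal relaxation $I_{\rho,k}=\mathrm{Diag}[\rho_k^1,\rho_k^2]$ applied to the vector $\mu^{k+1}-\lambda^{k+1}$ delivers exactly the two independent relaxation factors for $\bar\lambda_1$ and $\bar\lambda_2$ that appear in Corollary \ref{cor:overrelaxation:multiple:parameters}; setting $\rho_k^2\equiv 1$ preserves $p_2^{k+1}=Su^{k+1}$ and keeps $\lambda_2^{k+1}\equiv 0$, so the second block can be eliminated and only the iteration in $(u^{k+1},p_1^{k+1},\bar\lambda_1^{k+1})$ remains, recovering \eqref{pre:admm:sys:referee:relax:final}. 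The unrelaxed case \eqref{pre:admm:sys:final} follows by additionally setting $\rho_k^1\equiv 1$, which reduces \eqref{pre:right:rela} to \eqref{pre:right}.

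The main obstacle is the bookkeeping identifying the DR variables $(\lambda^k,v^k)$ with combinations of the ADMM iterates: one has to verify by induction that $v_1^k=\bar\lambda_1^k-rBp_1^k+rc$ (or a similar affine combination, with an analogous identity in the second block), so that the formula $\lambda_1^{k+1}=v_1^k+rBp_1^{k+1}-rc$ reproduces exactly $\bar\lambda_1^{k+1}=\bar\lambda_1^k+r(Au^{k+1}+Bp_1^{k+1}-c)$ after substituting the $u$-update, and analogously that the relaxed version with $I_{\rho,k}$ produces the mixed convex combinations $(1-\rho_k^1)Bp_1^k$ and $\rho_k^1 Au^{k+1}$ that are visible in \eqref{pre:admm:sys:referee:relax:final}. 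Once these identifications are in place and the second block is shown to be absorbed into the preconditioner $N$, the equivalence is a matter of matching the two systems term by term; the assumption \eqref{eq:assumption:admm} guarantees that all resolvents involved are single-valued, so the translation between inclusions and explicit updates is unambiguous.
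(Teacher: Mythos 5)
Your proposal is correct and follows essentially the same route as the paper's proof: apply \eqref{pre:right:rela} to the dual of the enlarged problem \eqref{saddle:primal}, convert both resolvents to primal form via Gabay-type/Fenchel-duality identities, use the second block to force $\bar\lambda_2^{k}\equiv 0$ and, with $\rho_k^2\equiv 1$, the relation $p_2^{k}=Su^{k}$, absorb that block into the preconditioner $N=rA^*A+rS^*S$, and match the remaining three updates with \eqref{pre:admm:sys:referee:relax:final} (setting $\rho_k^1\equiv 1$ then gives \eqref{pre:admm:sys:final} from \eqref{pre:right}). One sign slip should be fixed: the second resolvent output must be $\mu^{k+1}=(2\lambda^{k+1}-v^k)+r\tilde A u^{k+1}$, not $-r\tilde A u^{k+1}$, as forced by $-\tilde A^*\mu^{k+1}\in\partial F(u^{k+1})$ together with the definition of $u^{k+1}$; with the minus sign the relaxed $v$-update would produce $-\rho_k^1 rAu^{k+1}$ instead of the $+\rho_k^1 rAu^{k+1}$ appearing in \eqref{pre:admm:sys:referee:relax:final}, although the rest of your argument (the convex combinations $(1-\rho_k^1)Bp_1^k$ and $\rho_k^1 Au^{k+1}$ you state at the end) clearly uses the correct sign.
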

\begin{proof}
For $J_{r A_{\tilde A}}$ and $J_{r A_{\tilde B}}$, it could be checked that by Proposition 4.1 of \cite{DGBA} (or similar arguments as in Proposition 23.23 of \cite{HBPL}), while $(r\tilde A^*\tilde A+ \partial F)^{-1}$ and $(rB^*B+\partial G)^{-1}$ exist and are Lipschitz continuous which could be guaranteed by \eqref{eq:assumption:admm}, they have the following forms
\begin{align}
J_{r A_{\tilde A }} &= (I  + (-r\tilde A \circ \partial F^* \circ -\tilde A^*))^{-1} = I + r \tilde A \circ(r \tilde A^* \tilde A+ (\partial F^*)^{-1})^{-1}\circ -\tilde A^*, \\
J_{r A_{\tilde B }} & = (\{I + r  B \circ(r B^*  B+ \partial  G)^{-1}\circ - B^*\}(\cdot - r  c), \ \  \{I -r (r I+ \partial I_{\{0\}}^*)^{-1} \}(\cdot))^{T},
\end{align}
where the second component of $J_{r A_{\tilde B }} $ can be simplified with the Moreau identity \cite{CP},
\[
\{I -r (r I+ \partial I_{\{0\}}^*)^{-1} \}(\cdot) = J_{r \partial I_{\{0\}}}(\cdot) = (I + r \partial I_{\{0\}})^{-1}(\cdot).
\]
Together with $(\partial F^*)^{-1} = \partial F$ and denoting $\bar{\lambda}^{k} := \lambda^{k+1}$,
let us introduce some variables
\begin{align}\label{p:intro}
u^{k+1}: &= (r A^*A + rS^*S + \partial F)^{-1}( \tilde A^*(-2 \bar \lambda^{k} + v^{k})), \\
p_{1}^{k} :& = (rB^*B + \partial G)^{-1}(-B^*(v_{1}^{k}-rc)), \label{p1:intro}\\
p_{2}^{k} :& = (rI + \partial I_{\{0\}}^*)^{-1}(v^{k}). \label{p2:intro}
\end{align}
%%\quad \Leftrightarrow \quad \bar{\lambda}^{k} \in \partial \tilde{G(p^{k}).
 Substituting the above relations to \eqref{pre:right:rela}, we have
\begin{equation}\label{pre:fine:compo}
\begin{cases}
\bar{\lambda}_{1}^{k} = v_{1}^{k} - rc + rBp_{1}^{k}, \\
\bar{\lambda}_{2}^{k} = v_{2}^{k} - rp_{2}^{k}  = J_{r \partial I_{\{0\}}}(v_{2}^{k}) = 0 , \\
v_{1}^{k+1}=  (I - \rho_{k}^{1})v_{1}^{k} + \rho_{k}^{1} \bar{\lambda}_{1}^{k} + r \rho_{k}^{1}A u^{k+1}, \\
v_{2}^{k+1}=  (I - \rho_{k}^{2})v_{2}^{k} + \rho_{k}^{2} \bar{\lambda}_{2}^{k} + r \rho_{k}^{2}S u^{k+1}.
\end{cases}
\end{equation}
 By
\begin{equation}\label{eq:update:v1:middle}
\bar{\lambda}^{k} = v^{k} - r \tilde c + r\tilde B p^{k},
\end{equation}
we have
\[
-2 \bar{\lambda}^{k}+v^{k} = -r\tilde Bp^{k} -  \bar{\lambda}^{k} + r \tilde c,
\]
where
\begin{equation}\label{update:u:pri}
u^{k+1}=(r A^*A + rS^*S +  \partial F)^{-1}[A^*(-rBp_{1}^{k} -  \bar{\lambda}_{1}^{k}+rc)+S^*(rp_{2}^{k} -  \bar{\lambda}_{2}^{k})].
\end{equation}
By the second equation of \eqref{pre:fine:compo}, we have
\[
\bar{\lambda}_{2}^{k} \equiv 0, \quad \text{and} \quad
v_{2}^{k} \equiv r p_{2}^{k}.
\]
If we set $\rho_{k}^{2} \equiv 1$,
by the update of $v_{2}^{k+1}$ in \eqref{pre:fine:compo}, we have
\[
v_{2}^{k+1}=   r S u^{k+1},
\]
and what follows is
\[
r p_{2}^{k}=  r S u^{k}.
\]
Substituting it in to \eqref{update:u:pri}, we have the final form for updating $u^{k+1}$,
\begin{equation}\label{update:u:pri:2}
u^{k+1}=(r A^*A + rS^*S +  \partial F)^{-1}[A^*(-rBp_{1}^{k} -  \bar{\lambda}_{1}^{k}+rc)+ r S^*Su^{k}].
\end{equation}
If we set $N = r A^*A+ rS^*S$, \eqref{update:u:pri} becomes
\begin{equation*}
u^{k+1}=(N +  \partial F)^{-1}[A^*(-rBp_{1}^{k} -  \bar{\lambda}_{1}^{k}+rc)+ (N-rA^*A)u^{k}].
\end{equation*}
This is exactly our preconditioned ADMM with only preconditioning for updating $u^{k+1}$.
%And this kind of relaxation is differ considered in \cite{DY, LST}.
We have figured clear the updates of $u^{k+1}$, $\bar{\lambda}_{2}^{k+1}$, $p_{2}^{k+1}$, $v_{2}^{k+1}$. Let us consider the updates of $\bar{\lambda}_{1}^{k+1}$, $v_{1}^{k+1}$, $p_{1}^{k+1}$. Starting from \eqref{pre:fine:compo}, by the update of $v_{1}^{k+1}$, we have
\begin{align}
\bar{\lambda}_{1}^{k+1} &= v_{1}^{k+1} + rB p_{1}^{k+1}-rc \notag \\
& = (I - \rho_{k}^{1})v_{1}^{k} + \rho_{k}^{1} \bar{\lambda}_{1}^{k} + r \rho_{k}^{1}A u^{k+1} + rB p_{1}^{k+1}-rc  \notag \\
&= \bar{\lambda}_{1}^{k} + [(1 - \rho_{k}^{1})(v_{1}^{k}  -\bar{\lambda}_{1}^{k} ) +r \rho_{k}^{1}A u^{k+1} + rB p_{1}^{k+1}-rc] \notag \\
& =  \bar{\lambda}_{1}^{k}  + r[(1 - \rho_{k}^{1})(-Bp_{1}^{k}+c) +  \rho_{k}^{1}A u^{k+1} + B p_{1}^{k+1}-c] \notag \\
& =  \bar{\lambda}_{1}^{k}  + r[\rho_{k}^{1}A u^{k+1} + B p_{1}^{k+1}-(1 - \rho_{k}^{1})Bp_{1}^{k}  - \rho_{k}^{1}c]. \label{eq:lambda:update:fin}
\end{align}
For the update of $p_{1}^{k+1}$, by \eqref{p1:intro} for $k+1$,
substituting the update of $\bar{\lambda}_{1}^{k+1} $ into the following equation
\begin{equation}\label{eq:relation:p:lambda}
 -B^* \bar{\lambda}_{1}^{k+1} \in \partial G(p_{1}^{k+1}),
\end{equation}
with \eqref{eq:lambda:update:fin}, we have
\[
 -B^*\{\bar{\lambda}_{1}^{k}  + r[\rho_{k}^{1}A u^{k+1} -(1 - \rho_{k}^{1})Bp_{1}^{k}  - \rho_{k}^{1}c]\} \in  \partial G(p_{1}^{k+1}) + r B^*B p_{1}^{k+1}.
\]
What follows are the update of $p_{1}^{k+1}$
\[
p_{1}^{k+1} =  (rB^*B + \partial G)^{-1}( -B^*\{\bar{\lambda}_{1}^{k}  + r[\rho_{k}^{1}A u^{k+1} -(1 - \rho_{k}^{1})Bp_{1}^{k}  - \rho_{k}^{1}c]\}),
\]
and the update of $v_{1}^{k+1}$
\[
 v_{1}^{k+1} = \bar{\lambda}_{1}^{k+1} - r Bp_{1}^{k+1} +rc.
\]
Collecting the updates of $u^{k+1}$, $p_{1}^{k+1}$ and $\bar{\lambda}_{1}^{k+1}$, we get the relaxed and preconditioned ADMM from relaxed Douglas-Rachford splitting, i.e., \eqref{pre:admm:sys:referee:relax:final}, with $N = r A^*A + rS^*S$.
%\begin{equation}\label{pre:admm:dr:relax}
%\begin{cases}
%u^{k+1}=(r A^*A + rS^*S +  \partial F)^{-1}(A^*(-rBp_{1}^{k} -  \bar{\lambda}_{1}^{k}+rc)+ r S^*Su^{k}),\\
%p_{1}^{k+1} =  (rB^*B + \partial G)^{-1}( -B^*\{\bar{\lambda}_{1}^{k}  + r[\rho_{k}^{1}A u^{k+1} -(1 - \rho_{k}^{1})Bp_{1}^{k}  - \rho_{k}^{1}c]\})\\
%\bar{\lambda}_{1}^{k+1} =   \bar{\lambda}_{1}^{k}  + r[\rho_{k}^{1}A u^{k+1} + B p_{1}^{k+1}-(1 - \rho_{k}^{1})Bp_{1}^{k}  - \rho_{k}^{1}c].
%\end{cases}
%\end{equation}
Thus we get relaxed and preconditioned ADMM \eqref{pre:admm:sys:referee:relax:final} from the corresponding Douglas-Rachford methods \eqref{pre:right:rela}. By setting $\rho_{k}^{1} \equiv 1$, we also recover \eqref{pre:admm:sys:final} from \eqref{pre:right}.

\end{proof}

\section{Fully preconditioned Douglas-Rachford splitting method and its relaxation}\label{sec:full:pre:admm}

In this section, we will first give the ``partial" gap estimates.
\begin{theorem}\label{them:gap:estimate:full:pre:lao}
For the iteration \eqref{admm:relax:sys:full:final}, with the same notation $p_o^{k}$, $\bar \lambda_o^{k}$ as in \eqref{eq:def:po:lambdao}, $v$, $w$ as in \eqref{def:v:w} and the assumption $Au+Bp = c$, for the ``partial" primal-dual gap for the fully relaxed ADMM, denoting $ \gap_{ x}(x^{k+1}): =  \gap_{(u, p, \bar \lambda)}(u^{k+1},p^{k+1}, \bar \lambda^{k+1})$ with $x = (u,p, \bar \lambda)$ and $x^{k+1} = (u^{k+1}, p^{k+1}, \bar \lambda^{k+1})$, we have, for $k \geq 0$, while $0< \rho_{k} <1$,
\begin{equation} \label{eq:gap:estimate:relaxed:rho1:full}
   \begin{aligned}
\gap_{ x}(x^{k+1}) & \leq
\rho_{k} r \langle B(p^{k} - p_{o}^{k+1}), B(p_{o}^{k+1} - p) \rangle +  \frac{\rho_{k}}{r}\langle \bar \lambda^{k} -\bar \lambda_{o}^{k+1}, \bar \lambda_{o}^{k+1} - \bar \lambda \rangle  \\
       &+ \frac{1-\rho_{k}}{r}\langle v^{k} -v_{o}^{k+1}, v_{o}^{k+1} -v \rangle + \rho_{k}(2-\rho_{k})\langle B(p^{k} - p_{o}^{k+1}), \bar \lambda^{k} -\bar \lambda_{o}^{k+1}\rangle\\
       &+\langle (N-rA^*A)(u^k-u^{k+1}), u^{k+1}-u \rangle+ \langle (M-rB^*B)(p^k-p^{k+1}), p^{k+1}-p \rangle, \notag
  \end{aligned}
\end{equation}
and while $1 \leq \rho_{k} <2$,
\begin{equation}
   \begin{aligned}
  \gap_{ x}(x^{k+1})  & \leq
(2-\rho_{k}) r \langle p^{k} - p_{o}^{k+1}, p_{o}^{k+1} - p \rangle +  \frac{2-\rho_{k}}{r}\langle \bar \lambda^{k} -\bar \lambda_{o}^{k+1}, \bar \lambda_{o}^{k+1} - \bar \lambda \rangle  \label{eq:gap:estimate:relaxed:rho2:full} \\
       &+\frac{\rho_{k}-1}{r} \langle w^{k} -w_{o}^{k+1}, w_{o}^{k+1} -w \rangle + \rho_{k}(2-\rho_{k})\langle B(p^{k} - p_{o}^{k+1}), \bar \lambda^{k} -\bar \lambda_{o}^{k+1}\rangle \notag \\
       & +\langle (N-rA^*A)(u^k-u^{k+1}), u^{k+1}-u \rangle+ \langle (M-rB^*B)(p^k-p^{k+1}), p^{k+1}-p \rangle. \notag
  \end{aligned}
\end{equation}
\end{theorem}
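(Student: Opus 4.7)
The plan is to follow the template of the proof of Theorem~\ref{thm:gap:estimate:pre:u} and simply track where the preconditioners $N-rA^*A$ and $M-rB^*B$ intervene. The key observation is that the update of $u^{k+1}$ in \eqref{admm:relax:sys:full:final} is equivalent to
\[
A^*(-rBp^k + rc - \bar\lambda^k) - rA^*A u^{k+1} + (N-rA^*A)(u^k - u^{k+1}) \in \partial F(u^{k+1}),
\]
and analogously for the update of $p^{k+1}$, using the definition of $\bar\lambda^{k+1}$, one derives
\[
-B^*\bar\lambda^{k+1} + (M-rB^*B)(p^k - p^{k+1}) \in \partial G(p^{k+1}).
\]
Compared with the corresponding inclusions \eqref{eq:relax:gap:u} and \eqref{eq:relax:p:subgradient} used for \eqref{admm:relax:sys:final}, each gains precisely one additional linear correction term arising from the proximal weight.

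Next, I would plug these two inclusions into the convex subgradient inequalities \eqref{eq:sub:gra:f} and \eqref{eq:sub:gra:g} applied at $u^{k+1}-u$ and $p^{k+1}-p$ respectively. The self-adjointness of $N$ and $M$ ensures that the extra terms pair cleanly as
\[
\langle (N-rA^*A)(u^k-u^{k+1}),\, u^{k+1}-u\rangle, \qquad \langle (M-rB^*B)(p^k-p^{k+1}),\, p^{k+1}-p\rangle,
\]
which are exactly the additional contributions appearing in \eqref{eq:gap:estimate:relaxed:rho1:full} and \eqref{eq:gap:estimate:relaxed:rho2:full}. All remaining terms in the gap decomposition are identical to those treated in Theorem~\ref{thm:gap:estimate:pre:u}, since the update of $\bar\lambda^{k+1}$ in \eqref{admm:relax:sys:full:final} coincides with that in \eqref{admm:relax:sys:final} and the relations \eqref{pre:admm:drway:classical:rewrite} and \eqref{eq:p:lambda:po:lambdao} remain in force.

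From there I would copy the algebraic rearrangement of I and II from Theorem~\ref{thm:gap:estimate:pre:u}, leading to the intermediate identity analogous to \eqref{eq:gap:estimate:demtalil} but augmented by the two preconditioner terms. Finally, I would dispose of the mixed terms $\langle B(p^k-p_o^{k+1}),\bar\lambda_o^{k+1}-\bar\lambda\rangle$ and $\langle B(p_o^{k+1}-p),\bar\lambda^k-\bar\lambda_o^{k+1}\rangle$ by exactly the same dichotomy: for $0<\rho_k<1$ use identity \eqref{eq:expan:v} applied to $v^k,v_o^{k+1},v$, and for $1\le\rho_k<2$ use \eqref{eq:expan:w} applied to $w^k,w_o^{k+1},w$. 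This produces \eqref{eq:gap:estimate:relaxed:rho1:full} and \eqref{eq:gap:estimate:relaxed:rho2:full} respectively.

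The part requiring the most care is verifying that the preconditioner inclusions above really do reduce, modulo the extra correction term, to the same form as in the unpreconditioned case, so that the subsequent manipulations go through unchanged; this hinges on the self-adjointness of $N$ and $M$ and on the fact that the relaxed $\bar\lambda^{k+1}$ update in \eqref{admm:relax:sys:full:final} is identical to the one in \eqref{admm:relax:sys:final}, so that no new terms involving $N$ or $M$ sneak into the identity \eqref{pre:admm:drway:classical:rewrite}. Once this is established, the rest is essentially bookkeeping carried over from Theorem~\ref{thm:gap:estimate:pre:u}.
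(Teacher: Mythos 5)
Your proposal is correct and follows essentially the same route as the paper's own proof: derive the two subgradient inclusions with the extra correction terms $(N-rA^*A)(u^k-u^{k+1})$ and $(M-rB^*B)(p^k-p^{k+1})$, rewrite the $\bar\lambda^{k+1}$ update via $p_o^{k+1}$, $\bar\lambda_o^{k+1}$ exactly as in \eqref{pre:admm:drway:classical:rewrite}, carry the two preconditioner inner products through the decomposition of Theorem~\ref{thm:gap:estimate:pre:u}, and then split by cases using \eqref{eq:expan:v} for $0<\rho_k<1$ and \eqref{eq:expan:w} for $1\le\rho_k<2$. The only cosmetic difference is your appeal to self-adjointness of $N$ and $M$ for the pairing, which is not actually needed at that step (the correction terms enter directly as part of the subgradient element); self-adjointness matters only later, in Corollary~\ref{cor:fully:relax:F}, for extracting square roots.
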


\begin{proof}
The proof is quite similar to Theorem \ref{thm:gap:estimate:pre:u}. By direct calculation, the updates of $u^{k+1}$ and $p^{k+1}$ in \eqref{admm:relax:sys:full:final} could be reformulated as follows
\begin{align*}
&A^*(-rBp^k +rc - \bar{\lambda}^k) -rA^*Au^{k+1} +(N-rA^*A)(u^k-u^{k+1}) \in \partial F(u^{k+1}), \\
&B^*( -r \rho_{k} Au^{k+1} + r(1-\rho_{k})Bp_{1}^{k} - \bar{\lambda}_{1}^{k} +r\rho_{k}c) - rB^*Bp^{k+1} +(M-rB^*B)(p^k-p^{k+1}) \in \partial G(p^{k+1}).
\end{align*}
With the update of $\bar \lambda^{k+1}$ in \eqref{admm:relax:sys:full:final}, the second equation of above equations could be written as
\begin{equation}\label{eq:reformulate:g:inclusion}
-B^*\bar \lambda^{k+1} +  (M-rB^*B)(p^k-p^{k+1}) \in \partial G(p^{k+1}).
\end{equation}
  By the notation $p_o^{k}$, $\bar \lambda_o^{k}$ as in \eqref{eq:def:po:lambdao}, it could be seen the update of $\bar \lambda^{k+1}$  in \eqref{admm:relax:sys:full:final} becomes
\[
\bar \lambda_o^{k+1} = \bar \lambda^{k} + r(Au^{k+1}+Bp_o^{k+1}-c),
\]
and what follows is
\[
-rBp^{k} - rAu^{k+1} +rc -\bar \lambda^{k} =-\bar \lambda_{o}^{k+1} +rB p_{o}^{k+1} - rBp^{k}.
\]
Thus the update of $u^{k+1}$ could be written as
\begin{equation}\label{eq:reformulate:f:inclusion}
A^*(-\bar \lambda_{o}^{k+1} +rB p_{o}^{k+1} - rBp^{k}) + (N-rA^*A)(u^k-u^{k+1}) \in \partial F(u^{k+1}).
\end{equation}
With \eqref{eq:reformulate:g:inclusion} and \eqref{eq:reformulate:f:inclusion}, by similar proof as in Theorem \ref{thm:gap:estimate:pre:u} with additional terms involving $N-rA^*A$ and $M-rB^*B$, we see
\begin{align*}
\gap_{x}(x^{k+1})  &= \mL(u^{k+1}, p^{k+1}, \bar \lambda) -\mL(u, p, \bar \lambda^{k+1}) \leq \frac{1}{r}\langle  \bar \lambda^{k} -\bar \lambda_o^{k+1}, \bar \lambda_o^{k+1} - \lambda  \rangle \\
& + r\langle B(p^k - p_o^{k+1}), B(p_o^{k+1}-p) \rangle - \langle p_o^{k+1} - p^k, -B^*(\bar \lambda_o^{k+1}  -\bar \lambda^{k}) \rangle  \\
& -(1-\rho_{k}) \langle B(p^{k} - p_o^{k+1}),\bar \lambda_o^{k+1} -\lambda \rangle -(1-\rho_{k}) \langle B(p_o^{k+1} -p), \bar \lambda^{k} - \bar \lambda_o^{k+1} \rangle \\
& - (1-\rho_{k})^2\langle B(p^k -p_o^{k+1}), \bar \lambda^{k} - \bar \lambda_o^{k+1} \rangle \\
& + \langle (N-rA^*A)(u^k-u^{k+1}), u^{k+1}-u \rangle + \langle (M-rB^*B)(p^k-p^{k+1}), p^{k+1}-p \rangle,
\end{align*}
which is completely similar to \eqref{eq:ref:I}, \eqref{eq:II:final:esti} with the last two additional inner products involving $(N-rA^*A)$ and $(M-rB^*B)$. Thus, with \eqref{eq:gap:estimate:demtalil}, \eqref{eq:expan:v} and \eqref{eq:expan:w}, one gets this theorem with similar arguments.
\end{proof}

\begin{corollary}\label{cor:fully:relax:F}
For iteration \eqref{admm:relax:sys:full:final}, with $v$, $w$ in \eqref{def:v:w} and assumption $Au+Bp=c$, $N - rA^*A = rS^*S$ and $M -rB^*B = rH^*H$, for the primal-dual gap for the relaxed ADMM, we have, for $k\geq 1$, while $0< \rho_k <1$,
\begin{equation} %\label{eq:gap:estimate:relaxed:rho1:full:origin}
   \begin{aligned}
  \gap_{ x}(x^{k+1}) &\leq
 \frac{r}{2}(\|B(p^{k}-p)\|^2-\|B(p^{k+1}-p)\|^2) +  \frac{1}{2r}(\|\bar \lambda^{k} -\bar \lambda \|^2 -\|\bar\lambda^{k+1} -\bar \lambda \|^2 ) \\
       &+ \frac{1-\rho_{k}}{2\rho_{k}r}(\|v^{k}-v\|^2-\|v^{k+1}-v\|^2)
       - \frac{r(2-\rho_{k})}{2\rho_{k}}\|B(p^{k}-p^{k+1})\|^2\\
    &-\frac{(2-\rho_{k})}{2r\rho_{k}}\|\bar\lambda^{k}-\bar\lambda^{k+1}\|^2
       - \frac{1-\rho_{k}}{r}\frac{2-\rho_{k}}{2\rho_{k}^2}\|v^{k}-v^{k+1}\|^2-\frac{r}{\rho_k}\|H(p^{k+1}-p^{k})\|^2\\
      &  +(\frac{r}{\rho_k} -\frac{r}{2})\|H(p^k-p^{k-1})\|^2 +\frac{r}{2}(\|H(p^{k}-p)\|^2 - \|H(p^{k+1}-p)\|^2) \\
       &+ \frac{r}{2}(\|S(u^{k}-u)\|^2 - \|S(u^{k+1} -u)\|^2 - \|S(u^{k+1}-u^k)\|^2). \notag
        % &-(\frac{2}{\rho_k}-1)\{[\|H(p^{k+1}-p^{k})\|^2  -\frac{1}{2}\|H(p^k-p^{k-1})\|^2] +\frac{1}{2}[\|H(p^{k+1}-p)\|^2 - \|H(p^k-p)\|^2]\}, \\
       %& + (1-\frac{1}{\rho_k})(\|H(p^{k}-p)\|^2-\|H(p^{k+1}-p)\|^2-\|H(p^{k+1}-p^k)\|^2). \notag
  \end{aligned}
\end{equation}
While $1 \leq \rho_k <2$, we have
\begin{equation} \label{eq:gap:estimate:relaxed:rho2:origin:full}
   \begin{aligned}
  \gap_{ x}(x^{k+1})  &\leq
 \frac{r}{2\rho_{k}}(\|B(p^{k}-p)\|^2-\|B(p^{k+1}-p)\|^2) +  \frac{1}{2r\rho_{k}}(\|\bar \lambda^{k} -\bar \lambda \|^2 -\|\bar\lambda^{k+1} -\bar \lambda \|^2 ) \\
& + \frac{\rho_{k}-1}{\rho_{k} }[-\langle p^k-p', -B^*(\bar \lambda^k -\bar \lambda') \rangle + \langle p^{k+1}-p', -B^*(\bar \lambda^{k+1} -\bar \lambda') \rangle]\\
       &-\frac{(2-\rho_{k})^2}{2r\rho_{k}^2}\|\bar\lambda^{k}-\bar\lambda^{k+1}\|^2
       - \frac{\rho_{k}-1}{r}\frac{2-\rho_{k}}{2\rho_{k}^2}\|w^{k}-w^{k+1}\|^2  -\frac{r}{\rho_k}\|H(p^{k+1}-p^{k})\|^2\\
      &  +(\frac{r}{\rho_k} -\frac{r}{2})\|H(p^k-p^{k-1})\|^2 +\frac{r}{2}(\|H(p^{k}-p)\|^2 - \|H(p^{k+1}-p)\|^2) \\
       &+ \frac{r}{2}(\|S(u^{k}-u)\|^2 - \|S(u^{k+1} -u)\|^2 - \|S(u^{k+1}-u^k)\|^2) - \frac{r(2-\rho_{k})^2}{2\rho_{k}^2}\|B(p^{k}-p^{k+1})\|^2. \notag
      % &-(\frac{2}{\rho_k}-1)\{[\|H(p^{k+1}-p^{k})\|^2  -\frac{1}{2}\|H(p^k-p^{k-1})\|^2] +\frac{1}{2}[\|H(p^{k+1}-p)\|^2 - \|H(p^k-p)\|^2]\}, \\
      % & + (1-\frac{1}{\rho_k})[\|H(p^{k}-p)\|^2-\|H(p^{k+1}-p)\|^2-\|H(p^{k+1}-p^k)\|^2]. \notag
  \end{aligned}
\end{equation}
\end{corollary}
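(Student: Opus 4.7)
The plan is to start from the already proven Theorem \ref{them:gap:estimate:full:pre:lao} and then process each block of terms on its right-hand side separately. The first block consists of the four terms that already appeared in Theorem \ref{thm:gap:estimate:pre:u}, involving $p_o^{k+1}$, $\bar\lambda_o^{k+1}$, $v$, $w$; for these I would copy verbatim the calculations in the proof of Corollary \ref{cor:relax:partial}, using $p_o^{k+1}-p^k = (p^{k+1}-p^k)/\rho_k$ and $\bar\lambda_o^{k+1}-\bar\lambda^k = (\bar\lambda^{k+1}-\bar\lambda^k)/\rho_k$ together with the polarization identity. This reproduces the same contributions that appear in Corollary \ref{cor:relax:partial}, but still leaves behind the cross term $-\frac{2-\rho_k}{\rho_k}\langle -B(p^k-p^{k+1}), \bar\lambda^k-\bar\lambda^{k+1}\rangle$ to be absorbed later.

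The second block consists of the two new inner products, $\langle(N-rA^*A)(u^k-u^{k+1}),u^{k+1}-u\rangle$ and $\langle(M-rB^*B)(p^k-p^{k+1}),p^{k+1}-p\rangle$. Using the hypothesis $N-rA^*A=rS^*S$ and $M-rB^*B=rH^*H$, these become $r\langle S(u^k-u^{k+1}),S(u^{k+1}-u)\rangle$ and $r\langle H(p^k-p^{k+1}),H(p^{k+1}-p)\rangle$, and a direct application of the polarization identity $2\langle a-b,b-c\rangle=\|a-c\|^2-\|a-b\|^2-\|b-c\|^2$ immediately produces the $\frac{r}{2}(\|S(u^k-u)\|^2-\|S(u^{k+1}-u)\|^2-\|S(u^{k+1}-u^k)\|^2)$ block together with the preliminary $\frac{r}{2}(\|H(p^k-p)\|^2-\|H(p^{k+1}-p)\|^2-\|H(p^{k+1}-p^k)\|^2)$ contribution that appears in the conclusion.

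The decisive step, and the main obstacle, is to absorb the cross term $-\frac{2-\rho_k}{\rho_k}\langle -B(p^k-p^{k+1}), \bar\lambda^k-\bar\lambda^{k+1}\rangle$. In the un-preconditioned case this term was nonpositive by \eqref{eq:estimate:relax:rho:ine:inclu:g}, but here the inclusion \eqref{eq:reformulate:g:inclusion} only delivers $-B^*\bar\lambda^{k+1}+rH^*H(p^k-p^{k+1})\in\partial G(p^{k+1})$. Writing the same inclusion at index $k$ (this is where the requirement $k\geq 1$ enters and where the $p^{k-1}$ terms in the statement originate) and invoking monotonicity of $\partial G$ between the two iterates, I would obtain $\langle -B(p^k-p^{k+1}),\bar\lambda^k-\bar\lambda^{k+1}\rangle\geq r\|H(p^{k+1}-p^k)\|^2 - r\langle H(p^k-p^{k-1}),H(p^{k+1}-p^k)\rangle$. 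Multiplying by the nonpositive factor $-\frac{2-\rho_k}{\rho_k}$ reverses the inequality, and a Young inequality on the remaining inner product then splits off $\frac{2-\rho_k}{2\rho_k}r\|H(p^k-p^{k-1})\|^2$ and $\frac{2-\rho_k}{2\rho_k}r\|H(p^{k+1}-p^k)\|^2$. Combining this bound with the leftover $-\frac{r}{2}\|H(p^{k+1}-p^k)\|^2$ from the polarization step, the coefficients algebraically collapse to $-\frac{r}{\rho_k}\|H(p^{k+1}-p^k)\|^2+(\frac{r}{\rho_k}-\frac{r}{2})\|H(p^k-p^{k-1})\|^2$, matching the statement. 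The subtlety to watch is precisely this bookkeeping: the Young constant must be chosen $\tfrac12$ so that the two $\|H(p^{k+1}-p^k)\|^2$ contributions combine with the polarization remainder to give the coefficient $-r/\rho_k$ stated in the conclusion, since any other choice would produce a strictly weaker estimate that would not telescope correctly when summed in the subsequent convergence argument.
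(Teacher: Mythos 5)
Your proposal is correct and follows essentially the same route as the paper's proof: start from Theorem \ref{them:gap:estimate:full:pre:lao}, reuse the computations of Corollary \ref{cor:relax:partial} for the standard terms, polarize the $rS^*S$- and $rH^*H$-inner products, and---the decisive step---apply the inclusion \eqref{eq:reformulate:g:inclusion} at both indices $k$ and $k+1$ together with monotonicity of $\partial G$ and Young's inequality with constant $\tfrac12$ to absorb the cross term, which is precisely where $k\geq 1$ and the $\|H(p^k-p^{k-1})\|^2$ terms enter. The only difference is cosmetic regrouping (the paper splits the coefficient of $\langle (M-rB^*B)(p^k-p^{k+1}),p^{k+1}-p\rangle$ as $\frac{2-\rho_k}{\rho_k}+(2-\frac{2}{\rho_k})$ and couples part of it into the monotonicity bound, whereas you polarize it wholesale and bound the cross term alone), and both bookkeepings collapse to the identical coefficients $-\frac{r}{\rho_k}$ and $\frac{r}{\rho_k}-\frac{r}{2}$.
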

\begin{proof}
The proof is similar to Corollary \ref{cor:relax:partial}. Taking $0 < \rho_k <1 $ for example, after substituting $p_o^{k+1}$ and $\lambda_o^{k+1}$ into the gap estimate in Theorem \ref{them:gap:estimate:full:pre:lao} by the expansion of $p_o^{k+1}$ and $\lambda_o^{k+1}$ by as in \eqref{eq:def:po:lambdao}, we have the detailed ``partial" gap estimate,
 \begin{equation} \label{eq:gap:estimate:relaxed:rho1:origin:fullyE}
   \begin{aligned}
  &\gap_{x}(x^{k+1})   \leq
 \frac{r}{2}(\|B(p^{k}-p)\|^2-\|B(p^{k+1}-p)\|^2) +  \frac{1}{2r}(\|\bar \lambda^{k} -\bar \lambda \|^2 -\|\bar\lambda^{k+1} -\bar \lambda \|^2 ) \\
       &-\frac{(2-\rho_{k})}{2r\rho_{k}}\|\bar\lambda^{k}-\bar\lambda^{k+1}\|^2 + \frac{1-\rho_{k}}{2\rho_{k}r}(\|v^{k}-v\|^2-\|v^{k+1}-v\|^2) - \frac{1-\rho_{k}}{r}\frac{2-\rho_{k}}{2\rho_{k}^2}\|v^{k}-v^{k+1}\|^2\\
       &- \frac{2-\rho_{k}}{\rho_{k}}\langle -B(p^{k} - p^{k+1}), \bar \lambda^{k} -\bar \lambda^{k+1}\rangle + \langle (M-rB^*B)(p^k-p^{k+1}), p^{k+1}-p \rangle \\
       &+ \langle(N-rA^*A)(u^k-u^{k+1}),u^{k+1}-u \rangle  - \frac{r(2-\rho_{k})}{2\rho_{k}}\|B(p^{k}-p^{k+1})\|^2.
  \end{aligned}
\end{equation}
It could be verified that
\begin{align}
&\langle (M - rB^*B)(p^k-p^{k+1}),p^{k+1}-p \rangle- \langle p^{k+1} - p^{k}, -B^*(\bar \lambda^{k+1} -\bar \lambda^{k})\rangle \notag \\
&  = - \langle p^{k+1} - p^{k}, (M - rB^*B)(p^k-p^{k+1})-B^*\bar \lambda^{k+1} - [ (M - rB^*B)(p^{k-1}-p^{k})- B^*\bar \lambda^{k}])\rangle \notag \\
& + \langle p^{k+1}-p^{k}, (M - rB^*B)(p^{k}-p^{k+1}) \rangle - \langle p^{k+1}-p^{k}, (M-rB^*B)(p^{k-1}-p^{k}) \rangle \notag \\
& + \langle (M - rB^*B)(p^k-p^{k+1}),p^{k+1}-p \rangle. \notag
\end{align}
By  \eqref{eq:reformulate:g:inclusion} and monotone property of subgradient $\partial G$, we see for $k \geq 1$,
\[
- \langle p^{k+1} - p^{k}, (M - rB^*B)(p^k-p^{k+1})-B^*\bar \lambda^{k+1} - [ (M - rB^*B)(p^{k-1}-p^{k})- B^*\bar \lambda^{k}])\rangle  \leq 0,
\]
and with the assumption $M - rB^*B \geq 0$, we get
\begin{align}
&\langle (M - rB^*B)(p^k-p^{k+1}),p^{k+1}-p \rangle- \langle p^{k+1} - p^{k}, -B^*(\bar \lambda^{k+1} -\bar \lambda^{k})\rangle \notag \\
&\leq \langle (M - rB^*B)(p^{k+1} - p^{k}), 2(p^k-p^{k+1}) +p - p^{k-1} \rangle  \notag \\
& = -2  \langle (M - rB^*B)(p^{k+1} - p^{k}), p^{k+1}-p^{k} \rangle + \langle  (M - rB^*B)(p^{k+1} - p^{k}), p-p^{k-1} \rangle.   \label{eq:gap:full:p}
\end{align}
Since $(N-rA^*A)/r$ and $(M-rB^*B)/r$ are both self-adjoint and positive semi-definite operator, there exist square roots operator $S \in L(X,Z)$ and $H  \in L(Y,Z)$, which are  positive semi-definite, self-adjoint, bounded and linear (see Theorem VI.9 of \cite{SIMON}),  such that $N-rA^*A = rS^*S$ and $M-rB^*B = rH^*H$. Thus, by polarization identity, for the last term in \eqref{eq:gap:full:p}, we have
\begin{align}
&\langle  (M - rB^*B)(p^{k+1} - p^{k}), p-p^{k-1} \rangle = \langle rH^*H(p^{k+1} - p^{k}), p - p^k + p^k -p^{k-1} \rangle \notag\\
& \leq \frac{r}{2}[\|H(p^{k+1}-p^{k})\|^2 + \|H(p^k-p^{k-1})\|^2] -  \langle rH^*H(p^{k+1} - p^{k}), p^{k} -p \rangle \notag\\
& \leq  \frac{r}{2}[\|H(p^{k+1}-p^{k})\|^2 + \|H(p^k-p^{k-1})\|^2]\notag\\
&-\frac{r}{2}[\|H(p^{k+1}-p)\|^2 - \|H(p^k-p)\|^2 - \|H(p^{k+1}-p^k)\|] \notag\\
& = r\|H(p^{k+1}-p^{k})\|^2  +\frac{r}{2}\|H(p^k-p^{k-1})\|^2 -\frac{r}{2}[\|H(p^{k+1}-p)\|^2 - \|H(p^k-p)\|^2].\label{eq:full:gap:p:final}
\end{align}

With \eqref{eq:gap:full:p} and \eqref{eq:full:gap:p:final}, we have
\begin{equation}\label{eq:reorganize:inner:p}
\begin{aligned}
&-\frac{2-\rho_{k}}{\rho_{k}}\langle  p^{k} - p^{k+1}, -B^*(\bar \lambda^{k} -\bar \lambda^{k+1})\rangle + \langle (M-rB^*B)(p^k-p^{k+1}), p^{k+1}-p \rangle \\
&=\frac{2-\rho_{k}}{\rho_{k}} [-\langle  p^{k} - p^{k+1}, -B^*(\bar \lambda^{k} -\bar \lambda^{k+1})\rangle + \langle (M-rB^*B)(p^k-p^{k+1}), p^{k+1}-p \rangle]\\
&+(2-\frac{2}{\rho_k})\langle (M-rB^*B)(p^k-p^{k+1}), p^{k+1}-p \rangle \\
& \leq \frac{2-\rho_{k}}{\rho_{k}}r\{-\|H(p^{k+1}-p^{k})\|^2  +\frac{1}{2}\|H(p^k-p^{k-1})\|^2 +\frac{1}{2}[\|H(p^{k}-p)\|^2 - \|H(p^{k+1}-p)\|^2]\}, \\
&+ (2-\frac{2}{\rho_k})\frac{r}{2}(\|H(p^k-p)\|^2-\|H(p^{k+1}-p)\|^2-\|H(p^{k}-p^{k+1})\|^2) \\
& =  -\frac{r}{\rho_k}\|H(p^{k+1}-p^{k})\|^2
        +(\frac{r}{\rho_k} -\frac{r}{2})\|H(p^k-p^{k-1})\|^2 +\frac{r}{2}(\|H(p^{k}-p)\|^2 - \|H(p^{k+1}-p)\|^2).
\end{aligned}
\end{equation}
Substituting \eqref{eq:reorganize:inner:p} into \eqref{eq:gap:estimate:relaxed:rho1:origin:fullyE}, we get Corollary \ref{cor:fully:relax:F} for $\rho_{k} \in (0,1)$ case.

For $\rho_k \in [1,2)$, with Theorem \ref{them:gap:estimate:full:pre:lao}, the expansion \eqref{eq:expan:w:relax}, \eqref{eq:reorganize:inner:p} and with similar arguments as in Theorem \ref{thm:gap:estimate:pre:u:weak}, one gets this corollary.
\end{proof}

Still similar to Lemma \ref{lem:admm-fixed-points:rela}, we also have the following lemma.
\begin{lemma}
  \label{lem:fully:padmm-fixed-points:relax}
  Iteration~\eqref{admm:relax:sys:full:final} possesses the following
  properties:
    If $\wlim_{i \to \infty}(u^{k_i}, p^{k_i}, \bar \lambda^{k_i}) = (u^*, p^*,\bar \lambda^*)$ and
    $\lim_{i \to \infty} ( (N-rA^*A)(u^{k_i}-u^{k_i+1}), (M-rB^*B)(p^{k_i} - p^{k_i+1}) , B(p^{k_i} -  p^{k_i+1}), \bar \lambda^{k_i} - \bar \lambda^{k_i+1})
    = (0,0, 0,0)$, then
    $(u^{k_i+1}, p^{k_i+1}, \bar \lambda^{k_i+1})$ converges weakly to
    a saddle point for~\eqref{eq:saddle-point-prob}.
\end{lemma}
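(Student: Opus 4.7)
I would mirror the structure of Lemma~\ref{lem:admm-fixed-points:rela}, whose second part is the analogous statement for \eqref{admm:relax:sys:final}, and adapt it by carrying along the two extra linear terms $(N-rA^*A)(u^{k}-u^{k+1})$ and $(M-rB^*B)(p^{k}-p^{k+1})$ that come from the proximal (preconditioning) corrections. The guiding idea is: read the iteration as producing subgradient inclusions for $F$, $G$ and a constraint residual, then show that, under the extra hypotheses, these three quantities are forced to zero along the chosen subsequence, so that one can pass to the limit using weak-strong closedness of the saddle-point operator.

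First I would rewrite the updates in \eqref{admm:relax:sys:full:final} as inclusions. Exactly as in \eqref{eq:reformulate:f:inclusion} and \eqref{eq:reformulate:g:inclusion} of Theorem~\ref{them:gap:estimate:full:pre:lao}, the $u$-update gives
\[
A^*(-rBp^{k}+rc-\bar\lambda^{k})-rA^*Au^{k+1}+(N-rA^*A)(u^{k}-u^{k+1})\in\partial F(u^{k+1}),
\]
and the $p$-update combined with the $\bar\lambda$-update yields
\[
-B^*\bar\lambda^{k+1}+(M-rB^*B)(p^{k}-p^{k+1})\in\partial G(p^{k+1}).
\]
Adding $A^*\bar\lambda^{k+1}$ on both sides of the first inclusion and using the $\bar\lambda$-update to replace $-rBp^{k}+rc-rAu^{k+1}-\bar\lambda^{k}$ by $-\bar\lambda^{k+1}+rB(p^{k+1}-p^{k})-r(Au^{k+1}+Bp^{k+1}-c)+(\bar\lambda^{k+1}-\bar\lambda^{k})$ allows me to write
\[
A^*\bar\lambda^{k+1}+\partial F(u^{k+1})\ni A^*(\bar\lambda^{k+1}-\bar\lambda^{k})+rA^*B(p^{k+1}-p^{k})-rA^*\bigl(Au^{k+1}+Bp^{k+1}-c\bigr)+(N-rA^*A)(u^{k}-u^{k+1}),
\]
while the second inclusion reads $B^*\bar\lambda^{k+1}+\partial G(p^{k+1})\ni(M-rB^*B)(p^{k}-p^{k+1})$.

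Next I would extract the constraint residual from the $\bar\lambda$-update itself:
\[
\bar\lambda^{k+1}-\bar\lambda^{k}=r\bigl[\rho_{k}(Au^{k+1}+Bp^{k}-c)+B(p^{k+1}-p^{k})\bigr],
\]
so that, since $\rho_{k_i}\to\rho^{*}\in(0,2)$ is bounded away from zero and the hypotheses give $\bar\lambda^{k_i}-\bar\lambda^{k_i+1}\to 0$ and $B(p^{k_i}-p^{k_i+1})\to 0$, one obtains $Au^{k_i+1}+Bp^{k_i}-c\to 0$ and therefore $Au^{k_i+1}+Bp^{k_i+1}-c\to 0$. Plugging this and the two remaining strong-convergence hypotheses into the two displayed inclusions above, the right-hand sides of both inclusions converge strongly to zero along $k_i$. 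Thus along $k_i$
\[
\bigl(\partial F(u^{k_i+1})+A^*\bar\lambda^{k_i+1},\ \partial G(p^{k_i+1})+B^*\bar\lambda^{k_i+1},\ -(Au^{k_i+1}+Bp^{k_i+1}-c)\bigr)\ni \varepsilon_{k_i}\to 0.
\]

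Finally, $\bar\lambda^{k_i+1}\weakto\bar\lambda^{*}$ weakly because $\bar\lambda^{k_i+1}-\bar\lambda^{k_i}\to 0$ strongly, and by the Lipschitz continuity of $(N+\partial F)^{-1}$, $(M+\partial G)^{-1}$ that follows from \eqref{eq:assumption:admm} (together with uniform boundedness of the inputs of these resolvents along $k_i$), passing to a further subsequence if necessary, $(u^{k_i+1},p^{k_i+1})$ converges weakly to some $(u^{**},p^{**})$; the identities $Bp^{k_i+1}\weakto Bp^{*}$ and $Au^{k_i+1}+Bp^{k_i+1}-c\to 0$ then force the relevant components of this limit to match $(u^{*},p^{*},\bar\lambda^{*})$ up to the freedom in the null-spaces of $A$ and $B$. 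Applying the weak-strong closedness (Proposition 20.33 of \cite{HBPL}) of the maximally monotone operator
\[
(u,p,\bar\lambda)\mapsto\bigl(A^*\bar\lambda+\partial F(u),\ B^*\bar\lambda+\partial G(p),\ -Au-Bp+c\bigr)
\]
as in Lemma~\ref{lem:admm-fixed-points:rela} delivers $0\in A^*\bar\lambda^{**}+\partial F(u^{**})$, $0\in B^*\bar\lambda^{**}+\partial G(p^{**})$, $Au^{**}+Bp^{**}=c$, i.e.\ a saddle point of \eqref{eq:saddle-point-prob}. The main obstacle, as in the prototype lemma, is the bookkeeping that turns the two update equalities into clean subgradient inclusions and isolates the specific strong-convergence hypotheses needed to cancel the preconditioning terms $(N-rA^*A)(u^{k}-u^{k+1})$ and $(M-rB^*B)(p^{k}-p^{k+1})$; everything else is a routine adaptation of the non-preconditioned proof.
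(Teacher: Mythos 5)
Your proposal is correct and takes essentially the same approach as the paper, which omits an explicit proof and simply declares the lemma ``still similar to Lemma~\ref{lem:admm-fixed-points:rela}'': you carry out exactly that adaptation, deriving the subgradient inclusions with the extra $(N-rA^*A)$ and $(M-rB^*B)$ terms (as in \eqref{eq:reformulate:f:inclusion} and \eqref{eq:reformulate:g:inclusion}), forcing the constraint residual and the inclusion errors to zero using precisely the four strong-convergence hypotheses, and concluding via weak--strong closedness of the maximally monotone saddle-point operator. Your admittedly loose identification of the weak limit of the shifted subsequence $(u^{k_i+1},p^{k_i+1},\bar\lambda^{k_i+1})$ is handled no less rigorously than the corresponding step in the paper's own proof of Lemma~\ref{lem:admm-fixed-points:rela}, so this is not a point of divergence.
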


For the weak convergence of iteration \eqref{admm:relax:sys:full:final} and the ergodic convergence rate, we have similar theorem as Theorem \ref{thm:gap:estimate:pre:u:weak} and \ref{thm:douglas-rachford-ergodic-rate-relax} as follows, whose proof is omitted.

\begin{theorem}
  \label{thm:dr-weak-convergence:full:pre:relax}
  With assumptions \eqref{eq:assumption:admm} and the assumptions on $\rho_k$ for \eqref{admm:relax:sys:full:final}, if~\eqref{saddle:primal:original} possesses a solution, then for
  ~\eqref{admm:relax:sys:full:final}, the iteration sequences $(u^k, p^{k}, \bar \lambda^{k})$
  converges weakly to
  a saddle-point $x^* = (u^*,p^*, \bar \lambda^*)$ of~\eqref{eq:saddle-point-prob} and $(u^*, p^*)$ is a solution of \eqref{saddle:primal:original}. Then
  the ergodic sequences
  \begin{equation}
    \label{eq:douglas-rachford-erg-seq-fully-pre-realx}
    u_{\erg}^k = \frac1k \sum_{k' = 2}^{k+1} u^{k'},\ \ \  p_{\erg}^k = \frac1k \sum_{k' = 2}^{k} p^{k+1}, \quad
    \bar \lambda_{\erg}^k = \frac1k \sum_{k' = 2}^{k+1} \bar \lambda^{k'},
  \end{equation}
  converge weakly to $u^*$, $p^*$ and $\bar  \lambda^*$ respectively. We have $\gap_{x^*}(x_{\erg}^k) \geq 0$ with $x_{\erg}^k =  (u_{\erg}^k, p_{\erg}^k, \bar \lambda_{\erg}^k)$, and the ergodic convergence rate of the ``partial" primal-dual gap is as follows, for $0< \rho_k < 1$,
  \begin{equation}\notag
   % \label{eq:dr-O1k-convergence-fully-pre}
    \begin{aligned}
   \gap_{x^*}(x_{\erg}^k) &\leq \frac 1k
    \
    \Bigl[
    \frac{r \norm{B p^1 - Bp^* }^2}{2} +
    \frac{\norm{\bar \lambda^1 - \bar \lambda^*}^2}{2r} +  \frac{1-\rho_{1}}{2\rho_{1}r}\|v^{1}-v^*\|^2 +\frac{r}{2}\|S(u^{1}-u^*)\|^2 \\
   & \quad  + r(\frac{1}{\rho_1} - \frac12)\|H(p^1-p^0)\|^2+ \frac{r}{2}\|H(p^1-p^*)\|^2
    \Bigr]
    = \mO(\frac{1}{k}).
    \end{aligned}
   \end{equation}
   For $1 \leq \rho_k <2$, we have
     \begin{equation}\notag
   % \label{eq:dr-O1k-convergence-fully-pre}
    \begin{aligned}
  \gap_{x^*}(x_{\erg}^k) &\leq \frac 1k
     \
    \Bigl[  \frac{1}{\rho_{1}}
    \Bigl(\frac{r \norm{B p^1 - Bp^* }^2}{2} +
    \frac{\norm{\bar \lambda^1 - \bar \lambda^*}^2}{2r} +   \frac{\rho_1-1}{2}\|\sqrt{r}B(p^1-p) + (\bar \lambda^1 - \bar \lambda)/\sqrt{r} \|^2\Bigr) \\
   &\quad +\frac{r}{2}\|S(u^{1}-u^*)\|^2  + r(\frac{1}{\rho_1} - \frac12)\|H(p^1-p^0)\|^2+ \frac{r}{2}\|H(p^1-p^*)\|^2
    \Bigr]
    = \mO(\frac{1}{k}).
    \end{aligned}
  \end{equation}
\end{theorem}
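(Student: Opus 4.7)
The plan is to closely mirror the two-part strategy used for \eqref{admm:relax:sys:final} in Theorems \ref{thm:gap:estimate:pre:u:weak} and \ref{thm:douglas-rachford-ergodic-rate-relax}, but starting from the refined ``partial'' gap inequality provided by Corollary \ref{cor:fully:relax:F}. Fix a saddle point $x^* = (u^*,p^*,\bar\lambda^*)$ of \eqref{eq:saddle-point-prob}, so in particular $-B^*\bar\lambda^* \in \partial G(p^*)$ and $Au^*+Bp^*=c$. The first step is to absorb the $\rho_k$-mixed term by the monotonicity of $\partial G$: from \eqref{eq:reformulate:g:inclusion} we obtain a preconditioned version of the nonpositivity \eqref{eq:estimate:relax:rho:ine:inclu:g}, namely
\[
-\frac{2-\rho_k}{\rho_k}\langle -B(p^k-p^{k+1}), \bar\lambda^k-\bar\lambda^{k+1}\rangle + \langle (M-rB^*B)(p^k-p^{k+1}),p^{k+1}-p^*\rangle \le \text{(rearranged telescoping)},
\]
which is exactly what was already carried out inside Corollary \ref{cor:fully:relax:F}. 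So one can take the estimate from Corollary \ref{cor:fully:relax:F} as the starting point without further work.

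Next I would sum the inequality of Corollary \ref{cor:fully:relax:F} from $k'=k_0$ to $k'=k-1$. The $B$ and $\bar\lambda$ (resp.\ $v^k$ or $w^k$) pieces telescope and yield a monotonically decreasing energy $d_k(x^k,x^*)$ analogous to \eqref{eq:dr_boundedness:relax} (resp.\ \eqref{eq:inequality:relax:rho2}). The extra $S$-block telescopes directly and contributes $\tfrac{r}{2}\|S(u^{k_0}-u^*)\|^2 - \tfrac{r}{2}\sum\|S(u^{k'+1}-u^{k'})\|^2$. The only genuinely new bookkeeping is the $H$-block: after an index shift the combined term $\sum_{k'}\bigl[-\tfrac{r}{\rho_{k'}}\|H(p^{k'+1}-p^{k'})\|^2+(\tfrac{r}{\rho_{k'}}-\tfrac{r}{2})\|H(p^{k'}-p^{k'-1})\|^2\bigr]$ is bounded above by $-\tfrac{r}{2}\sum_{k'}\|H(p^{k'+1}-p^{k'})\|^2$ plus an initial-data term, where monotonicity $\rho_{k'-1}\le\rho_{k'}$ from \eqref{eq:assum:relaxation} is exactly what is needed. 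This delivers uniform bounds on $(Bp^k, \bar\lambda^k, Su^k, Hp^k)$ and summability of $B(p^k-p^{k+1}), \bar\lambda^k-\bar\lambda^{k+1}, S(u^{k+1}-u^k), H(p^{k+1}-p^k)$, hence these differences tend to zero. The Lipschitz continuity of $(N+\partial F)^{-1}$ and $(M+\partial G)^{-1}$ together with the updates in \eqref{admm:relax:sys:full:final} then promote these bounds to boundedness of the full sequence $(u^k,p^k,\bar\lambda^k)$.

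With boundedness and vanishing of the four discrete differences in hand, Banach--Alaoglu produces a weakly convergent subsequence, and Lemma \ref{lem:fully:padmm-fixed-points:relax} identifies its weak limit as a saddle point of \eqref{eq:saddle-point-prob}. Uniqueness of the weak limit is then obtained by the same Opial-type polarization argument as in the proof of Theorem \ref{thm:gap:estimate:pre:u:weak}: for two weak subsequential limits $x^*,x^{**}$, writing $d_k$ at both limits and subtracting, the cross-terms vanish in the limit and one extracts $B p^*=B p^{**}$, $\bar\lambda^*=\bar\lambda^{**}$, $Su^*=Su^{**}$, $Hp^*=Hp^{**}$. Plugging back into the fixed-point equations and invoking Lipschitz continuity of $(N+\partial F)^{-1}$ and $(M+\partial G)^{-1}$ upgrades this to $u^*=u^{**}$ and $p^*=p^{**}$, which gives weak convergence of the full sequence.

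For the ergodic rate, I would apply Jensen's inequality to $(u',p',\bar\lambda')\mapsto \mL(u',p',\bar\lambda^*)-\mL(u^*,p^*,\bar\lambda')$ and sum the gap bound of Corollary \ref{cor:fully:relax:F} from $k'=1$ to $k$; telescoping of the $B,\bar\lambda,v,w,S,H$ terms leaves exactly the initial-data expressions displayed in the statement, giving $\gap_{x^*}(x_{\erg}^k) = \mO(1/k)$, and the weak convergence of $x_{\erg}^k$ follows from Stolz--Ces\`aro exactly as in Theorem \ref{thm:douglas-rachford-ergodic-rate-relax}. The main technical obstacle throughout is the $H$-block bookkeeping with varying $\rho_k$: it is what forces the non-decreasing hypothesis on $\rho_k$ in \eqref{eq:assum:relaxation} and the shift to ergodic indices $k'=2,\dots,k+1$ in \eqref{eq:douglas-rachford-erg-seq-fully-pre-realx}, since the $\|H(p^{k'}-p^{k'-1})\|^2$ term is needed from $k'=1$ onward and the starting energy at $k_0=1$ carries the $\|H(p^1-p^0)\|^2$ contribution.
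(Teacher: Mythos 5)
Your proposal is correct and follows essentially the same route the paper intends: the paper in fact omits this proof, stating only that it follows from Corollary \ref{cor:fully:relax:F} and Lemma \ref{lem:fully:padmm-fixed-points:relax} by arguments similar to Theorems \ref{thm:gap:estimate:pre:u:weak} and \ref{thm:douglas-rachford-ergodic-rate-relax}, which is precisely what you flesh out (summation and telescoping of the preconditioned gap estimate, Banach--Alaoglu plus the fixed-point lemma, the Opial-type uniqueness argument upgraded through the Lipschitz resolvents, and Jensen/Stolz--Ces\`aro for the ergodic rate). Your identification of the genuinely new bookkeeping --- the $H$-block index shift whose sign control is exactly the non-decreasing hypothesis on $\rho_k$, the extra $S$- and $H$-terms entering both the energy and the uniqueness step, and the shift of the ergodic sequences to indices $k'=2,\dots,k+1$ forced by the $\|H(p^1-p^0)\|^2$ contribution --- matches the structure of Corollary \ref{cor:fully:relax:F} and reproduces the constants in the stated bounds.
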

Here for simplicity and clarity, we use different ergodic sequence $x_{\erg}^{k}$ \eqref{eq:douglas-rachford-erg-seq-fully-pre-realx} compared to ergodic sequences in previous sections, and $(u^1,p^1, \bar \lambda^1)$ is used in above theorem followed by the estimates involving $p^{0}$ with Corollary \eqref{cor:fully:relax:F}.

\section{Numerical Part}
\noindent\textbf{TV denosing.}
Let us apply the discrete framework for the total-variation regularized
$L^2$- and $L^1$-type denoising problems (the
$L^2$-case is usually called the \emph{ROF model})
\begin{equation}
  \label{eq:tv-denoising-min-prob}
  \min_{u \in X} \ \frac1q \| u - f \|_q^q
  + \alpha \|\nabla u\|_1, \qquad
  % F(u) =
  \qquad
  \text{for $q=1,2$}
 % \begin{cases}
 %\| u - f\|_2^2/2  = \langle u, u \rangle/2 - \langle f, u \rangle + \|f\|_{2}^2/2.
 % \end{cases}
\end{equation}
where $f: \Omega \to \RR$ is a given noisy image and $\alpha > 0$ is a
regularization parameter. For detailed discrete setting, see \cite{BS2}.

\noindent\textbf{The case $q=2$.}
We see that for $q=2$,~\eqref{eq:tv-denoising-min-prob} could be reformulated
as \eqref{saddle:primal:original} with introducing adjacent variable $p = \nabla u$ and the
data
\begin{equation}\label{eq:rof:data}
A = \nabla, \  B=-I, c = 0, \  F = \frac12 \| u - f \|_2^2, \
G = \alpha \|\cdot\|_{1}.
\end{equation}

%As we are in finite dimensions, both $F$ and
%$\alpha \|\cdot\|_1$ are continuous, so one can employ
%Fenchel--Rockafellar duality to obtain existence of solutions for the
%associated saddle-point problem \eqref{saddle}~\cite{HBPL, KK}.
% with $G^{*} = \mI_{\{\|v\|_\infty \leq \alpha\}}$ while
% $\mI_C$ denotes the indicator function of the set $C$, i.e.,
% \[
% \mI_{C}(x) = \begin{cases}
% 0 & \ \text{if} \  x  \in C, \\
% \infty &\ \text{else}.
% \end{cases}
% \]
Furthermore, observe that $(\partial F + rA^*A)u := (I - r \Delta)u - f$ is monotone for any
$r > 0$. Hence, a preconditioner for $T = I - r \Delta$ has to be chosen here.
In order to implement the algorithm,
noting that for $(rI + \partial G)^{-1}$, we have
\begin{equation}\label{resolve:G}
(rI + \partial G)^{-1}
= \Bigl(I + \frac{1}{r}\partial G \Bigr)^{-1}\Bigl(\frac1r \,\cdot\, \Bigr),
\end{equation}
and since $G = \alpha \|\cdot\|_{1}$, the latter resolvent is given by
the soft-shrinkage operator $\mathcal{S}_{\alpha/r}(\,\cdot\,/r)$,
see \cite{HBPL, BS1,BS2}.
 This can in turn be expressed, for $p \in Y$, by
\begin{equation}
  \label{eq:resolvent-infty-constraints:vector}
  \mathcal{S}_{\alpha/r}(p/r) = \frac{\max(0, |p| - \alpha)}{r |p|} p
\end{equation}
where $|p| = \sqrt{(p_1)^2 + (p_2)^2}$.
Thus, preconditioned and relaxed ADMM algorithms can be written as in Table \ref{tab:precond-l2-tv}.
\begin{table}
  \centering
  \begin{tabular}{p{0.15\linewidth}r@{\ }p{0.65\linewidth}}
    \toprule
    \multicolumn{3}{l}{\textbf{rpADMM}\ \ \textbf{Objective:} $L^2$-TV denoising}
    \hfill%
    $\min_{u \in X}
    \ \frac12 \|u - f\|_2^2 + \alpha \| \nabla u \|_1$
    \hfill\mbox{}
    \\
    \midrule

    Initialization: &
    \multicolumn{2}{l}{%
      $(u^0, p^0, \bar \lambda^0) \in X \times Y \times Y$ initial guess,
      $r > 0$ step-size, $\rho_k \equiv 1.9$,}
    \\
    &
    \multicolumn{2}{l}{%
      $n \geq 1$ inner iterations for symmetric Gauss--Seidel}
    \\[\medskipamount]
    Iteration: &
    $u^{k+1}$ & $= \SRBGS_{1, r}^n\bigl(u^k, \Div (\bar \lambda^k - r p^{k}) + f
    \bigr)$ %\hfill
  %  according to~\eqref{neumann:boudary}
    \\[\smallskipamount]
    & $p^{k+1}$ & $=  \mathcal{S}_{\alpha/r}\bigl(
    (\bar \lambda^{k} + r \rho_{k}\nabla u^{k+1} + (1-\rho_{k}) p^{k})/r \bigr)$
    %\hfill according to~\eqref{eq:resolvent-infty-constraints:vector}
    \\[\smallskipamount]
    & $\bar \lambda^{k+1}$ &$ = \bar \lambda^{k} + r(\rho_{k}\nabla u^{k+1} - p^{k+1} + (1-\rho_{k}) p^{k})$
   % \hfill
    \\[\smallskipamount]
    \bottomrule
  \end{tabular}
  \caption{The overrelaxed and preconditioned ADMM iteration for $L^2$-TV
    denoising.}
  \label{tab:precond-l2-tv}
\end{table}
We use the primal-dual gap
\begin{equation}
  \label{eq:l2-tv-gap}
  \mathfrak{G}_{L^2\text{-}\TV}(u,\bar\lambda) = \frac{\|u-f\|_2^2}{2}
  + \alpha \|\nabla u\|_1 + \frac{\|\Div \bar \lambda + f\|_2^2}{2}
  - \frac{\|f\|_2^2}{2} + \mathcal{I}_{\{\|\bar \lambda\|_\infty \leq \alpha\}}(\bar \lambda)
\end{equation}
as the basis for comparison as well as a stopping criterion by
plugging the gap $  \mathfrak{G}_{L^2\text{-}\TV}(u^{k+1},\bar \lambda^{k+1})$ in each iteration step. Furthermore the boundedness of $G^*(\bar \lambda^k)=\mathcal{I}_{\{\|\bar \lambda\|_\infty \leq \alpha\}}(\bar \lambda^k)$ and the boundedness of the gap \eqref{eq:l2-tv-gap} could be guaranteed by the following remark, i.e., Remark \ref{rem:gap:legal}.
\begin{remark}\label{rem:gap:legal}
 While applying \eqref{pre:admm:sys:final} or \eqref{pre:admm:sys:referee:relax:final} to ROF model \eqref{eq:tv-denoising-min-prob}, since $G^*(\bar \lambda^k)=\mathcal{I}_{\{\|\bar \lambda\|_\infty \leq \alpha\}}(\bar \lambda^k)$ is a constraint on the Lagrangian multipliers $\bar \lambda$, then the Lagrangian multipliers $\bar \lambda_{1}^{k}$ always satisfy the constraint, i.e., $G^*(\bar \lambda_{1}^{k}) $ is finite.
% Since the by update of $\bar \lambda_{1}^{k} = \lambda_{1}^{k+1}$, as in \eqref{pre:right} and \eqref{pre:right:rela},
%\[
%\bar \lambda^{k}  = \lambda^{k+1} = J_{r A_{\tilde B}}(v^{k}) = (I + )
%\]
This means the primal-dual gap function as in \eqref{eq:l2-tv-gap} originally suggested in \cite{CP} for primal-dual algorithms is also bounded for \eqref{pre:admm:sys:final} or \eqref{pre:admm:sys:referee:relax:final}.
\end{remark}
\begin{proof}
Actually, by the analysis in section \ref{sec:admm-equavi-dr}, we see the update of $\bar \lambda_{1}^{k}$ as in \eqref{pre:admm:sys:final} or \eqref{pre:admm:sys:referee:relax:final} can also be equivalently written as the update of $\lambda_{1}^{k+1}$ in \eqref{pre:right} or \eqref{pre:right:rela} by applying to the dual problem \eqref{eq:dual}. With the data \eqref{eq:rof:data}, it can be checked that
\[
rA_{\tilde B}(\lambda_{1},  \lambda_{2}) = r \partial \tilde{G}^*(-\tilde B^* \cdot) + r \tilde c = (r \partial G^*(\lambda_{1}), r \partial I_{\{0\}}(\lambda_{2}) ).
\]
Thus, by the update formula of $\lambda_{1}^{k+1} $ in \eqref{pre:right} or \eqref{pre:right:rela}, we have
\[
\bar \lambda_{1}^{k} = \lambda_{1}^{k+1} = J_{rA_{\tilde B}}(v_{1}^{k})=  (I + r \partial G^*)^{-1}(v_{1}^{k}) = \mP_\alpha(v_{1}^k)  = \frac{v_{1}^k}{\max(1, |v_{1}^k|/\alpha)},
\]
and what follows is that $\lambda_{1}^{k+1}$ always satisfy the constraint, i.e., $G^*(\bar \lambda_{1}^{k})$ is always bounded. By the definition of the primal-dual gap \cite{CP}
\[
F(u^k) + G(Au^k) +  F^*(-A^*\bar \lambda_{1}^{k}) + G^*(\bar \lambda_{1}^{k}),
\]
which is also \eqref{eq:l2-tv-gap} for ROF model, we see the primal dual gap is bounded for \eqref{pre:admm:sys:final} or \eqref{pre:admm:sys:referee:relax:final} iterations. Thus this gap can be used as a stopping criterion for ROF model as introduced in \cite{CP}.
\end{proof}

\noindent\textbf{The case $q=1$.}
In this case, the discrepancy $\|\,\cdot\, - f\|_1$ is not
linear-quadratic and we have to reformulate~\eqref{eq:tv-denoising-min-prob} to
\[
\min_{u,v \in X, \  w \in Y} \ \|v - f\|_1 + \alpha \| w \|_1 \qquad
\text{subject to} \qquad
\begin{bmatrix}
  I \\ \nabla
\end{bmatrix}u =
\begin{bmatrix}
  v \\ w
\end{bmatrix}.
\]
This leads to $F = 0$ as in~\eqref{saddle:primal:original}. In
total, the problem \eqref{eq:tv-denoising-min-prob} could be reformulated to \eqref{saddle:primal:original} by introducing $p = (v,w)^{T}$ and the following data
\[
A =
\begin{bmatrix}
  I \\ \nabla
\end{bmatrix},
\quad
B =-I, \quad
c=0, \quad
F = 0, \quad
G(p) = G(v,w) = \|v - f\|_1 + \alpha \| w \|_1.
\]
Again, $rA^*A = r(I - \Delta) > 0$ for each $r > 0$, so preconditioned
ADMM is applicable. The resolvent $(rI + G)^{-1}$ then decouples into
\begin{equation}
  \label{eq:l1-tv-resolvent}
  (rI + \partial G)^{-1}(v,w) = \bigl(\mathcal{S}_{1/r}(v - rf) + f,
  \mathcal{S}_{\alpha/r}(w/r) \bigr)
\end{equation}
where the soft-shrinkage operator on $X$ is
involved which also reads as~\eqref{eq:resolvent-infty-constraints:vector} with
$|\cdot|$ denoting the usual (pointwise) absolute value.
Here, a preconditioner for $T = r(I - \Delta)$ has to be chosen.
 We use the normalized primal energy $R_k$ to realize a stopping criterion,
\begin{equation}\label{relative:energy}
 R_k := (E_k - E_*)/E_*,
 \qquad E_k =  \|u^{k}-f\|_{1} + \alpha \|\nabla u^{k}\|_{1},
\end{equation}
where $E_*$ represents the primal energy that was obtained by PADMM
after roughly $5 \times 10^5$ iterations and represents the minimal
value among all tested algorithms.

\noindent \textbf{The preconditioners.} Actually, for ROF model, efficient preconditioner is needed for the operator $T = I - r\Delta$, and for $L^1\text{-}\TV$ denoising model, efficient preconditioner is needed for the operator $T = r(I- \Delta)$. Both cases could be seen as the following general perturbed Laplace equation,
\begin{equation}\label{neumann:boudary}
  \left\{
    \begin{aligned}
      s u - r \Delta u &= b,\\
      \frac{\partial u}{\partial \nu}|_{\partial \Omega} &=  0.
    \end{aligned}
  \right.
\end{equation}%
%In particular, $T$ can be represented as the application of a
%five-point finite-difference stencil. In order to give a feasible
%preconditioner, the update step has to be symmetric, for this reason,
The efficient symmetric Red-Black Gauss--Seidel method (SRBGS) is used without solving \eqref{neumann:boudary} exactly or approximately. For more details, see \cite{BS2}.

The following four classes of algorithms are tested for the ROF model, and two symmetric Red-Black Gauss--Seidel iterations are employed as a preconditioner for pDRQ and pADMM/rpADMM
  for the corresponding subproblems \eqref{neumann:boudary},
\begin{itemize}

\item ALG2: $\mathcal{O}(1/k^2)$ accelerated primal-dual algorithm
  introduced in \cite{CP} with adaptive step sizes, $\tau_{0} = 1/L$,
  $\tau_{k} \sigma_{k} L^2 = 1$, $\gamma = 0.35$ and $L = 8$.
  Here we use the same notations for the parameters $\tau$, $\sigma$
  together with $\theta$ as in \cite{CP} throughout this section.

\item pDRQ: Preconditioned Douglas-Rachford method for pure
  quadratic-linear primal (or dual) functionals in \cite{BS} with step size $\sigma = 9$.

\item ADMM/rADMM: Alternating direction method of multipliers~\eqref{admm:original:sys:final} and its overrelaxed version \eqref{admm:relax:sys:final} both with step size $r=9$, and relaxation parameter $\rho_{k} \equiv  1.9$ for \eqref{admm:relax:sys:final}.
  In each iteration step, the linear subproblem~\eqref{neumann:boudary}
  is solved by diagonalization techniques via the \emph{discrete cosine
    transform} (DCT).
  % DCT is employed for linear subproblem  instead of preconditioners.
  %The notation $\text{ADMM}(r)$ denotes the step size
  %$r$.

\item pADMM/rpADMM: Preconditioned alternating direction method of
  multipliers according to Table~\ref{tab:precond-l2-tv} with step size $r = 9$ and relaxation parameter $\rho_{k} \equiv 1.9$.

  %The notation $\text{PADMM}(r, n)$ represents the step size
  %$r$ and $n$ iterations for the preconditioner.

  %The notation $\text{PDRQ}(r, n)$ represents the step size
  %$r$ and $n$ iterations for the preconditioner.
\end{itemize}
For the $L^1$-TV denoising
problem~\eqref{eq:tv-denoising-min-prob}, i.e., $q=1$, four classes of algorithms are also tested.
\begin{itemize}

\item ALG1: $\mathcal{O}(1/k)$ primal-dual algorithm introduced in
  \cite{CP} with constant step sizes, the dual step size $\tau = 0.02$,
  $\tau\sigma L^2 = 1$, $\theta = 1$ with $L = \sqrt{8}$
  as proposed in \cite{CP}.

\item ADMM/rADMM: Alternating direction method of multipliers~\eqref{admm:original:sys:final} or its overrelaxed version as in \eqref{admm:relax:sys:final}, both with step size $r = 20$, and the relaxation parameter $\rho_{k} \equiv  1.9$ for \eqref{admm:relax:sys:final}.
  Again, DCT is employed for linear subproblem \eqref{neumann:boudary} instead of preconditioners.
  % The notation $\text{ADMM}(r)$ represents the step size
  % $r$. Here $r = 16$.

\item pADMM/rpADMM: Preconditioned alternating direction method of
  multipliers as in Table \ref{tab:precond-l1-tv} with or without overrelaxation, both with step size $r = 20$, and the relaxation parameter $\rho_{k} \equiv  1.9$ for rpADMM.  Two symmetric Red-Black Gauss--Seidel iterations are
  employed as preconditioners for the linear subproblem \eqref{neumann:boudary}.
  % The notation $\text{PADMM}(r, n)$ represents the step size
  % $r$ and $n$ iterations for the preconditioner. Here $\text{PADMM}(20,2)$ is used.

\item fADMM/fpADMM: Here fADMM represents the overrelaxed ADMM of the type in \cite{FG, DY, LST} with relaxation parameter $\tau = 1.618$, which was developed by Fortin and Glowinski \cite{FG}. DCT is employed for the linear subproblem \eqref{neumann:boudary}, and the step size $r = 20$. fpADMM represents the overrelaxed ADMM of the same type in \cite{FG}, \cite{DY, LST} with the same step size and relaxation parameter $\tau$, along with preconditioning. Again,
    two symmetric Red-Black Gauss--Seidel iterations are employed as preconditioner for the
  corresponding subproblems \eqref{neumann:boudary}. In detail, for fpADMM, one just need to set all $\rho_{k}$ as $1.0$, and replace the update of $\bar\lambda^{k+1}$ in Table \ref{tab:precond-l1-tv} as follows
  \[
\bar\lambda_{v}^{k+1} = \bar \lambda_{v}^{k} + \tau r( u^{k+1} - v^{k+1}), \quad
   \bar \lambda_{w}^{k+1} = \bar \lambda_{w}^{k} + \tau r(\nabla u^{k+1}  - w^{k+1}).
  \]

  % The notation $\text{PDRQ}(r, n)$ represents the step size
  % $r$ and $n$ iterations for the preconditioner. Here $\text{PDRQ}(16, 1)$ is used.
\end{itemize}
\begin{table}
  \centering
  \begin{tabular}{p{0.15\linewidth}r@{\ }p{0.65\linewidth}}
    \toprule
    \multicolumn{3}{l}{\textbf{rpADMM}\ \ \textbf{Objective:} $L^1$-TV regularization}
    \hfill%
    $\min_{u \in X}
    \  \|u - f\|_{1} + \alpha \| \nabla u \|_1$
    \hfill\mbox{}
    \\
    \midrule

    Initialization: &
    \multicolumn{2}{l}{%
      $(u^0, v^0, w^{0}, \bar \lambda_{v}^0, \bar \lambda_{w}^{0}) \in X
      \times (X \times Y)^2$ initial guess, $r > 0$ step-size,}
    \\
    &
    \multicolumn{2}{l}{%
      $n \geq 1$ inner iterations for symmetric Gauss--Seidel, $\rho_k \equiv 1.9$,}
    % $M$: preconditioner for $r(L^*L -\Delta)$}
    \\[\medskipamount]
    Iteration: &
    $b^{k}$ & $= rv^k - \bar \lambda_v^k + \Div (\bar \lambda^k_w - r w^{k})$
    \\[\smallskipamount]
    & $u^{k+1}$ & $= \SRBGS_{r,r}^n(u^k, b^k)$
   % \hfill according to~\eqref{eq:symmetric-red-black-gauss-seidel}
    % $u^{k+1}$ & $= u^{k} + M^{-1}[L^*(rv^{k} - \lambda_{v}^{k}) - \Div(r w^{k} - \lambda_{w}^{k}) - r(L^*L -\Delta)u^{k}]$ %\hfill
   % according to~\eqref{eq:symmetric-red-black-gauss-seidel}
    \\[\smallskipamount]
    & $v^{k+1}$ & $= \mathcal{S}_{1/r}\bigl((\bar \lambda_{v}^{k} + r(\rho_{k}u^{k+1} +(1-\rho_k) v^{k}-f))/r
    \bigr) + f$
  %  \hfill according to~\eqref{eq:resolvent-infty-constraints:vector}
    \\[\smallskipamount]
    & $w^{k+1}$ &$ = \mathcal{S}_{\alpha/r}\bigl( (\bar \lambda_{w}^{k}
    + r (\rho_k \nabla u^{k+1} +(1-\rho_k)w^{k} )/r\bigr)$
   % \hfill according to~\eqref{eq:resolvent-infty-constraints:vector}
    \\[\smallskipamount]
   & $\bar \lambda_{v}^{k+1}$ &$=\bar \lambda_{v}^{k} + r(\rho_k u^{k+1} +(1-\rho_k) v^k- v^{k+1})$
   % \hfill
    \\[\smallskipamount]
   & $\bar \lambda_{w}^{k+1}$ &$= \bar \lambda_{w}^{k} + r(\rho_k \nabla u^{k+1} +(1-\rho_k)w^k - w^{k+1})$
   % \hfill
    \\[\smallskipamount]
    \bottomrule
  \end{tabular}
  \caption{The overrelaxed and preconditioned ADMM iteration for $L^1$-TV
    denoising.}
  \label{tab:precond-l1-tv}
\end{table}

\begin{table}%[!htb]
\centering % centering table
\begin{tabular}{lr@{\,}r@{\,}lr@{\,}r@{\,}lr@{\,}r@{\,}lr@{\,}r@{\,}l} % creating ten columns
\toprule
%\cmidrule{2-9}
& \multicolumn{6}{c}{$\alpha = 0.1$}
& \multicolumn{6}{c}{$\alpha = 0.3$}\\
\cmidrule{2-7} %inserting double-line
\cmidrule{8-13}
& \multicolumn{3}{c}{$\varepsilon = 10^{-5}$}
& \multicolumn{3}{c}{$\varepsilon = 10^{-7}$}
& \multicolumn{3}{c}{$\varepsilon = 10^{-5}$}
& \multicolumn{3}{c}{$\varepsilon = 10^{-7}$}\\
\cmidrule{1-13} % inserts single-line
ALG2 && 46 &(1.05s) && 214&(4.03s)&& 194 &(3.49s)&& 845 &(17.32s)\\
pDRQ && 41 & (0.83s) &&  135 & (2.68s) && 73 & (1.53s) &&  941 & (18.91s)\\
ADMM  && 40&(3.31s) &&128 & (10.29s) && 68 & (5.20s) && 915& (70.00s) \\
rADMM  && 23 & (1.93s) && 69 & (5.62s) && 39 & (3.12s) && 482 & (37.90s) \\
pADMM  && 41&(1.26s) && 134 & (4.65s) && 72 & (2.39s) &&919 &(29.09s) \\
rpADMM  && 25 & (0.93s) && 76 & (2.90s) && 48 & (1.57s) && 508 & (18.92s) \\
%rpDRQ && 25& (0.71s) && 76 & (1.95s) && 48 & (1.17s) &&  509 & (10.99s)\\
\bottomrule % inserts single-line
\end{tabular}
\caption{
Numerical results for the $L^2$-TV image denoising (ROF) problem \eqref{eq:tv-denoising-min-prob} with noise level $0.1$ and regularization parameters $\alpha = 0.1$, $\alpha = 0.3$.  For all algorithms, we use the two pairs $\{k(t)\}$ to represent the iteration number $k$, CPU time $t$. The iteration is performed until the normalized primal-dual gap~\eqref{eq:l2-tv-gap} is below $\varepsilon$.} %title of the table
\label{tab:peninsula:0.5}
\end{table}
\begin{figure}%[!htb]
%\graphicspath{{fig//}}
\begin{center}
\subfloat[Original image]
{\includegraphics[width=3.5cm]{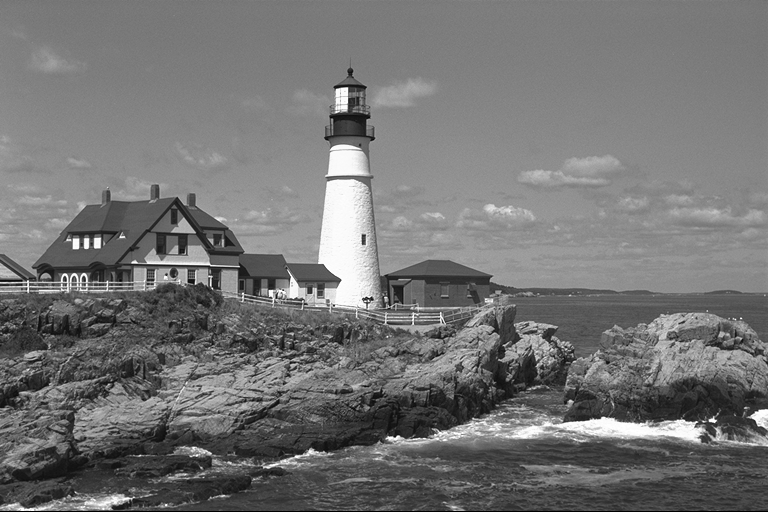}}\
\subfloat[Noisy image ($10\%$)]
{\includegraphics[width=3.5cm]{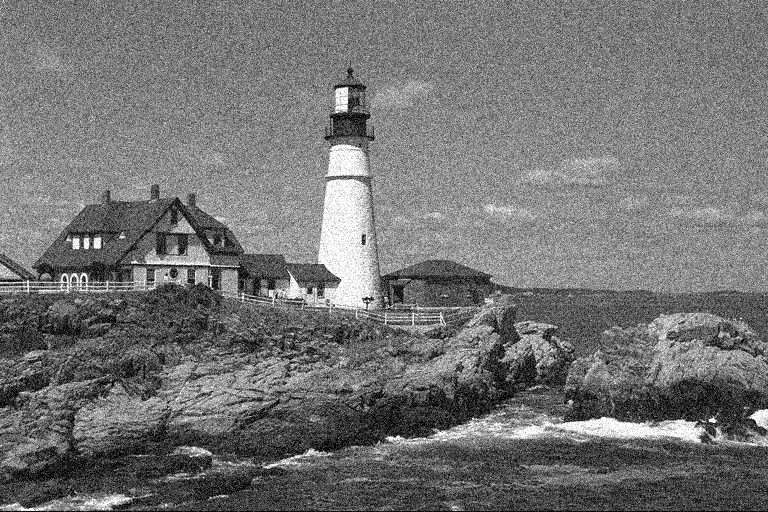}}\
%\subfloat[ALG2($10^{-7}$)]
%{\includegraphics[width=4.5cm]{}} \quad
\subfloat[rADMM($10^{-7}$)]
{\includegraphics[width=3.5cm]{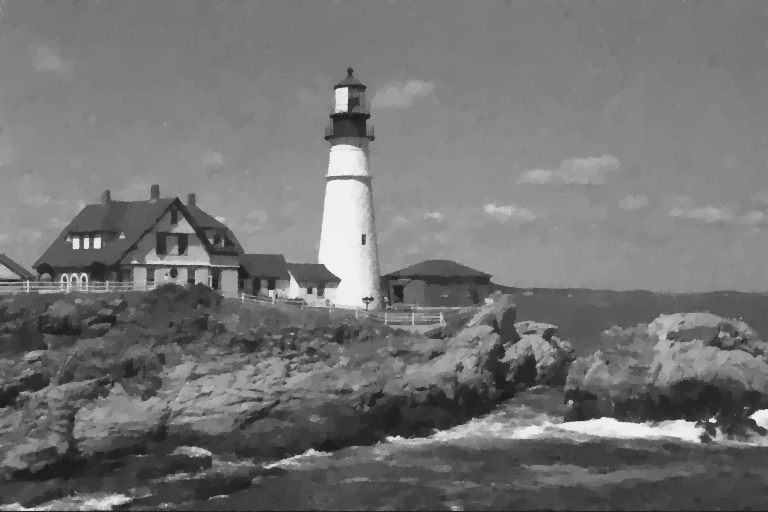}} \
%\subfloat[pDRQ($10^{-7}$)]
%{\includegraphics[width=4.5cm]{}} \quad
\subfloat[rpADMM($10^{-7}$)]
{\includegraphics[width=3.5cm]{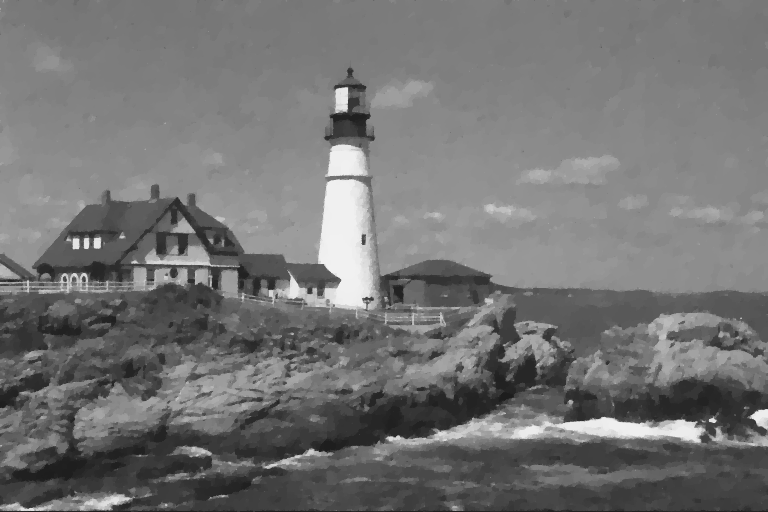}}
\end{center}
%\caption{Fronalpstock}
\caption{Image denoising with random Gaussian noise. (a) shows the $768\times 512$ input Peninsula image and (b) is a noisy image which has been corrupted by 10$\%$ random Gaussian noise. (c) and (d) are the denoised images by the rADMM and rpADMM algorithms with normalized primal-dual gap less than $10^{-7}$ for
  $\alpha = 0 .1$.}
\label{peninsula:l2}
\end{figure}

\begin{figure}[!htb]
  % \graphicspath{{fig//}}
  \begin{center}
    \subfloat[Numerical convergence rate of normalized primal-dual gap compared with iteration number.]
    % {\includegraphics[width=7cm]{fig/l2deblur-12-02-numbers.pdf}}\
    {\includegraphics[width=7.2cm,page=1]{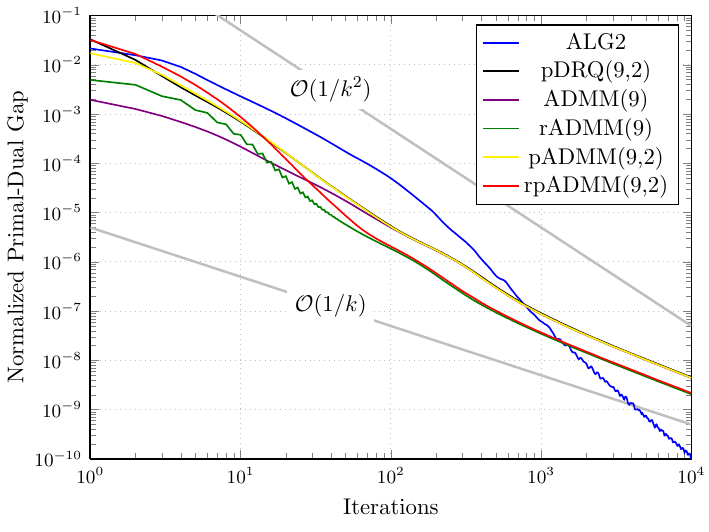}}\hfill
    \subfloat[Numerical convergence rate of normalized primal-dual gap
    compared with iteration time.] % Every algorithm runs for 300 seconds.]
    {\includegraphics[width=7.2cm,page=2]{figures}}
  \end{center}
  % \caption{Fronalpstock}
  \caption{$L^2$-$\TV$ denoising: Numerical convergence rates.
    The normalized primal-dual gap is compared in terms of iteration
    number and computation time for %the denoising of
    Figure~\ref{peninsula:l2}
    with  $\alpha = 0.3$.
    The notations ADMM($r$), rADMM($r$), pADMM($r,n$), rpADMM($r,n$) and pDRQ($r,n$) are used to
    indicate the step-size $r$ and $n$ inner iterations.
    Note the double-logarithmic and
    semi-logarithmic scale, respectively, which is used in the plots.}
    \label{peninsula:denoise:l2rate}
\end{figure}

\begin{table}
  \centering
  \begin{tabular}{l@{\,}lr@{\,}lr@{\,}lr@{\,}l} % creating eight columns
    \toprule
    \multicolumn{8}{c}{$\alpha = 1.0$} \\
    \cmidrule{3-8} %inserting double-line
    & & \multicolumn{2}{c}{$\varepsilon = 10^{-4}$}
    & \multicolumn{2}{c}{$\varepsilon = 10^{-5}$}
    & \multicolumn{2}{c}{$\varepsilon = 10^{-6}$}\\
    \cmidrule{1-8} % inserts single-line
    ALG1 & & 341&(4.67s) & 871&(11.55s)  & 3446&(34.13s)\\ % Entering row contents
     ADMM &  & 143& (9.82s) & 351&(22.66s) & 1371&(82.98s)\\
     rADMM &  & 104& (7.26s) & 199&(13.65s) & 725&(53.20s)\\
     fADMM &  & 115& (8.18s) & 265&(18.48s) & 869&(62.43s)\\
     pADMM &  & 168& (3.76s) & 397&(8.74s) & 1420&(27.80s)\\
    fpADMM &  & 140& (2.16s) & 339&(5.12s) & 938&(18.43s)\\
    rpADMM &  & 107& (1.97s) & 241&(4.06s)  & 776&(15.66s)\\
    %rpDRQ & (20,2) & 107&(1.46s) & 240&(3.24s)  & 776&(10.82s) \\
    \bottomrule % inserts single-line
  \end{tabular}
  \caption{$L^1$-$\TV$ denoising: Performance comparison. The results are
    shown in the format $\{k(t)\}$ where $k$ and $t$ denote iteration number
    and CPU time, respectively. The iteration is performed until the normalized primal energy $R_k$ is below $\varepsilon$.}
  \label{tab:shooter-l1denoise:0.6}
\end{table}

\begin{figure}[!htb]
%\graphicspath{{fig//}}
\begin{center}
\subfloat[Original image]
{\includegraphics[width=3.5cm]{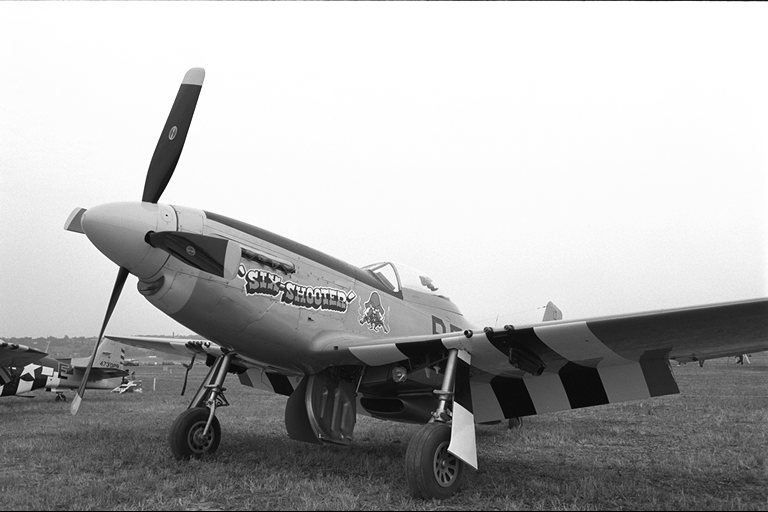}} \
\subfloat[Noisy image ($25\%$)]
{\includegraphics[width=3.5cm]{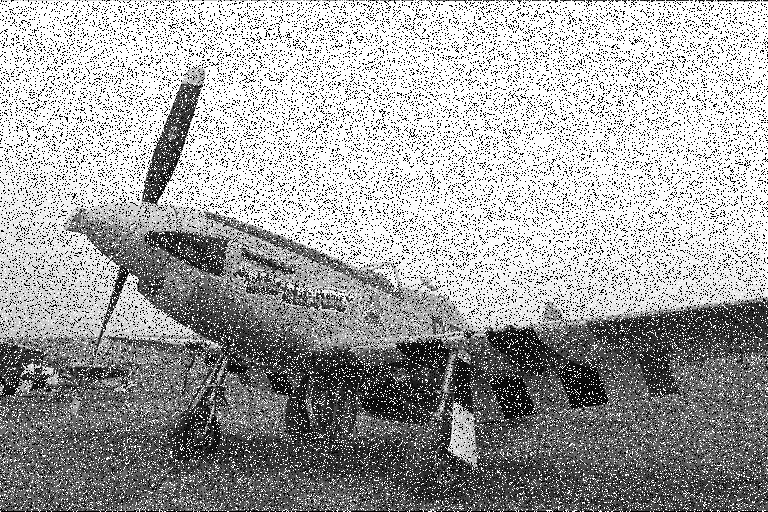}}\
%\subfloat[ALG1($10^{-6}$)]
%{\includegraphics[width=4.5cm]{}} \quad
\subfloat[rADMM($10^{-6}$)]
{\includegraphics[width=3.5cm]{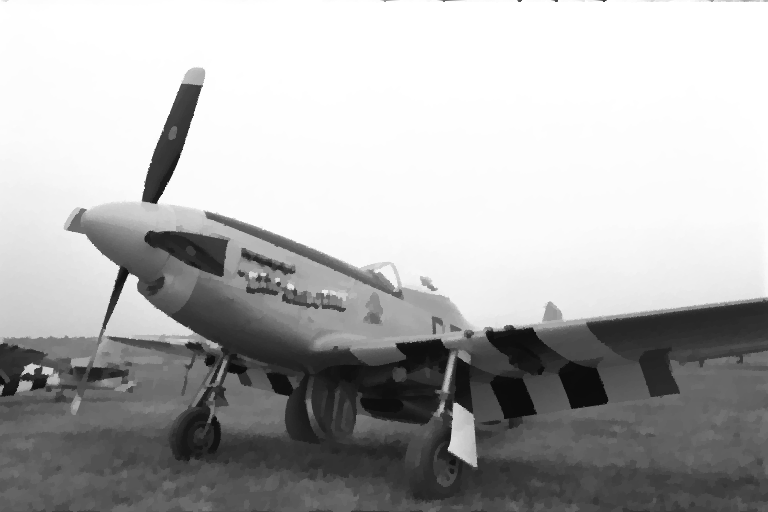}} \
%\subfloat[fpADMM($10^{-6}$)]
%{\includegraphics[width=4.5cm]{}} \quad
\subfloat[rpADMM($10^{-6}$)]
{\includegraphics[width=3.5cm]{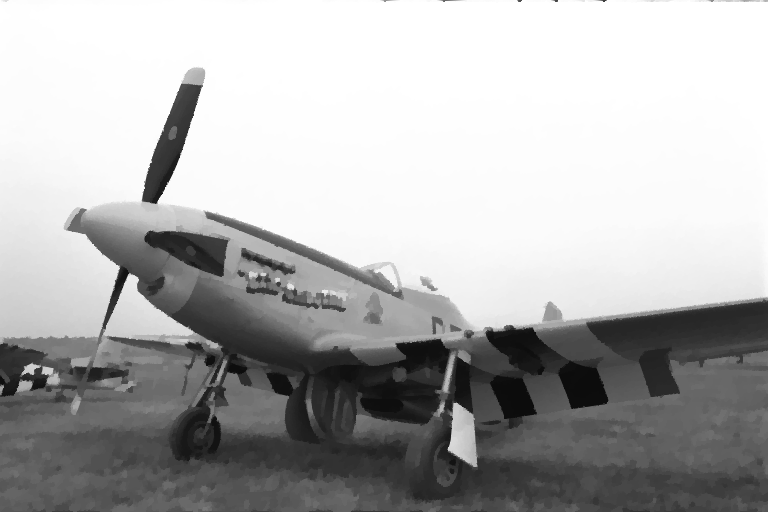}}
\end{center}
%\caption{Fronalpstock}
\caption{Image denoising with random salt and pepper noise. (a) shows the $768\times 512$ input Shooter image and (b) is a noisy image which has been corrupted by 25$\%$ salt and pepper noise. (c) and (d) are the denoised images by the rADMM and rpADMM algorithms with normalized primal energy less than $10^{-6}$ for $\alpha = 1.0$.}
\label{shooter:l1}
\end{figure}

\begin{figure}[!htb]
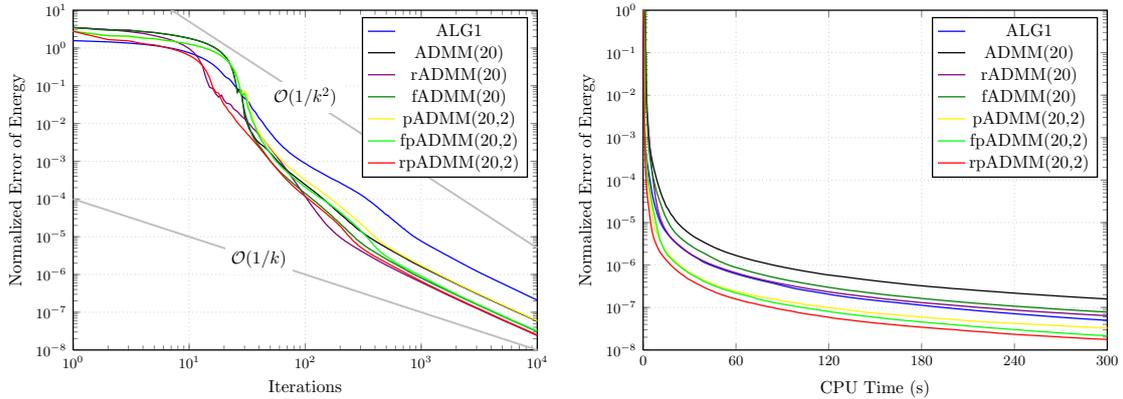

  % \graphicspath{{fig//}}
  \begin{center}
    \subfloat[Numerical convergence rate of normalized error of primal energy compared with iteration number.]
    % {\includegraphics[width=7cm]{fig/l2deblur-12-02-numbers.pdf}}\
    {\includegraphics[width=7.2cm,page=3]{figures}}\hfill
    \subfloat[Numerical convergence rate of normalized error of primal energy
    compared with iteration time.] % Every algorithm runs for 300 seconds.]
    {\includegraphics[width=7.2cm,page=4]{figures}}
  \end{center}
  % \caption{Fronalpstock}
  \caption{$L^1$-$\TV$ denoising: Numerical convergence rates.
    The normalized error of primal energy is compared in terms of iteration
    number and computation time for %the denoising of
    Figure~\ref{shooter:l1}
    with  $\alpha = 1.0$.
    The notations %ALG1($\tau$) indicates the dual step-size $\tau$ while
    ADMM($r$), rADMM($r$), pADMM($r,n$), rpADMM($r,n$), fADMM($r$), and fpADMM($r,n$) are used to
    indicate the step-size $r$ and $n$ inner iterations.
    Note the double-logarithmic and
    semi-logarithmic scale, respectively, which is used in the plots.}
   \label{lena:denoise:l1rate}
\end{figure}

\noindent\textbf{TGV denosing.} The TGV regularized $L^2$ deniosing model reads as follows \cite{BPK},
\begin{equation}
  \label{eq:tgv_primal}
  \min_{u \in X} \ \frac{\norm[2]{x - f}^2}{2} + \TGV_{\alpha}^{2}(x),
\end{equation}
where the TGV regularization could be written as follows,
\begin{equation}\label{tgv:bv}
  \TGV_{\alpha}^{2}(x) = \min_{w \in \BD(\Omega)} \ \alpha_{1} \| \nabla x -w\|_{\mathcal{M}} + \alpha_{0} \|\mathcal{E}w\|_{\mathcal{M}},
\end{equation}
where $\BD(\Omega)$ denotes the space of vector fields of
{\it Bounded Deformation}, and the weak symmetrized derivative $\mathcal{E}w
:= (\nabla w + \nabla w^T)/2$ being a matrix-valued Radon measure.
Moreover, $\|\cdot\|_{\mathcal{M}}$ denotes the Radon norm for the
corresponding vector-valued and matrix-valued Radon measures.
Denoting $u = (x,w)^{T}$, $p = (v,q)^{T}$, as in \eqref{saddle:primal:original}, we have the data
\begin{equation}
  \label{eq:tgv-denoising-pdr-data}
  A =
  \begin{bmatrix}
    \grad & -I \\
    0 & \symgrad
  \end{bmatrix},
  \
  B =   \begin{bmatrix}
    -I & 0 \\
    0 & -I
  \end{bmatrix}, \
  \ F(u) = F(x), \ \
  G(p) = \alpha_{1} \|v\|_{\mathcal{M}}
  + \alpha_0 \|q\|_{\mathcal{M}}, \  c=  \begin{bmatrix}
    0 \\
    0
  \end{bmatrix}.
\end{equation}

The following four classes algorithms are used for $L^2$-TGV tests.
We use one time symmetric Red-Black block Gauss-Seidel iteration \cite{BS2} for the corresponding linear subproblems with $(x,w)^{T}$. We choose the step size $r =3$ uniformly for pADMM, fpADMM, rpADMM and the relaxation parameter settings of fpADMM and rpADMM are the same as in TV case.
For more details for implementations and the preconditioners, see \cite{BS2}. The stopping criterion is the normalized primal energy of \eqref{eq:tgv_primal} where minimal energy was obtained
after roughly $2 \times 10^4$ iterations and represents the minimal
value among all tested algorithms.
The parameter settings of ALG1 are  as follows:
\begin{itemize}
\item ALG1: $\mathcal{O}(1/k)$ primal-dual algorithm
  introduced in \cite{CP1} with constant step sizes, dual step size $\tau = 0.05$,
  $\tau \sigma L^2 = 1$, $L = \sqrt{12}$.
\end{itemize}

\begin{figure}[!htb]
%\graphicspath{{fig//}}
\begin{center}
\subfloat[Original image]
{\includegraphics[width=3.5cm]{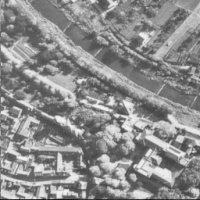}} \
\subfloat[Noisy image ($5\%$)]
{\includegraphics[width=3.5cm]{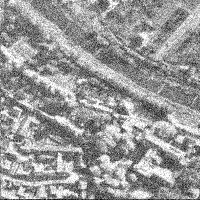}}\
%\subfloat[ALG1($10^{-6}$)]
%{\includegraphics[width=4.5cm]{fig/alg106shooter.png}} \quad
\subfloat[rpADMM($10^{-3}$)]
{\includegraphics[width=3.5cm]{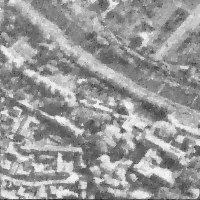}} \
%\subfloat[fpADMM($10^{-6}$)]
%{\includegraphics[width=4.5cm]{fig/fpadmm06shooter.png}} \quad
\subfloat[rpADMM($10^{-5}$)]
{\includegraphics[width=3.5cm]{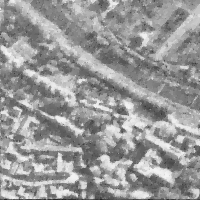}}
\end{center}
%\caption{Fronalpstock}
\caption{Image denoising with random salt and pepper noise. (a) shows the $200\times 200$ input Aerial image and (b) is a noisy image which has been corrupted by 5$\%$ gaussian noise. (c) and (d) are the denoised images by rpADMM algorithms with normalized primal energy $R_k$ less than $10^{-3}$ and $10^{-5}$ for $\alpha_0 = 0.1$, $\alpha_1 = 0.05$.}
\label{shooter:l1}
\end{figure}

\begin{table}%[!htb]
\centering % centering table
\begin{tabular}{lr@{\,}r@{\,}lr@{\,}r@{\,}lr@{\,}r@{\,}lr@{\,}r@{\,}l} % creating ten columns
\toprule
%\cmidrule{2-9}
& \multicolumn{6}{c}{$\alpha_1 = 0.05$, noise level: 5\%}
& \multicolumn{6}{c}{$\alpha_1 = 0.1$, noise level: 10\%}\\
\cmidrule{2-7} %inserting double-line
\cmidrule{8-13}
& \multicolumn{3}{c}{$\varepsilon = 10^{-3}$}
& \multicolumn{3}{c}{$\varepsilon = 10^{-5}$}
& \multicolumn{3}{c}{$\varepsilon = 10^{-3}$}
& \multicolumn{3}{c}{$\varepsilon = 10^{-5}$}\\
\cmidrule{1-13} % inserts single-line
ALG1&& 158&(0.97s)  && 3090&(21.53s) && 169&(1.24s) && 4201&(32.12s)\\
\midrule % inserts single-line
pADMM &&78&(0.94s) &&1820&(23.21s) &&83&(1.01s) && 2343&(30.80s)\\
fpADMM && 66&(0.82s)  && 1152&(14.62s) && 61&(0.68s) && 1470&(18.57s)\\
rpADMM && 51&(0.63s)  && 966&(13.06s) && 50&(0.60s) && 1250&(15.86s)\\
\bottomrule % inserts single-line
\end{tabular}

\vspace*{-0.5em}
\caption{  Numerical results for $L^2$-TGV image denoising problem
  \eqref{eq:tgv-denoising-pdr-data} with noise level 5\%, regularization parameters
  $\alpha_0 = 0.1$, $\alpha_1 = 0.05$, and noise level 10\%, regularization parameters
  $\alpha_0 = 0.2$, $\alpha_1 = 0.1$. For all algorithms, the pair
  $\{k(t)\}$ is used to represent the iteration number $k$ and time cost $t$.
  The iteration is performed until the normalized primal energy
  is below $\varepsilon$.}
\label{tab:l2tgv_denoising}
\end{table}

\begin{figure}[!htb]
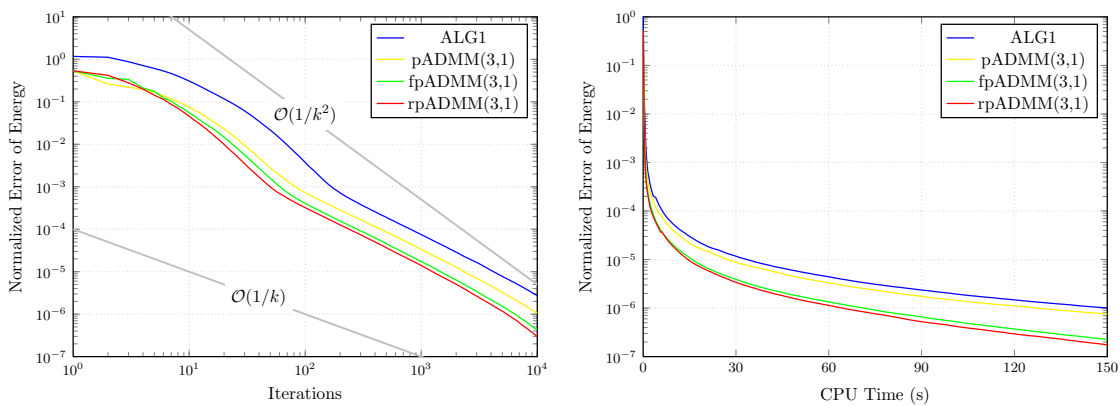

  % \graphicspath{{fig//}}
  \begin{center}
    \subfloat[Numerical convergence rate of normalized error of primal energy compared with iteration number.]
    % {\includegraphics[width=7cm]{fig/l2deblur-12-02-numbers.pdf}}\
    {\includegraphics[width=7.2cm,page=5]{figures}}\hfill
    \subfloat[Numerical convergence rate of normalized error of primal energy
    compared with iteration time.] % Every algorithm runs for 300 seconds.]
    {\includegraphics[width=7.2cm,page=6]{figures}}
  \end{center}
  % \caption{Fronalpstock}
  \caption{$L^2$-$\TGV$ denoising: Numerical convergence rates.
    The normalized error of primal energy is compared in terms of iteration
    number and computation time $\alpha_0 = 0.2$, $\alpha_1 = 0.1$.
    The notations %ALG1($\tau$) indicates the dual step-size $\tau$ while
    pADMM($r,n$), fpADMM($r,n$) and rpADMM($r$) are used to
    indicate the step-size $r$ and $n$ inner iterations.
    Note the double-logarithmic and
    semi-logarithmic scale, respectively, which is used in the plots.}
   \label{tgv:denoise:tim3rate}
\end{figure}

From Table \ref{tab:peninsula:0.5}, Table \ref{tab:shooter-l1denoise:0.6}, Table \ref{tab:l2tgv_denoising}, Figure \ref{peninsula:denoise:l2rate}, Figure \ref{lena:denoise:l1rate} and Figure \ref{tgv:denoise:tim3rate}, for both ROF, $L^1\text{-}\TV$ model and $L^2$-TGV model, it could be seen that overrelaxation with relaxation parameters setting $\rho_{k} \equiv 1.9$ could bring out certain acceleration, at least $30\%$ faster than $\rho_{k} \equiv 1.0$. Efficient preconditioner could bring out more efficiency, which is also observed in \cite{BS1} through a proximal point approach. Generally, the preconditioned ADMM with overrelaxation could bring out 3-5 times faster than the original ADMM without any preconditioning and relaxations. For $L^1\text{-}\TV$, it could be seen that, there is no big difference between rADMM and fADMM, or between rpADMM and fpADMM, which employ different relaxation strategies. Besides, rADMM or rpADMM could be slightly faster with fewer number of iterations in some cases.
\section{Conclusions}
%In this paper,
We mainly studied the weak convergence and ergodic convergence rate respecting to the partial prinal-dual gap of an overrelaxed and fully preconditioned ADMM method for general
non-smooth regularized problems. Besides, we could get certain accelerations by overrelaxation strategy together with preconditioning. We also discussed relationships between a kind of relaxed ADMM with preconditioning and the corresponding Douglas-Rachford splitting methods.
Applications for image
denoising are also presented. The numerical tests showed that overrelaxed and preconditioned ADMM has
the potential to bring out appealing benefits and fast algorithms, especially where the data fitting terms are not strongly convex.
Nevertheless, there is still some interesting questions for further investigations, for example, the accelerations as proposed in \cite{XU}, the convergence properties in functional spaces, especially the Banach spaces \cite{HRH}.
 %and designing efficient preconditioners for the involved linear equations of the dual variables in \cite{YBT}.
%The weak convergence is proved in infinite-dimensional Hilbert spaces with mild conditions in this framework. These could make the
%ADMM be more safe and efficient to use in applications with overrelaxations.
 %\todo{add Banach spaces case, Hintermueller}

%Nevertheless, there are some open questions and possible directions
%for further investigation. For instance, non-stationary iterative
%methods such as the CG (conjugate gradient) method are currently not
%covered by the theory and might be worth investigating.  The CG method
%has already been used in numerical experiments for linear subproblems
%in ADMM, e.g., for reconstruction in computed tomography (CT)
%\cite{Fess}.  Establishing convergence rates as well as accelerating
%preconditioned ADMM to give optimal rates is also a very important
%topic, see \cite{BS3,Xu} for recent work in this direction.  We
%believe that this framework will also be beneficial for applications
%beyond imaging, for instance, for compressed sensing problems and
%other large scale problems, provided that appropriate preconditioners
%could be found.

\section*{Acknowledgements}
{\small
The author is very grateful to Kristian Bredies, who suggested numerous improvements during preparing the draft of this article.
He also acknowledges the support of
Fundamental Research Funds for the Central Universities, and the
research funds of Renmin University of China (15XNLF20) and NSF of China under grant No. \,11701563.
}


\begin{thebibliography}{99}

\bibitem{AS} {H. Attouch, M. Soueycatt}, {\it Augmented Lagrangian and proximal alternating direction methods of multipliers in Hilbert spaces. Applications to games, PDE's and control}, Pacific.~J.~Optim., 5(1), pp. 17--37, 2008.

%\bibitem{BYT} {E. Bae, J. Yuan, X. Tai}, {\it Global minimization for continuous multiphase partitioning problems using a dual approach}, International journal of computer vision, 92(1), pp. 112-129.



\bibitem{HBPL}{H. H. Bauschke, P. L. Combettes}, \emph{Convex Analysis and
Monotone Operator Theory in Hilbert Spaces}, Springer Science+Business Media, LLC 2011, New York.


%\bibitem{BKSV}  {M. Benning, F. Knoll, C. -B. Sch\"{o}nlieb, T. Valkonen}, {\it Preconditioned ADMM with nonlinear operator constraint}, IFIP Advances in Information and Communication Technology TC7-2015 Proceedings, Springer, 10 p., 2016.

\bibitem{Boyd} {S. Boyd, N. Parikh, E. Chu, B. Peleato, J.
	Eckstein}, {\it Distributed optimization and statistical learning via the alternating
	direction method of multipliers}, Found. Trends Mach. Learn., 3(1), Jan, 2011, pp. 1--122.	

\bibitem{BPK} {K. Bredies, K. Kunisch, and T. Pock}, {\it Total Generalized Variation}, SIAM J. Imaging Sci., 2010, 3(3), pp. 492--526.


\bibitem{BS}  K. Bredies, H. Sun, \emph{Preconditioned Douglas-Rachford splitting methods for convex-concave saddle-point problems}, SIAM. J. Numer. Anal., 53(1), 2015, pp. 421--444.
\bibitem{BS1}  K. Bredies, H. Sun,  \emph{A proximal-point analysis of preconditioned
alternating direction method of multipliers}, J. Optim. Theory Appl., 173(3), 2017, pp. 878--907.

\bibitem{BS2}
{K. Bredies, H. Sun},
{\it Preconditioned Douglas--Rachford algorithms for TV- and
TGV-regularized variational imaging problems},
J. Math. Imaging Vision, 52(3), (2015), pp. 317--344.

\bibitem{BS3} {K. Bredies, H. Sun}, {\it Accelerated Douglas-Rachford methods for the solution of convex-concave saddle-point problems}, 2016, \url{http://arxiv.org/pdf/1604.06282v1.pdf}.


\bibitem{CP} A. Chambolle, T. Pock, {\it A first-order primal-dual algorithm for convex problems with applications to imaging}, J. Math. Imaging Vision,
40(1), 2011, pp. 120--145.
\bibitem{CP1} A. Chambolle, T. Pock, {\it Diagonal preconditioning for the first order primal-dual algorithms}, ICCV 2011, IEEE, pp. 1762--1769.
\bibitem{CP2} A. Chambolle, T. Pock, {\it On the ergodic convergence rates of a first-order primal-dual algorithm}, Math. Program., 159(1), 2016, pp. 253--287.
\bibitem{CP3} A. Chambolle, T. Pock, {\it An introduction to continuous optimization for imaging}, Acta Numerica, 25:161--319, 5, 2016.

\bibitem{SUNDE} L. Chen, D. Sun, K. C. Toh, {\it An efficient inexact symmetric Gauss-Seidel based majorized ADMM for high-dimensional convex composite conic programming}, Math. Program., 161(1-2), 2017, pp. 237--270.

\bibitem{COM} P. L. Combettes, {\it Iterative construction of the resolvent of a sum of maximal monotone operators}, J. Convex Anal., 16(4), 2009, pp. 727--748.

\bibitem{DDY} D. Davis, W. Yin, {\it Convergence rate analysis of several splitting schemes}, May 2015, arXiv:1406.4834v3 [math.OC].

\bibitem{DY}  W. Deng, W. Yin, \emph{On the global and linear convergence of the generalized alternating direction method of multipliers}, J. Sci. Comput., 66(3), 2016, pp. 889--916.

\bibitem{FBX} E. X. Fang, B. He,
H. Liu, X. Yuan, \emph{Generalized alternating direction method of multipliers:
new theoretical insights and applications}, Math. Prog. Comp. (2015), 7:149--187.


\bibitem{FG} {M. Fortin, R. Glowinski}, {\it On decomposition-coordination methods using an augmented Lagrangian}, in: M. Fortin
and R. Glowinski, eds., {\it Augmented Lagrangian Methods: Applications to the Solution of Boundary Value Problems}, North-Holland, Amsterdam, 1983.

\bibitem{DGBA} {D. Gabay}, {\it Applications of the method of multipliers to variational inequalities}, in: M. Fortin
and R. Glowinski, eds., {\it Augmented Lagrangian Methods: Applications to the Solution of Boundary Value Problems}, North-Holland, Amsterdam, 1983.

\bibitem{Gabay} {D. Gabay, B. Mercier}, {\it A dual algorithm for the solution of nonlinear variational problems
	via finite element approximation},  Comput.~Math.~Appl., 2(1), 1976, pp.17--40.

\bibitem{FM} {R. Glowinski, A. Marrocco}, {\it Sur l'approximation, par \'{e}l\'{e}ments finis d'ordre
un, et la r\'{e}solution, par p\'{e}nalisation-dualit\'{e} d'une
classe de probl\`{e}mes de Dirichlet non lin\'{e}aires},
Revue Fran\c{c}aise d'Automatique Informatique Recherche Op\'{e}rationnelle, Analyse num\'{e}rique, tome 9, no. 2 (1975), pp. 41-76.


\bibitem{GQY} {W. Guo, J. Qin, W. Yin}, {\it A new detail-preserving regularization scheme}, SIAM J. Imaging Sci.,
7(2), 2014, pp. 1309--1334.


\bibitem{EP} {J. Eckstein, D. P. Bertsekas}, {\it On the Douglas--Rachford splitting method and the proximal algorithm for maximal monotone operators}, Math. Program., 55, (1992), pp. 293--318.
%

%\bibitem{FPST} M. Fazel, T. Pong, D. Sun, and P. Tseng, {\it Hankel matrix rank minimization with
%applications to system identification and realization}, SIAM J. Matrix Anal. Appl., 34, pp. 946--977, 2013.

\bibitem{HY} {B. He, X. Yuan},  On the $O(1/n)$ convergence rate of the Douglas-Rachford alternating
direction method, SIAM. J. Numer. Anal., 50, 2012, pp. 700--709.

\bibitem{HRH} M. Hinterm\"{u}ller, C. N. Rautenberg, J. Hahn, Functional-analytic and numerical issues in splitting methods for total variation-based image reconstruction,
Inverse Problems 30(2014), 055014(34pp).

\bibitem{KK} {K. Ito, K. Kunisch}, {\it Lagrange Multiplier Approach to Variational Problems and Applications}, Advances in design and control 15, Philadelphia, SIAM, 2008.


\bibitem{LST} M. Li, D. Sun, K. C. Toh, \emph{A majorized ADMM with indefinite proximal terms for linearly constrained convex composite optimization}, SIAM J. Optim., 26(2), 2016, pp. 922--950.

\bibitem{MS} R. D. C. Monteiro, B. F. Svaiter, {\it Iteration-complexity of block-decomposition algorithms
and the alternating direction method of multipliers}, SIAM J.  Optim., 23, 2013, pp. 475--507.

\bibitem{NJ} H. Nien, J. A. Fessler, \emph{Relaxed linearized algorithms for faster X-ray CT image reconstruction}, IEEE Trans. Med. Imag., 35(4):1090-8, 2016.

\bibitem{UCJ} Y. Ouyang, Y. Chen, G. Lan, E. P. Jr, \emph{An accelerated linearized alternating direction method of multipliers}, SIAM J. Imaging Sci., 8(1), 2015, pp. 644-681.

\bibitem{SIMON} M. Reed, B. Simon, \emph{Methods of Modern Mathematical Physics: Functional analysis}, Vol. 1, Methods of Modern Mathematical Physics, Academic Press, 1980.

\bibitem{BFS} B. F. Saviter, \emph{On weak convergence of the Douglas-Rachford method}, SIAM J. Control Optim., 49(1), 2011, pp. 280--287.


\bibitem{SET} {S. Setzer}, {\it Split Bregman algorithm, Douglas-Rachford splitting and frame shrinkage},
Scale Space and Variational Methods in Computer Vision, Springer,  2009, pp. 464--476.


\bibitem{SM} R. Shefi, M. Teboulle, {\it Rate of convergence analysis of decmposition methods based on the proximal method of multipliers for convex minimization},
SIAM J. Optim., 24(1), 2014, pp. 269--297.


\bibitem{Tai} {C. Wu, X. Tai}, {\it  Augmented lagrangian method, dual methods, and split Bregman iteration for ROF, vectorial TV, and high order models}, SIAM J. Imaging Sci., 3(3), (2010), pp. 300--339.

\bibitem{XU} {Y. Xu}, {\it Accelerated first-order primal-dual proximal methods for linearly constrained composite convex programming}, SIAM J. Optim., 27(3), 2017, pp. 1459--1484.


\bibitem{YBT} {J. Yuan, E. Bae, X. Tai}, {\it A study on continuous max-flow and min-cut approaches},
Computer Vision and Pattern Recognition (CVPR), 2010, IEEE Conference on, pp. 2217-2224.

\bibitem{XMS} {X. Zhang, M. Burger, S. Osher}, {\it A unified primal-dual algorithm framework based on Bregman iteration}, J. Sci. Comput., Jan. 2011, 46(1), pp. 20--46.

\end{thebibliography}
  \end{document}